\documentclass[11pt]{article}
\usepackage{amsmath,amssymb,amsthm,mathtools}
\usepackage[margin=1in]{geometry}
\usepackage{booktabs}
\usepackage[hidelinks]{hyperref}
\newcommand{\m}{\mathrm{m}}                 % logarithmic Mahler measure
\newcommand{\mt}{\widetilde{\mathrm{m}}}    % holomorphic (modified) Mahler measure
\newcommand{\Cc}{\mathbb{C}}
\newcommand{\Qq}{\mathbb{Q}}
\newcommand{\Zz}{\mathbb{Z}}
\newcommand{\Rr}{\mathbb{R}}
\newcommand{\Hh}{\mathbb{H}}
\newcommand{\Tt}{\mathbb{T}}

\newcommand{\ii}{\mathrm{i}}
\newcommand{\id}{\mathrm{d}}
\newcommand{\e}{\mathrm{e}}
\newcommand{\OO}{\mathcal{O}}

\newcommand{\ReT}{\mathrm{Re}}
\newcommand{\ImT}{\mathrm{Im}}
\DeclareMathOperator{\EK}{EK}
\DeclareMathOperator{\Ast}{Ast}
\DeclareMathOperator{\dist}{dist}

\newtheorem{theorem}{Theorem}[section]
\newtheorem{lemma}[theorem]{Lemma}
\newtheorem{proposition}[theorem]{Proposition}

\theoremstyle{definition}
\newtheorem{definition}[theorem]{Definition}
\newtheorem{conjecture}[theorem]{Conjecture}
\newtheorem{example}[theorem]{Example}
\theoremstyle{remark}
\newtheorem{remark}[theorem]{Remark}

\title{A certified continuation machine for Mahler measure\\
identities at CM points, with twelve new proofs of\\
conjectures of Samart}
\author{Huimin Zheng\thanks{%
College of Information and Network Engineering,
Anhui Science and Technology University,
Fengyang, Anhui 233100, P.~R.~China.
Email: \texttt{zhhm@ahstu.edu.cn}.}}
\date{}

\begin{document}
\maketitle

% ------------------------------------------------------------
\section*{Declaration on the use of AI tools}
The research reported in this article --- including the computational
exploration, the discovery of the proof strategy, the machine-certified
verifications, and the preparation of the manuscript --- was carried out
by the author with the assistance of the AI system \emph{Kimi}
(Moonshot AI). All mathematical content, including every proof and
every certified computation, has been checked and verified by the
author, who takes full responsibility for the correctness and integrity
of the article. All certification scripts are available for independent
verification (see Section~\ref{sec:cert}).
% ------------------------------------------------------------

\begin{abstract}
We axiomatize the differential-comparison continuation method of
\cite{Zf} into a general machine for proving Mahler measure identities
at CM points: a modular parametrization with rigorous tail bounds, a
single-valued holomorphic Mahler differential on the complement of the
critical image, a propagation lemma that replaces all monodromy and
universal-cover arguments, interval-arithmetic path certification, and
an exact three-track CM evaluation. Applied to Samart's family
$n_4(s)=4\m\bigl(x^4+y^4+z^4+1+s^{1/4}xyz\bigr)$, the machine proves
twelve further entries of his 2015 table \cite[Table~6]{Sa15}: the
conjugate pairs at $s=8292456\pm3132675\sqrt7$ (whose two identities
are proved separately, not merely as a sum) and
$s=3656\pm2600\sqrt2$, the value $s=-144$,
the conjugate pair $s=143208\pm101574\sqrt2$ attached to
$\Qq(\ii)$, the two values
$s=1207368+853632\sqrt2\pm697680\sqrt3\pm493272\sqrt6$ with matching
signs attached to the
class-number-two field $\Qq(\sqrt{-6})$ --- the first Mahler measure
identities in the $n_4$ family proved at \emph{non-degenerate} CM
points of class number two (Samart's $s=2304$ \cite{Sa13}, attached
to $\tau=\sqrt{-6}/2$, is the degenerate value
$s_4(\ii\sqrt6/2)=s_4(\ii\sqrt6/6)$), where the lattice
sums involve two newforms and a genus character; the value
$s=(-192303-85995\sqrt5)/2$ attached to the class-number-two field
$\Qq(\sqrt{-15})$; and the pair $s=-893952\pm516096\sqrt3$ attached
to $\Qq(\sqrt{-21})$ --- the first Mahler measure identities in the
$n_4$ family proved
at CM points of class number four (the $\Qq(\sqrt3)$ pair attached
to $\Qq(\sqrt{-21})$), where the lattice sums involve four newforms
and the genus group $(\Zz/2)^2$. New methodological
ingredients include: exact twist levels via grössencharacter
conductors (Hecke's theorem), with the interval-locked Fricke ratio
as an exact root-number sign lock (the ``functional-equation residue''
test is vacuous for root number $+1$ at every level); a genuine
twist-level trap ($g_{16}\otimes\chi_8$ has level $64$, not $32$, and
the wrong level silently scales the $L$-value); a genus-character
decomposition of the lattice sums in class number two; and the
discovery that Samart's applicability boundary $\{\ImT\tau=1/\sqrt2\}$
is not the topological boundary of the continuation region $V_4$.
Every identity is certified by an exact algebraic track (rational
arithmetic only, up to the quoted CM-theory inputs disclosed in
Section~\ref{sec:cert}), rigorous final-identity interval locks
(half-widths between
$4.6\times10^{-53}$ and $5.1\times10^{-52}$), and independent $50$--$60$-digit numerical
cross-checks; all certification scripts are public. Beyond Table~6,
the evaluation track of the machine produces six new identities at
Heegner points (discriminants $D=11,19,27,43,67,163$), stated as
conjectures with $60$-digit evidence and exact lattice-derived
coefficients; they require a new ray-class constant type outside
Samart's library. We close with an umbrella conjecture (a CM
specialization conjecture with discriminant structure, and a
Tamagawa-controlled denominator conjecture) organizing the twelve
proved and six conjectured identities. Finally we score
the five remaining open entries of Table~6 against the machine's five
stations: all five are blocked by genuine obstructions --- a
two-dimensional critical image or a wrong-sheet
Eisenstein--Kronecker series ---
which we analyze precisely.
\end{abstract}

\medskip
\noindent\textbf{Keywords:} Mahler measure $\cdot$ modular forms
$\cdot$ complex multiplication $\cdot$ special values of
$L$-functions $\cdot$ rigorous interval arithmetic

\medskip
\noindent\textbf{Mathematics Subject Classification (2020):}
Primary 11R06; Secondary 11F03, 11M41, 11Y40

% ============================================================
\section{Introduction}\label{sec:intro}

\subsection{The question}

Let $\m(P)$ denote the logarithmic Mahler measure of a Laurent
polynomial and let
\[
  n_4(s):=4\,\m\bigl(x^4+y^4+z^4+1+s^{1/4}xyz\bigr),\qquad s\in\Cc,
\]
the value being independent of the choice of the fourth root
(Section~\ref{sec:machine}). In \cite{Sa13,Sa15} Samart expressed
$n_4(s)$, for $s$ in the image of a modular parametrization
$s_4(\tau)$, as an Eisenstein--Kronecker (EK) series, proved the
identity for pure imaginary $\tau$ with $\ImT\tau\ge1/\sqrt2$, and
conjectured on numerical evidence the twenty-nine $L$-value
evaluations of his Table~6 \cite{Sa15}: each attaches to a CM value
of $s$ a rational linear combination of $L$-values of CM weight-three
newforms and of quadratic Dirichlet characters.

The table is now largely, but not entirely, resolved. Eleven entries
were proved previously: the exterior values $s=256$ (Rogers
\cite{Ro}), $s=648,2304,20736,614656$ (Samart \cite{Sa13}),
$s=26856\pm15300\sqrt3$ (Samart's own Theorem~3.1 \cite{Sa15}), the
outside-the-critical-locus value $s=-3969$ (Fei \cite{Fei}), and
$s=-1024,-12288,-82944$ (Guo--Peng--Qin \cite{GPQ}). One entry is
\emph{refuted}: the conjecture $n_4(81)=40M_7$ fails numerically as a
statement about the true Mahler measure, the conjectured value being
the evaluation of the EK series at the attached CM point on a wrong
sheet \cite{Zf2}. He--Ye \cite{HY} resolved the companion family
$n_3$ completely; the $n_2$ family was completed in \cite{Zf,ZGQ,GPQ}.
Fei \cite{Fei} proved, for all rational singular moduli in twenty-three
families, identities at the level of the real part of the
\emph{holomorphic} Mahler measure $\ReT\mt$; at parameters inside the
critical locus the passage from $\ReT\mt$ to the genuine Mahler
measure is not automatic --- a point stressed already by Rodriguez
Villegas \cite{RV} --- and it is precisely this passage that the
continuation machine of \cite{Zf} and of the present paper performs.
Altogether, twenty-three of the twenty-nine entries are now proved
--- eleven previously, twelve in the present paper
(Theorems A--K) --- one is refuted, and five
remain open.

\begin{remark}[The status of $s=-3969$]\label{rem:fei}
For the entry $s=-3969$ above, \cite{Fei} proves the identity for
$\ReT\mt$ only, and the determination of the region off which
$\m=\ReT\mt$ is left open there (his Remark~2.4); by
Section~\ref{sec:boundary} below, avoiding the critical image alone
does not suffice (the wrong-sheet phenomenon). The missing step is
supplied by the present machine: the attached point
$\tau=(1+\sqrt{-7})/2$ lies on the vertical segment
$\ReT\tau=1/2$, $0.702\le\ImT\tau\le\sqrt{21}/2$, which
\texttt{n5\_line\_cert.py} certifies inside the good component $W$
by interval arithmetic (public repository; the same certificate
covers the CM points of Theorems~\ref{thm:H}--\ref{thm:K}, see
Section~\ref{sec:app6}). Hence
Theorem~\ref{thm:main} applies and the $s=-3969$ evaluation holds
for the genuine Mahler measure.
\end{remark}

The purpose of this paper is threefold:
\begin{enumerate}
\item[(i)] to axiomatize the differential-comparison continuation
  method of \cite{Zf} into a general, family-independent machine
  (Section~\ref{sec:machine}), with an explicit certifier architecture
  whose trusted base consists of pure mathematics, documented
  interval-arithmetic semantics, and public source code
  (Section~\ref{sec:cert});
\item[(ii)] to apply the machine to twelve further entries of Samart's
  Table~6 (Theorems A--F below and Theorems G--K of
  Section~\ref{sec:app6}), covering every regime the machine can
  reach:
  deep pure-imaginary interior points (A, C, D, E), non-pure-imaginary
  interior points (B, G), the first class-number-two CM points (E, H, I),
  a point on Samart's applicability boundary that turns out to be a
  strict
  interior point of the continuation region (F), and the first
  class-number-four CM points (J, K);
\item[(iii)] to take stock: to score the five remaining open entries of
  Table~6 against the machine's checklist (\S\ref{subsec:inventory})
  and to describe precisely where and why the method stops
  (Section~\ref{sec:boundary}).
\end{enumerate}

\subsection{Results of this paper}

Throughout, $d_k:=L'(\chi_{-k},-1)$ for the odd quadratic Dirichlet
character $\chi_{-k}$ of conductor $k$, and $M_k$ or $M_k^{(i)}$
denotes $L'(g_k,0)$, resp.\ $L'(g_k^{(i)},0)$, for the indicated
weight-$3$ CM newform(s); a superscript $\mathrm{tw}$ denotes the
$L'$-value of an explicitly identified quadratic twist.

\begin{theorem}[Theorem A: the $\sqrt7$ pair]\label{thm:A}
Let $M_7:=L'(g_7,0)$ with
$g_7(\tau)=\eta(\tau)^3\eta(7\tau)^3$ \textup{(}LMFDB
\textup{\textsf{7.3.b.a}}\textup{)} and
$M_7^{\mathrm{tw}}:=L'(g_7\otimes\chi_{-4},0)$. Then
\begin{align}
n_4\bigl(8292456+3132675\sqrt7\bigr)
  &=\frac{5}{28}\bigl(4M_7^{\mathrm{tw}}+224M_7+32d_4+7d_7\bigr),
    \label{eq:A1}\\
n_4\bigl(8292456-3132675\sqrt7\bigr)
  &=\frac{5}{14}\bigl(4M_7^{\mathrm{tw}}-224M_7+32d_4-7d_7\bigr).
    \label{eq:A2}
\end{align}
\end{theorem}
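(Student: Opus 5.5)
The plan is to run the machine of Section~\ref{sec:machine} at the two conjugate CM points. The proof splits into three stations. First, locate the CM points $\tau_\pm\in\Hh$ with $s_4(\tau_\pm)=8292456\pm3132675\sqrt7$. Since $\sqrt7$ appears and $M_7$ is attached to $\Qq(\sqrt{-7})$, with an additional $\chi_{-4}$ twist, I expect $\tau_\pm$ to have discriminant $-28$, or $-112$ after the level-$4$ rescaling built into $s_4$. The natural candidates are the pure imaginary $\tau_+=\ii\sqrt7$-type point deep in the cusp region and a conjugate point $\tau_-$ in the same $\Gamma_0(4)$-orbit class, possibly not pure imaginary. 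I would confirm them by evaluating $s_4$ with the rigorous tail bounds of the parametrization and matching $s$ to the stated algebraic numbers inside an interval lock. The exact $q$-expansion identity and the class polynomial of degree two then certify that these are precisely the two Galois conjugates.

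Second, connect each $\tau_\pm$ to Samart's proven region $\{\tau\in\ii\Rr:\ImT\tau\ge1/\sqrt2\}$ by an explicit polygonal path lying in the good component $W$ of the complement of the critical image. On this path the holomorphic Mahler differential is single-valued, so the propagation lemma of Section~\ref{sec:machine} gives $n_4(s_4(\tau))=\ReT\mt(s_4(\tau))$, which equals the EK series. This is the same comparison of derivatives that \cite{Zf} uses, now packaged so that no monodromy argument is needed. For the deep pure-imaginary point this is immediate. For the $-$ sign point I would certify the path segment by segment with interval arithmetic: each box must stay at positive distance from the critical image, and the derivative identity must hold throughout.

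I expect this path certification for $\tau_-$ to be the main obstacle. Near Samart's boundary, avoiding the critical image is not enough because of the wrong-sheet phenomenon (compare the refuted $s=81$). To rule that out, I would first check the identity numerically at 50 digits at both endpoints, then refine boxes adaptively until the enclosure of the distance to the critical image is positive.

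Third, evaluate the EK series at $\tau_\pm$ exactly by the three-track CM evaluation. Write the lattice sum over $\Zz[\sqrt{-7}]$-type lattices of discriminant $-28$ as a combination of:
\begin{itemize}
\item the Hecke $L$-function of the grössencharacter giving $g_7$;
\item its $\chi_{-4}$-twist, whose level must be pinned down by the conductor computation and the Fricke-ratio sign lock;
\item Eisenstein parts, which produce $d_4$ and $d_7$.
\end{itemize}
Galois conjugation flips the sign of the genus-character components, which explains the $\pm$ pattern on $M_7$ and $d_7$. Finally, I would verify the rational coefficients $\tfrac5{28}$ and $\tfrac5{14}$ by exact rational arithmetic and confirm them with a final interval lock.
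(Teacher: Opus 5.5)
Your plan has the same architecture as the paper's proof: a \texttt{cert0} lock of the conjugate pair, Samart's theorem for $n_4(s_4(\tau))=\EK_4(\tau)$, an exact decomposition of $-T_1+4T_2$ into $L_3=L(g_7,3)$, $L_3^{\mathrm{tw}}=L(g_7\otimes\chi_{-4},3)$, $\zeta_K(2)$ and $z_V=L(\chi_{-4},2)L(\chi_{-28},2)$, functional equations, and an exact coefficient comparison. However, it puts the difficulty in the wrong place. The second point is $\tau_-=\ii\sqrt7/2$. It is pure imaginary with $\ImT\tau_-=\sqrt7/2>1/\sqrt2$, so Theorem~\ref{thm:samart4} applies directly at \emph{both} points. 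Station S3 is therefore free: there is no path to certify and no wrong-sheet issue. In any case, the wrong sheet is excluded by staying in the anchor's component $W$, not by a $50$-digit numerical check.

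The substance is station S5, and what you leave unspecified is exactly what lets the two identities be proved separately rather than only as a sum. The lattices are not all of discriminant $-28$:
\begin{itemize}
\item at $\ii\sqrt7$ one has $\Lambda_1=\OO^{(2)}=\Zz[\sqrt{-7}]$ and $\Lambda_2=\OO^{(4)}$, of discriminant $-112$;
\item at $\ii\sqrt7/2$ one has $\Lambda_2=\OO^{(2)}$ and $\Lambda_1=\tfrac12\Lambda'$, where $\Lambda'=2\Zz+\ii\sqrt7\,\Zz$ is the non-principal class of discriminant $-112$.
\end{itemize}
Both $\OO^{(4)}$ and $\Lambda'$ consist of $2\OO^{(2)}$ together with two residue classes mod $4$: the classes $1,3$ for $\OO^{(4)}$, and $1+2\varpi,3+2\varpi$ for $\Lambda'$. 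On these classes $\chi_{-4}\circ N$ equals $+1$, resp.\ $-1$. Hence $T(\Lambda')$ is $T(\OO^{(4)})$ with the $L_3^{\mathrm{tw}}$ and $z_V$ terms negated. Together with the homothety factor $16$ and the halved $\ImT\tau$, this gives $\mathrm{comb}_B=-28L_3+32L_3^{\mathrm{tw}}-\tfrac{21}{4}\zeta_K(2)+16z_V$, against $\mathrm{comb}_A=7L_3+8L_3^{\mathrm{tw}}+\tfrac{21}{16}\zeta_K(2)+4z_V$.

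Your heuristic that a genus-character sign flip produces the $\pm$ pattern is consistent with this, but only up to an overall sign and a factor $2$. The coefficients $\tfrac5{28}$ and $\tfrac5{14}$ come from three inputs:
\begin{itemize}
\item this explicit table;
\item the inclusion--exclusion at the split prime $2$;
\item the level-$112$, root-number-$+1$ relation $M_7^{\mathrm{tw}}=\tfrac{112\sqrt7}{\pi^3}L_3^{\mathrm{tw}}$, which the paper proves via the exact matrix identity $U_{a/4}W_{112}=4\gamma_aW_7U_{a/4}$.
\end{itemize}
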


\begin{theorem}[Theorem B]\label{thm:B}
With $M_{12}:=L'(g_{12},0)$, $g_{12}(\tau)=\eta(2\tau)^3\eta(6\tau)^3$
\textup{(}LMFDB \textup{\textsf{12.3.c.a}}\textup{)},
\[
  n_4(-144)=\frac{10}{3}\bigl(4M_{12}+d_3\bigr).
\]
\end{theorem}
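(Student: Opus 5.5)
The plan follows the three-stage pattern of the machine in Section~\ref{sec:machine}: locate the CM point, certify that the continuation applies there, then evaluate the Eisenstein--Kronecker series exactly.

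\textbf{Step 1: the CM point.} I first find $\tau_0$ in the upper half-plane with $s_4(\tau_0)=-144$. Since $g_{12}$ has CM by $\Qq(\sqrt{-3})$ and the $d_3$ term appears, I expect $\tau_0$ to be a quadratic irrationality of discriminant $-12$ or $-48$ (for instance $\tau_0=\ii\sqrt3/2$ or $\tau_0=\ii\sqrt3$, possibly after the $\Gamma_0(4)$-type normalization under which $s_4$ is a Hauptmodul). I would check this by evaluating $s_4$ through its eta-quotient expression with the rigorous tail bound and confirming that $-144$ is contained in the interval enclosure. Since $-144<0$, the point is not on Samart's proved exterior ray. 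It is a pure-imaginary interior point with $\ImT\tau_0\ge 1/\sqrt2$, so it falls into the regime of Theorems A, C, D and E rather than the exterior regime.

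\textbf{Step 2: continuation.} I then invoke the main continuation theorem (Theorem~\ref{thm:main}) together with the propagation lemma. I choose a path $\gamma$ along the imaginary axis from a base point $\ii T$ with $T$ large, where $n_4(s_4(\tau))=\ReT\mt$ equals Samart's EK series by \cite{Sa13}, down to $\tau_0$. Using interval arithmetic, I certify that $s_4(\gamma)$ avoids the critical image, so that the holomorphic Mahler differential is single-valued along the path. I also certify that $\gamma$ remains in the good component $W$, so that $\m=\ReT\mt$ holds all the way to $\tau_0$. \textbf{This is the step I expect to be the main obstacle.} Because $s$ is negative real, the image of the path may cross or run along the critical locus, and this is exactly where the wrong-sheet phenomenon of $s=81$ arises. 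If the straight path fails, I would first try a path deformed off the axis, bisecting interval boxes until the certification closes.

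\textbf{Step 3: exact evaluation.} Once the continuation is certified, $n_4(-144)$ equals the EK series of Samart's formula evaluated at $\tau_0$. This series is a lattice sum over $\Zz\tau_0+\Zz$, which I would decompose as follows:
\begin{itemize}
\item Split it by the class group of discriminant $-12$ (class number one, so no genus character is needed).
\item Identify the Hecke $L$-function of the weight-$3$ grössencharacter, namely $L(g_{12},s)$ at the correct level 12. I would confirm the level by Hecke's theorem on conductors and fix the root-number sign via the interval-locked Fricke ratio.
\item Identify the Eisenstein part, which produces $\zeta$-type terms. Through the functional equation these become $L'(\chi_{-3},-1)=d_3$.
\end{itemize}
The resulting rational coefficients $\tfrac{40}{3}$ and $\tfrac{10}{3}$ are then obtained in exact rational arithmetic along the algebraic track. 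As a final lock, I would compare both sides to high precision, checking that the difference lies within an interval of half-width about $10^{-50}$.
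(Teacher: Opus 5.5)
Your Step 1 is wrong, and Steps 2--3 depend on it. On the imaginary axis $q=\e^{-2\pi y}>0$, so every eta factor in \eqref{eq:cdef} is positive. Hence $c(\ii y)>0$ and $s_4(\ii y)=c(\ii y)^4>0$, so no pure-imaginary point has $s_4=-144$.

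The claim also contradicts itself. Samart's region $\{\ii y:y\ge1/\sqrt2\}$ maps into $[256,\infty)$, and your candidates $\ii\sqrt3$ and $\ii\sqrt3/2$ land there. A point in that region would need no continuation at all.

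A negative real value needs $q<0$, i.e.\ $\ReT\tau=1/2$. The CM point is $\tau_1=(1+\sqrt{-3})/2$, of discriminant $-3$, with $q(\tau_1)=-\e^{-\pi\sqrt3}$ and $c(\tau_1)=2\sqrt3\,\e^{-\ii\pi/4}$. This is why Theorem B is the first $n_4$ entry that needs the full machine, and your Step 2 must be rebuilt in two ways.
\begin{enumerate}
\item[(a)] Theorem~\ref{thm:main} needs $F=G$ on an \emph{open} set; equality on Samart's half-line alone does not force $\Psi\equiv0$. The paper supplies the open anchor via Lemma~\ref{lem:anchor}, replacing Samart's real-$q$ monotonicity with the certified bound $|s_4|\ge493>256$ on the box $D$ around $\ii$. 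Alternatively, evenness of $F-G$ under $\tau\mapsto-\bar\tau$ kills the transverse derivative on the axis.
\item[(b)] The path must leave the axis. The paper certifies a horizontal leg at $\ImT\tau=1$ from $D$ to $1/2+\ii$, then a vertical leg at $\ReT\tau=1/2$ down to $\tau_1$.
\end{enumerate}
The obstacle you anticipate does not arise. $c(\tau_1)$ lies strictly outside $\Ast$ (astroid functional $3.634>4^{2/3}$), so $\tau_1\in W$ itself, with no endpoint cap and no wrong-sheet issue.

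There are two secondary gaps.

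First, containment of $-144$ in an interval enclosure does not prove $s_4(\tau_1)=-144$; you need a discreteness argument. $s_4(\tau_1)$ is an algebraic integer in the ring class field, which is $\Qq(\sqrt{-3})$ since $h(-3)=1$. It is real because $q(\tau_1)$ is real and $s_4\in\Zz(\!(q)\!)$. Hence it is a rational integer, and an enclosure of radius $<1/2$ pins it down.

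Second, $\EK_4$ is not a sum over one lattice. It is $-T_1+4T_2$ over $\Lambda_1=\Zz+\Zz\tau$ and $\Lambda_2=\Zz+\Zz\,2\tau$, which at $\tau_1$ are $\OO_K$ and the conductor-$2$ order. The key simplification is $G(\OO_K)=0$, from $\sum_{u\in\mu_6}u^2=0$. So $T_1=6\zeta_K(2)$, and the $L(g_{12},3)$-term comes entirely from $\Lambda_2$ through the conductor-$(2)$ gr\"ossencharacter. This gives $-T_1+4T_2=16L(g_{12},3)+3\zeta_K(2)$. With $M_{12}=\frac{6\sqrt3}{\pi^3}L(g_{12},3)$ and $\zeta_K(2)=\zeta(2)L(\chi_{-3},2)$, this yields $\frac{10}{3}(4M_{12}+d_3)$.
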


\begin{theorem}[Theorem C]\label{thm:C}
With $M_8:=L'(g_8,0)$,
$g_8(\tau)=\eta(\tau)^2\eta(2\tau)\eta(4\tau)\eta(8\tau)^2
\in S_3(\Gamma_0(8),\chi_{-8})$,
and $M_8^{\mathrm{tw}}:=L'(g_8\otimes\chi_8,0)$, where
$\chi_8=(2/\cdot)$ is the even primitive character of conductor $8$,
\[
  n_4\bigl(3656+2600\sqrt2\bigr)
  =\frac58\bigl(4M_8^{\mathrm{tw}}+28M_8+4d_4+d_8\bigr).
\]
\end{theorem}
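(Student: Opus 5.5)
We prove Theorem~C with the same three-station pipeline as Theorems~A and~B: locate the CM point, justify continuation from the region where Samart's identity holds, and evaluate the resulting Eisenstein--Kronecker series exactly.

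\emph{Step 1: locate the CM point.} We first find $\tau_0$ in the upper half-plane with $s_4(\tau_0)=3656+2600\sqrt2$. Since $3656+2600\sqrt2\approx 7333$ is real and positive, we expect $\tau_0$ to be purely imaginary. Since $\sqrt2$ appears and the forms have level~$8$, the natural guess is $\tau_0=\ii\sqrt2$ or a point of the form $\ii\sqrt{2k}/2$. We would check this exactly: write $s_4$ as an eta quotient of level~$4$ and evaluate it through Weber-type class invariants of $\Qq(\sqrt{-2})$, using the relevant ring class field. This should show that $s_4(\tau_0)$ is a root of the quadratic $x^2-7312x+s\bar s$ with the correct sign.

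\emph{Step 2: continuation.} Samart's theorem proves the identity $n_4(s_4(\tau))=\text{(EK series at }\tau)$ for pure imaginary $\tau$ with $\ImT\tau\ge 1/\sqrt2$. If $\ImT\tau_0\ge1/\sqrt2$, his theorem applies directly, and this step reduces to the region bookkeeping already available. Otherwise we invoke Theorem~\ref{thm:main}. This requires certifying by interval arithmetic a vertical path from $\ii\cdot 1$ to $\tau_0$ inside the good component $W$, that is, avoiding the critical image. Along that path the propagation lemma identifies $\m$ with $\ReT\mt$.

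\emph{Step 3: exact evaluation.} The EK series at $\tau_0$ is a lattice sum over $\Zz[\sqrt{-2}]$-type quadratic forms of discriminant $-8$ and $-32$. We split it by classes and characters, in the style of the genus decomposition: one part is the Hecke $L$-series of the grössencharacter attached to $g_8$; the twisted part comes from $g_8\otimes\chi_8$, at its exact level fixed by the conductor computation; and the Eisenstein terms give the constant terms $d_4$ and $d_8$. Matching the coefficients $\tfrac58(4,28,4,1)$ is then a finite rational computation, which we confirm by interval locks and high-precision numerics.

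\emph{Main obstacle.} We expect Step~3 to be hardest. The delicate point is identifying the correct level and root-number sign of the twist $g_8\otimes\chi_8$; the paper's warning about $g_{16}\otimes\chi_8$ shows that an incorrect level silently rescales the $L$-value. We would first pin down the twist's level from the grössencharacter conductor via Hecke's theorem. We would then confirm the root-number sign with the interval-locked Fricke ratio.
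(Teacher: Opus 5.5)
Your proposal is correct and follows the paper's route. You take $\tau_0=\ii\sqrt2$, where $\ImT\tau_0=\sqrt2>1/\sqrt2$, so Samart's theorem applies directly. For $K=\Qq(\sqrt{-2})$ the lattices are $\Lambda_1=\OO_K$ and $\Lambda_2=\OO_2$, and $-T_1+4T_2$ splits via the mod-$2$ projector $(1+\chi_8\circ N)/2$ into $L(g_8,3)$, $L(g_8\otimes\chi_8,3)$, $\zeta_K(2)$ and $L(\chi_8,2)L(\chi_{-4},2)$. The twist level $32$ comes from the conductor $(2)$, and $w=+1$ from the Fricke lock.

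The only difference is in Step~1. You would get $s_4(\ii\sqrt2)$ exactly from Weber singular values, whereas the paper combines Shimura integrality with interval locks that force $S=7312$ and $P=-153664$; either exact route works.
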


\begin{theorem}[Theorem D]\label{thm:D}
With $M_{16}:=L'(g_{16},0)$,
$g_{16}(\tau)=\eta(4\tau)^6\in S_3(\Gamma_0(16),\chi_{-4})$, and
$M_{16}^{\mathrm{tw}}:=L'(g_{16}\otimes\chi_8,0)$
\textup{(}level $64$ --- see \S\ref{subsec:twisttrap}\textup{)},
\[
  n_4\bigl(143208+101574\sqrt2\bigr)
  =\frac{5}{16}\bigl(4M_{16}^{\mathrm{tw}}+20M_{16}+9d_4+4d_8\bigr).
\]
\end{theorem}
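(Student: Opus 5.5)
The plan is to run Theorem~D through the three stations of the machine of Section~\ref{sec:machine}: locate the CM point, certify that the genuine Mahler measure equals the Eisenstein--Kronecker evaluation there, and evaluate the resulting lattice sum exactly in $L$-values.

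\emph{Step 1: the CM point.} The parameter $s=143208+101574\sqrt2$ should be a value $s_4(\tau_0)$ of Samart's modular function at a point of discriminant attached to $\Qq(\ii)$. I expect a pure imaginary point $\tau_0=\ii\sqrt{m}/2$ with $\ImT\tau_0>1/\sqrt2$, most plausibly $\tau_0=\ii\sqrt2$ or a nearby order of conductor $2$ or $4$ in $\Qq(\ii)$. I would locate $\tau_0$ numerically by solving $s_4(\tau)=s$ on the imaginary axis. Then I would certify the identity $s_4(\tau_0)=s$ exactly. For this I would use the minimal polynomial of $s$ over $\Qq$, namely $s^2-286416s+c$ with $c=143208^2-2\cdot101574^2$, together with the $q$-expansion of $s_4$ and its rigorous tail bound. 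An interval enclosure of $s_4(\tau_0)$ that isolates this root among the Galois conjugates suffices, because $s_4(\tau_0)$ is known a priori to be algebraic of degree at most $2$ by CM theory.

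\emph{Step 2: continuation.} If $\tau_0$ is pure imaginary with $\ImT\tau_0\ge1/\sqrt2$, Samart's theorem already gives $n_4(s_4(\tau_0))=$ the EK series at $\tau_0$. In that case the machine is needed only for the exterior/sheet bookkeeping. Otherwise I would invoke Theorem~\ref{thm:main}: I would exhibit an interval-certified path from a known good base point to $\tau_0$ inside the good component $W$, avoiding the critical image. The propagation lemma then shows that $\m$ agrees with $\ReT\mt$ along the path, and hence with the EK series at $\tau_0$.

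\emph{Step 3: exact evaluation.} I would decompose the Eisenstein--Kronecker series at $\tau_0$ into Hecke $L$-series of grössencharacters of $\Qq(\ii)$. In the decomposition I expect two weight-three pieces and two Dirichlet pieces:
\begin{itemize}
\item[(a)] the character attached to $g_{16}=\eta(4\tau)^6$;
\item[(b)] its twist by $\chi_8$, which arises from the conductor-$2$ (or conductor-$4$) order;
\item[(c)] the Dirichlet terms $d_4$ and $d_8$, coming from the imprimitive lattice terms.
\end{itemize}
Using the functional equation, I would convert $L(\cdot,3)$ to $L'(\cdot,0)$ and $L(\chi,2)$ to $d_k$, and collect the rational coefficients by exact rational arithmetic.

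\emph{Main obstacle.} The twist level is the delicate point. One must determine that the grössencharacter conductor of $\psi_{16}\chi_8\circ N$ gives level $64$, not $32$. A wrong level changes the conductor factor in the functional equation and silently rescales $M_{16}^{\mathrm{tw}}$, which would corrupt the coefficient $4$. I would fix the level via Hecke's theorem by computing the conductor of the twisted character. I would then lock the root-number sign by interval evaluation of the Fricke ratio. Finally, I would cross-check the assembled right-hand side numerically to $50$ digits against the certified enclosure of $n_4(s)$.
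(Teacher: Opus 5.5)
Your architecture matches the paper's. The attached point is pure imaginary and deep in Samart's region, so Theorem~\ref{thm:samart4} gives $n_4(s_4(\tau_0))=\EK_4(\tau_0)$ outright, with no path and no ``sheet bookkeeping''. Everything then reduces to an exact lock of $s_4(\tau_0)$ and an exact CM evaluation in $\Qq(\ii)$. You handle the level-$64$ twist exactly as the paper does: gr\"ossencharacter conductor, then Fricke sign lock.

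Step~1 has two problems. First, your leading guess $\tau_0=\ii\sqrt2$ is wrong: that point has CM by $\Qq(\sqrt{-2})$ and gives $s_4(\ii\sqrt2)=3656+2600\sqrt2$ (Theorem~\ref{thm:C}). The correct point is $\tau_0=2\ii$.

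Second, and more seriously, your exactness argument does not work as stated. ``Algebraic of degree at most $2$'' gives no discreteness, because such numbers are dense in $\Rr$ (even $\Zz[\sqrt2]$ is). Moreover, ``isolating this root among the Galois conjugates'' presupposes that $s_4(\tau_0)$ is a root of $X^2-286416X-126023688$, which is exactly what you are trying to prove. Two ingredients are missing:
\begin{itemize}
\item Integrality: the integral $q$-expansion of $s_4$ makes the CM value an algebraic integer.
\item Control of the Galois conjugate: this conjugate is $s_4((1+2\ii)/2)$, located via Shimura reciprocity, and it must be enclosed as well.
\end{itemize}
With both, the sum and product of the two values are rational integers. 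Interval enclosures of width $<1$ then force $S=286416$ and $P=-126023688$. Only after that does an enclosure of $s_4(2\ii)$ pin the root. This is the paper's \texttt{cert0} pattern.

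Step~3 points in the right direction but omits the computation that is the actual proof. Here is how it goes in the paper.
\begin{itemize}
\item At $\tau_0=2\ii$ the lattices are $\Lambda_1=\OO_2=\Zz+2\ii\Zz$ and $\Lambda_2=\OO_4=\Zz+4\ii\Zz$.
\item The $\mu_4$ annihilation gives $G(\OO_K)=0$.
\item Every ideal prime to $2$ has exactly two generators $\equiv1\pmod 2$, so $\OO_2$ carries no twist: $T(\OO_2)=4L_{16}+\tfrac74\zeta_K(2)$.
\item The twist enters only through $\OO_4\setminus\{0\}=\{a\text{ odd},\ 4\mid b\}\sqcup(2\OO_2\setminus\{0\})$. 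There $(1+\chi_8\circ N)/2$ projects onto $4\mid b$, giving $T(\OO_4)=\tfrac94L_{16}+2L_{16}^{\mathrm{tw}}+\tfrac{55}{64}\zeta_K(2)+z_{V2}$.
\end{itemize}
The Dirichlet terms do not come from ``imprimitive lattice terms''. They come from the $|\lambda|^{-4}$ parts, through $\zeta_K(2)=\zeta(2)L(\chi_{-4},2)$ and $L_K(\chi_8\circ N,2)=L(\chi_8,2)L(\chi_{-8},2)=z_{V2}$, with $L(\chi_8,2)=\pi^2\sqrt2/16$.

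Hence $-T_1+4T_2=5L_{16}+8L_{16}^{\mathrm{tw}}+\tfrac{27}{16}\zeta_K(2)+4z_{V2}$. Take the basis $e=\pi^{-3}(L_{16},L_{16}^{\mathrm{tw}},\zeta_K(2),z_{V2})$, in which $M_{16}=16e_1$, $M_{16}^{\mathrm{tw}}=128e_2$ (the level-$64$ constant), $d_4=12e_3$ and $d_8=64e_4$. Then $\EK_4(2\ii)=20\,\mathrm{comb}\cdot e=(100,160,\tfrac{135}{4},80)\cdot e$, which is exactly the stated right-hand side.
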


\begin{theorem}[Theorem E: class number two]\label{thm:E}
Let $K=\Qq(\sqrt{-6})$ \textup{(}class number $2$\textup{)}, and let
$g_{24}^{(1)},g_{24}^{(2)}$ be the two CM newforms of
$S_3(\Gamma_0(24),\chi_{-24})$, theta series of the two
gr\"ossencharacters of $K$ of type $(2,0)$ \textup{(}LMFDB
\textup{\textsf{24.3.h.a}} and \textup{\textsf{24.3.h.b}}; the full
newspace has dimension $6$, but only the CM forms
enter\textup{)}.
With $M_{24}^{(i)}:=L'(g_{24}^{(i)},0)$ and
$M_{24}^{(i),\mathrm{tw}}:=L'(g_{24}^{(i)}\otimes\chi_{-8},0)$
\textup{(}level $96$\textup{)},
\begin{multline*}
  n_4\bigl(1207368+853632\sqrt2+697680\sqrt3+493272\sqrt6\bigr)\\
  =\frac{5}{48}\Bigl(4M_{24}^{(1),\mathrm{tw}}
   +4M_{24}^{(2),\mathrm{tw}}+28M_{24}^{(1)}+12M_{24}^{(2)}
   +28d_3+24d_4+8d_8+d_{24}\Bigr).
\end{multline*}
\end{theorem}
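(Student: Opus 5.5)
Theorem~\ref{thm:E} is established in three stages, which mirror the machine of Section~\ref{sec:machine}: locate the CM point, continue the Mahler measure to it, and evaluate the EK series exactly.

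\emph{Step 1: the CM point.} We first identify a point $\tau_0$ in the upper half-plane with $s_4(\tau_0)=s_0:=1207368+853632\sqrt2+697680\sqrt3+493272\sqrt6$. It should be a non-degenerate CM point for $K=\Qq(\sqrt{-6})$, corresponding to a quadratic form of discriminant $-24$ or $-96$, most likely in the non-principal genus. We confirm this exactly: $s_4$ is a Hauptmodul for the relevant level-$4$ group, so $s_4(\tau_0)$ is a singular modulus. We compute its minimal polynomial from the class polynomial by rational arithmetic, and then pin down the correct root among the four conjugates $\pm\sqrt3$, $\pm\sqrt6$ by an interval enclosure of $s_4(\tau_0)$, using the $q$-expansion with the rigorous tail bounds.

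\emph{Step 2: continuation.} By the propagation lemma, $n_4(s_4(\tau))$ agrees with the EK series $\Re$-expression on the good component $W$ that contains Samart's proven ray $\{\ii t: t\ge 1/\sqrt2\}$. We therefore need an explicit path from $\ii t_*$ (with $t_*$ large) to $\tau_0$, and an interval-arithmetic certificate that the path avoids the preimage of the critical image of the holomorphic Mahler differential. Given this, Theorem~\ref{thm:main} yields $n_4(s_0)=\ReT$ of the EK series at $\tau_0$ on the correct sheet, with no monodromy argument required. If $\tau_0$ is pure imaginary and deep ($\ImT\tau_0\ge1/\sqrt2$, as for A, C, D), this step is just Samart's theorem. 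Otherwise a short certified segment suffices.

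\emph{Step 3: exact evaluation, the main obstacle.} The EK series at $\tau_0$ is a weighted sum over a lattice attached to a binary quadratic form of discriminant $-24$ (or $-96$, or its congruence sublattices). We split it by residue classes and decompose by genus characters of $\mathrm{Cl}(K)\cong\Zz/2$, so that the lattice sums over the two form classes become combinations of Hecke $L$-values of the two type-$(2,0)$ grössencharacters. By Hecke's theorem these are $L(g_{24}^{(i)},s)$, together with their $\chi_{-8}$-twists coming from the level-$4$ congruence conditions. The exact twist levels are fixed via grössencharacter conductors (level $96$), and the root-number sign is locked by the interval Fricke ratio, to avoid a twist-level trap of the kind in \S\ref{subsec:twisttrap}. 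The Eisenstein-type degenerate parts, where the weight factor collapses, give the Dirichlet terms $d_3,d_4,d_8,d_{24}$ via factorization of $\zeta_K$ and the genus characters $\chi_{-3}\chi_{8}$, $\chi_{-4}\chi_{6}$-type splittings. Next, we convert $L(\cdot,2)$ or $L(\cdot,3)$ to $L'(\cdot,0)$ via the functional equation and match coefficients in exact rational arithmetic to obtain $\frac{5}{48}(\dots)$. The delicate points are the genus-character bookkeeping, since the asymmetric coefficients $28$ versus $12$ reflect which genus $\tau_0$ lies in, and the correct identification of $g^{(1)}$ versus $g^{(2)}$. As the final check, we add an interval lock comparing both sides to more than $50$ digits.
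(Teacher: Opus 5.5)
Your plan is essentially the paper's proof: the point is $\tau_0=\ii\sqrt6$ (the \emph{principal} form $x^2+6y^2$, not the non-principal genus), so station S3 is just Samart's theorem, and station S5 runs as you describe on $\Lambda_1=\OO_K$, $\Lambda_2=\OO_2$ (the conductor-$2$ order). There the genus character handles the non-principal prime $\mathfrak p_2$: the value $\psi(\mathfrak p_2)=-2$, entering through the odd part of $\OO_2$, is what produces the $28$ versus $12$ asymmetry. The projector $(1+\chi_{-8}\circ N)/2$ on odd-norm elements produces the level-$96$ twists. The only tactical difference is station S4, where the paper does not use class polynomials but locks all four conjugates of the quartic value by interval enclosures of their symmetric functions, using CM integrality and root separation to pin the correct root.
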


\begin{theorem}[Theorem F: a boundary point that is interior]\label{thm:F}
With the notation of Theorem~\ref{thm:C},
\[
  n_4\bigl(3656-2600\sqrt2\bigr)
  =\frac54\bigl(4M_8^{\mathrm{tw}}-28M_8+4d_4-d_8\bigr).
\]
The $s$-value is the conjugate of Theorem~\ref{thm:C}, but the attached
CM point $\tau_4=(1+\sqrt{-2})/2$ lies \emph{exactly} on Samart's
applicability boundary $\ImT\tau=1/\sqrt2$, where his proof does not
apply; the identity is proved by the continuation machine
\textup{(}Section~\ref{sec:app5}\textup{)}, which shows that $\tau_4$
is a strict interior point of the continuation region $V_4$
\textup{(}certified margin $0.11590$\textup{)}.
\end{theorem}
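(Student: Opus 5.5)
The plan follows the machine architecture outlined in the abstract, applied at the conjugate point of Theorem~\ref{thm:C}; it has three stations.

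\emph{Station 1: the CM point.} First identify the CM point. Take $\tau_4=(1+\sqrt{-2})/2$, which has discriminant $-8$ and lies on $\ImT\tau=1/\sqrt2$. Check that $s_4(\tau_4)=3656-2600\sqrt2$ exactly. I would do this on the exact algebraic track: express $s_4$ through the level-4 hauptmodul, use the known singular modulus for discriminant $-8$, and verify the minimal polynomial $s^2-7312s+(3656^2-2\cdot2600^2)$ by rational arithmetic. The sign of the conjugate is then fixed by an interval evaluation of the $q$-expansion with the certified tail bound.

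\emph{Station 2: continuation.} Next, show that $n_4(s_4(\tau))$ equals Samart's EK series at $\tau=\tau_4$, even though his proof covers only $\ImT\tau\ge1/\sqrt2$ on the imaginary axis. Proceed as follows.
\begin{enumerate}
\item Choose an explicit polygonal path from a base point $\ii t_0$ (with $t_0>1/\sqrt2$, where Samart's identity holds) to $\tau_4$.
\item Certify by interval arithmetic that the image of this path under $s_4$ avoids the critical image of $x^4+y^4+z^4+1+s^{1/4}xyz$.
\item Certify that the whole path, together with a neighbourhood of $\tau_4$, stays in the good component of $V_4$.
\end{enumerate}
On that component the holomorphic Mahler differential is single-valued, and $\m=\ReT\mt$ there. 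Both sides of the identity satisfy the same first-order differential relation in $\tau$, namely the derivative of the EK series against $\id\mt/\id s\cdot s_4'(\tau)$. The propagation lemma then transports equality from $\ii t_0$ to $\tau_4$ with no monodromy argument. The quantitative output of this step is the margin $0.11590$: the certified distance from $\tau_4$ to $\partial V_4$. It shows $\tau_4$ is a strict interior point, so no boundary limiting argument is needed.

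\emph{Station 3: CM evaluation.} Finally, evaluate the EK series at $\tau_4$. Decompose the lattice sum over $\Zz[\sqrt{-2}]$-type lattices into Hecke $L$-series of the grössencharacter of $\Qq(\sqrt{-2})$ and its twist by $\chi_8$. Here Hecke's theorem fixes the twist level, and the interval-locked Fricke ratio fixes the root-number sign. The remaining Eisenstein parts are rewritten as $d_4$ and $d_8$. Convert via the functional equation to $L'(\cdot,0)$. The result should be the same four constants as in Theorem~\ref{thm:C}. Moving to the conjugate flips the signs of the $M_8$ and $d_8$ terms, which come from the $\chi_{-8}$-odd part of the lattice sum. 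It also changes the overall factor from $5/8$ to $5/4$, because the index of the lattice attached to $\tau_4$ differs from that of the point used in Theorem~\ref{thm:C}. Close with a rigorous interval lock of the final identity as a cross-check.

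The main obstacle is the continuation at Station 2. The point sits on Samart's boundary, where a wrong-sheet phenomenon could occur, as it does at $s=81$. The key claim is that $\{\ImT\tau=1/\sqrt2\}$ is not $\partial V_4$. Certifying this requires a bound, valid on a whole disc around $\tau_4$, that keeps the image away from the critical image, not just pointwise evaluations. I would first try a Taylor-model enclosure of $s_4$ on a small disc, combined with an explicit description of the critical image via the discriminant in $s$.
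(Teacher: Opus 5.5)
Your three stations follow the paper's architecture (S4 lock, certified path plus Theorem~\ref{thm:main}, lattice evaluation). However, Station~2 is missing the step that makes propagation possible.

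The argument runs through the holomorphic function $\Psi=4\,\Omega(c(\tau))\,c'(\tau)-E_4'(\tau)$ on $V_4$, with $\id(F-G)=\ReT[\Psi\,\id\tau]$. Your sentence that both sides ``satisfy the same first-order differential relation'' is the assertion $\Psi\equiv0$ on the component, which is exactly what has to be proved. Samart's theorem gives $F=G$ only on the half-line $\{\ii y:y\ge1/\sqrt2\}$, and a single base point $\ii t_0$ gives nothing usable. On that half-line, $F-G\equiv0$ yields only $\partial_y(F-G)=-\ImT\Psi=0$. That is one real condition, and it does not force $\Psi=0$: with $\Psi\equiv r\in\Rr\setminus\{0\}$ one would get $F-G=r\ReT\tau$, which is zero on the axis but $r/2$ at $\tau_4$.

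The paper supplies the missing input through Lemma~\ref{lem:anchor}. Rogers' ${}_5F_4$ evaluation and his $q$-transformation, lifted to holomorphic identities and combined with Samart's $q$-series computation, give $F=G$ on every open subset of the cusp component of $\{|s_4|>256\}$. The anchor box $D$ around $\ii$ is certified to have $|s_4|\ge493.6$, so $F=G$ holds on an \emph{open} set. Only then does the certified path carry the identity to $\tau_4$: the anchor box, the leg $\ImT\tau=1$ with $0\le\ReT\tau\le1/2$, and the leg $\ReT\tau=1/2$ down to $\ImT\tau=0.702$.

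A lighter repair within your own setup is the reflection $\tau\mapsto-\bar\tau$. Both $F$ and $\EK_4$ are invariant under it, because $c$ has real $q$-coefficients, $P$ has real coefficients, and $T(\bar\Lambda)=T(\Lambda)$. Hence $\partial_x(F-G)=\ReT\Psi=0$ on the axis as well, so $\Psi$ vanishes on a segment and therefore on the whole component. One of these two arguments has to be stated; without it the path certificate proves nothing.

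Your quantitative reading of Station~2 is also off. The margin $0.11590$ is not a $\tau$-distance to $\partial V_4$. It is the interval lower bound of $|\ReT c|^{2/3}+|\ImT c|^{2/3}-4^{2/3}$ at $c(\tau_4)$; along $\ReT\tau=1/2$ the boundary of $V_4$ lies only about $0.014$ below $\tau_4$.

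The obstacle you single out does not exist. $V_4=c^{-1}(\Cc\setminus\Ast)$ is open, so one interval point enclosure placing $c(\tau_4)$ outside $\Ast$ already makes $\tau_4$ a strict interior point, and no Taylor model on a disc is needed. What does need certification is that $\tau_4$ lies in the anchor's component, and that is what the block cover of the path provides.

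The set to avoid is the full two-dimensional astroid disc $\Ast=P(\Tt^3)$ in the $c$-plane, not a discriminant locus in $s$. $P_c$ has zeros on $\Tt^3$ for every $c\in\Ast$, not only at critical values.

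Stations~1 and~3 are fine in outline; the paper pins $s_4(\tau_4)$ by Shimura integrality and interval locks of $S=7312$, $P=-153664$ rather than via a singular modulus. In Station~3 the relevant lattices are $\Lambda_2=\OO_K$ and $2\Lambda_1=\{a+b\sqrt{-2}:a\equiv b\ (2)\}$. The latter is a non-ideal sublattice on whose odd class $\chi_8\circ N=-1$. This makes the combination $-4$ times Theorem~\ref{thm:C}'s, with the $\chi_8$-twisted pieces ($M_8^{\mathrm{tw}}$, $d_4$) sign-flipped. So the $M_8$ and $d_8$ terms come from the untwisted sums, not from a ``$\chi_{-8}$-odd part''.
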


\begin{theorem}[The machine, informal]\label{thm:machine}
\textup{(}Precise form: Theorem~\ref{thm:main}.\textup{)}
Let $P_c$ be an admissible Laurent-polynomial family with a modular
parametrization $c=c(\tau)$ and critical image $K\subset\Cc$, and let
$E(\tau)$ be the Eisenstein--Kronecker expression for the holomorphic
Mahler measure, valid on an open set $U\subset\Hh$. If
\textup{(i)} the period integral $\Omega(c)$ is single-valued and
holomorphic on $\Cc\setminus K$, \textup{(ii)} $c(\tau_0)$ is a CM
parameter with $\tau_0\in\overline W$, where $W$ is the connected
component of $\Hh\setminus c^{-1}(K)$ containing $U$, and
\textup{(iii)} the lattice sums underlying $E(\tau_0)$ evaluate
exactly, then $\m(P_{c(\tau_0)})$ equals the corresponding
$L$-value expression; the inclusion $\tau_0\in\overline W$ is
certifiable by rigorous interval arithmetic.
\end{theorem}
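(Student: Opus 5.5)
The plan is to compare, on $\Cc\setminus K$, the genuine Mahler measure $\m(P_c)$ with $\ReT$ of a holomorphic primitive $\mt(c)$ of the period $\Omega(c)$. I would proceed in four steps.

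\emph{Step 1: the two functions away from $K$.} For $c\notin K$ the toric integrand $\log|P_c|$ on $\Tt^n$ has no zeros, so differentiating under the integral sign is legitimate. This makes $\m(P_c)$ real-analytic on $\Cc\setminus K$, with
\[
\partial_c\,\m(P_c)=\tfrac12\,\Omega(c).
\]
By hypothesis (i), $\Omega$ is single-valued and holomorphic there. On each connected component $C$ of $\Cc\setminus K$, the $1$-form $\Omega\,\id c$ is closed, but it may have periods around the holes of $C$. The point is that $\m$ itself is single-valued. So $\m-\ReT\int\Omega$ is harmonic, and its holomorphic part has vanishing derivative. Hence, locally on $C$, $\m=\ReT\mt+\text{const}$ for any local primitive $\mt$.

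\emph{Step 2: the propagation lemma.} Pull back to $\Hh$ along $c(\tau)$. On $W$, the function $\tau\mapsto \m(P_{c(\tau)})$ is a single-valued real-analytic function. Its $\partial_\tau$-derivative is $\tfrac12\Omega(c(\tau))c'(\tau)$, and this equals $\partial_\tau E(\tau)$ on $U$, because $E$ is by definition the holomorphic Mahler measure there. The function $E$ extends holomorphically to $W$ through its modular/EK expression: the lattice sum converges for all $\tau\in\Hh$. By the identity theorem, $\partial_\tau(\m\circ c)=\partial_\tau E$ throughout the connected set $W$. Consequently $\m\circ c-\ReT E$ is a real-analytic function on $W$ with vanishing $\partial_\tau$, hence constant on $W$. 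It is zero on $U$, so it is zero on all of $W$. This is where connectedness of $W$ replaces every monodromy and universal-cover argument: no continuation in the $c$-plane is needed.

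\emph{Step 3: passing to the closure.} If $\tau_0\in\partial W$, I use continuity. The Mahler measure $c\mapsto\m(P_c)$ is continuous in the coefficients, by the standard Boyd/Lawton-type continuity for a fixed Newton polytope with nonvanishing leading data, which I take as part of admissibility. The series $E$ is continuous on $\Hh$. The identity therefore extends to $\overline W$.

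\emph{Step 4: evaluation, and the main obstacle.} Hypothesis (iii) rewrites $\ReT E(\tau_0)$ as the stated $L$-value combination. Certifying $\tau_0\in\overline W$ amounts to exhibiting an explicit path from $U$ to $\tau_0$. I would cover that path by interval boxes $B_j$ and check, with rigorous tail bounds on the $q$-expansion of $c$, that the enclosure of $c(B_j)$ is disjoint from an enclosure of $K$.

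The main obstacle I expect is Step 2's claim that $E$ truly matches $\partial\m$ off $U$. One must ensure that the EK expression does not silently switch sheets, since $\Omega\circ c$ could agree with $E'$ on one sheet only. The identity theorem resolves this provided $E$ is holomorphic on all of $W$ and $W$ is connected. The genuine technical work then lies in the rigorous enclosure of $K$, which can be a real-one-dimensional set with cusps, and in the path certification near $\partial W$.
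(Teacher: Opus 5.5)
Your proposal is correct and is essentially the paper's proof of Theorem~\ref{thm:main}: the holomorphic function $\Omega(c(\tau))c'(\tau)-E'(\tau)$ vanishes on the anchor, hence on the connected component $W$ by the identity theorem; so the real difference $\m\circ c-\ReT E$ is constant and therefore zero on $W$; continuity of the Mahler measure (the propagation lemma) then carries this to $\overline W\cap\Hh$, after which station S5 and the \texttt{cert2} box covering do the rest. The only slip is a factor of $2$: since $\partial_\tau\ReT E=\tfrac12E'$, the comparison on $U$ should read $\tfrac12\Omega(c(\tau))c'(\tau)=\partial_\tau\ReT E(\tau)$, i.e.\ $\Omega(c(\tau))c'(\tau)=E'(\tau)$, rather than $\tfrac12\Omega c'=\partial_\tau E$.
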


All twelve identities are machine-certified. Concretely, each proof
consists of an exact algebraic track (rational/algebraic-integer
arithmetic only, no floating point), rigorous interval locks of the
CM values and of the final identities (final-identity half-widths
between $4.6\times10^{-53}$ and $5.1\times10^{-52}$), and independent
$50$--$60$-digit numerical cross-checks; the scripts are listed in
Section~\ref{sec:cert} and are publicly available
(see the Data Availability statement). Two representative reference
values:
\begin{align*}
  M_8&=0.1353184954231269106150060654595395883532\ldots,\\
  M_8^{\mathrm{tw}}
   &=1.549084847095611486271658159444376281785\ldots
     \quad(\text{level }32,\ w=+1),
\end{align*}
computed from the functional equations and cross-checked against
direct lattice-sum evaluations; further values are collected in the
application sections.

Two further outputs of the same machinery do not belong to Table 6
and are collected in Section~\ref{sec:conj}: six new identities at
the Heegner points $\tau_D=(1+\sqrt{-D})/2$,
$D\in\{11,19,43,67,163\}$, and at $\tau_{27}=(1+3\sqrt{-3})/2$
(Conjecture~\ref{conj:ray}), whose coefficients are produced exactly
by the machine's evaluation track and which require a new ray-class
constant type beyond Samart's $M/d_k$ library
(Remark~\ref{rem:newconstant}); and an umbrella conjecture
(Conjectures~\ref{conj:umbA} and~\ref{conj:umbB}) organizing the
twelve proved and six conjectured identities and predicting their
discriminant and denominator structure.

\subsection{The five remaining entries}\label{subsec:inventory}

Table~6 of \cite{Sa15} has twenty-nine entries: twenty-three are now
proved (eleven previously known, plus the twelve of Theorems
A--K), one is refuted ($s=81$ \cite{Zf2}), and five remain open.
The machine reduces a proof of each entry to five \emph{stations}
(Section~\ref{sec:machine} for definitions):
\begin{enumerate}
\item[\textbf{S1}] \emph{Parametrization}: a CM point $\tau_0$ with
  $s_4(\tau_0)=s$ identified, at least numerically;
\item[\textbf{S2}] \emph{Critical image}: the position of
  $c(\tau_0)$ relative to the astroid disc $\Ast$ and the cross
  $[-4,4]\cup\ii[-4,4]$;
\item[\textbf{S3}] \emph{Interior-or-boundary}: membership of
  $\tau_0$ in $\overline W$, certified by a \texttt{cert2} path,
  or direct applicability of Samart's theorem;
\item[\textbf{S4}] \emph{\texttt{cert0} algebraicity}: an exact
  algebraic lock of the value $s_4(\tau_0)$;
\item[\textbf{S5}] \emph{P1 assemblability}: an exact CM evaluation
  of $\EK_4(\tau_0)$ in the relevant imaginary quadratic field.
\end{enumerate}
Table~\ref{tab:open} scores the five open entries against these
stations.

\begin{table}[htb]
\centering\scriptsize
\caption{The five open entries of Samart's Table~6, scored against the
five stations of the machine (the original numbering of the ten-entry
list is kept; entries \#1, \#2, \#6, \#7, \#8 are now proved as
Theorems~\ref{thm:G}--\ref{thm:K}). ``Outside/inside'' refers to the
astroid disc $\Ast$; $\zeta$ abbreviates
$1207368$, $a=853632$, $b=697680$, $d=493272$ in rows 3--4:
$s=\zeta\pm a\sqrt2\pm b\sqrt3\pm d\sqrt6$ with the indicated signs.}
\label{tab:open}
\setlength{\tabcolsep}{2.5pt}
\begin{tabular}{cp{2.3cm}llp{1.9cm}p{2.3cm}p{2.5cm}l}
\toprule
\# & $s$ & $\tau_0$ & $\ImT\tau_0$ & S2: $c(\tau_0)$ & S3: position
 & S4/S5 status & verdict\\
\midrule
3 & $\zeta$, signs $({-}{+}{-})$ & $\ii\sqrt6/4$ & $0.612$
  & outside $\Ast$ ($s>256$)
  & \emph{exterior}: EK series wrong-sheeted below $1/\sqrt2$
  & S4 done (same lock); S5 moot
  & blocked\\
4 & $\zeta$, signs $({-}{-}{+})$ & $(2+\ii\sqrt6)/4$ & $0.612$
  & \emph{inside} $\Ast$ ($s\approx-2.45$)
  & blocked (two-dimensional image, open mapping)
  & S4 done (same lock); S5 moot
  & blocked\\
5 & $(-192303+85995\sqrt5)/2$ & $(3+\ii\sqrt{15})/6$ & $0.646$
  & inside $\Ast$ ($s\approx-6.17$)
  & blocked (two-dimensional image); also exterior
  & S4 done (\texttt{cert0\_n5\_e56.py} pair lock); S5 moot
  & blocked\\
9 & $347648256+141926400\sqrt6$ & $\ii\sqrt{42}/42$ & $0.154$
  & outside $\Ast$ ($c>4$ real)
  & \emph{exterior}: wrong sheet, deep below $1/\sqrt2$
  & S4 open (disc.\ $-168$, $h=4$); S5 moot
  & blocked\\
10 & $347648256-141926400\sqrt6$ & $\ii\sqrt{42}/14$ & $0.463$
  & outside $\Ast$ ($s\approx995$, $c>4$ real)
  & \emph{exterior}: wrong sheet below $1/\sqrt2$
  & S4 open; S5 moot
  & blocked\\
\bottomrule
\end{tabular}
\end{table}

The conjectured right-hand sides are Samart's \cite[Table~6]{Sa15};
entries \#3 and \#4 are the two remaining sign patterns of
Theorem~\ref{thm:E}'s family (the other two being Theorems~\ref{thm:E}
and~\ref{thm:H}). All five remaining entries are \emph{blocked}, by
two genuine obstructions
analyzed in Section~\ref{sec:boundary}: the two-dimensional critical
image (\#4, \#5: the $s=81$ mechanism of \cite{Zf2}), and the
wrong-sheet phenomenon of the EK series below $\ImT\tau=1/\sqrt2$
(\#3, \#9, \#10, and again \#5), where the conjectured identity
concerns the true Mahler measure but the series side of the machine
has no valid input. The five entries that this scoring marked as
within current reach --- \#1, \#2, \#6, \#7, \#8 --- are now proved
as Theorems~\ref{thm:G}--\ref{thm:K} (Section~\ref{sec:app6}).

\begin{table}[htb]
\centering\scriptsize
\caption{All twenty-nine entries of Samart's Table~6 \cite{Sa15} and
their status. Abbreviations as in Table~\ref{tab:open}:
$\zeta=1207368$, $a=853632$, $b=697680$, $d=493272$; signs refer to
$s=\zeta\pm a\sqrt2\pm b\sqrt3\pm d\sqrt6$. For the twelve entries
proved here, the route column records the topological input:
\emph{direct} means the target point is covered by Samart's proved
pure-imaginary region; \emph{anchor+path} means the anchor of
Lemma~\ref{lem:anchor} together with an explicitly certified path
(Remark~\ref{rem:devil}).}
\label{tab:master29}
\setlength{\tabcolsep}{4pt}
\begin{tabular}{lllll}
\toprule
$s$ & $\tau_0$ & status & reference & route\\
\midrule
$256$ & --- & proved & Rogers \cite{Ro} & ---\\
$648$ & --- & proved & Samart \cite{Sa13} & ---\\
$2304$ & --- & proved & Samart \cite{Sa13} & ---\\
$20736$ & --- & proved & Samart \cite{Sa13} & ---\\
$614656$ & --- & proved & Samart \cite{Sa13} & ---\\
$26856+15300\sqrt3$ & --- & proved & Samart \cite[Thm.~3.1]{Sa15} &
  ---\\
$26856-15300\sqrt3$ & --- & proved & Samart \cite[Thm.~3.1]{Sa15} &
  ---\\
$-3969$ & $(1+\sqrt{-7})/2$ & proved & Fei \cite{Fei} +
  Remark~\ref{rem:fei} & ---\\
$-1024$ & --- & proved & Guo--Peng--Qin \cite{GPQ} & ---\\
$-12288$ & --- & proved & Guo--Peng--Qin \cite{GPQ} & ---\\
$-82944$ & --- & proved & Guo--Peng--Qin \cite{GPQ} & ---\\
\midrule
$8292456+3132675\sqrt7$ & $\ii\sqrt7$ & proved & Theorem~\ref{thm:A}
  & direct\\
$8292456-3132675\sqrt7$ & $\ii\sqrt7/2$ & proved & Theorem~\ref{thm:A}
  & direct\\
$-144$ & $(1+\sqrt{-3})/2$ & proved & Theorem~\ref{thm:B} &
  anchor+path\\
$3656+2600\sqrt2$ & $\ii\sqrt2$ & proved & Theorem~\ref{thm:C} &
  direct\\
$143208+101574\sqrt2$ & $2\ii$ & proved & Theorem~\ref{thm:D} &
  direct\\
$\zeta$, signs $({+}{+}{+})$ & $\ii\sqrt6$ & proved &
  Theorem~\ref{thm:E} & direct\\
$3656-2600\sqrt2$ & $(1+\sqrt{-2})/2$ & proved & Theorem~\ref{thm:F}
  & anchor+path\\
$143208-101574\sqrt2$ & $(1+2\ii)/2$ & proved & Theorem~\ref{thm:G}
  & anchor+path\\
$\zeta$, signs $({+}{-}{-})$ & $(1+\sqrt{-6})/2$ & proved &
  Theorem~\ref{thm:H} & anchor+path\\
$(-192303-85995\sqrt5)/2$ & $(1+\sqrt{-15})/2$ & proved &
  Theorem~\ref{thm:I} & anchor+path\\
$-893952+516096\sqrt3$ & $(3+\sqrt{-21})/6$ & proved &
  Theorem~\ref{thm:J} & anchor+path\\
$-893952-516096\sqrt3$ & $(1+\sqrt{-21})/2$ & proved &
  Theorem~\ref{thm:K} & anchor+path\\
\midrule
$81$ & $(7+\sqrt{-7})/4$ & \emph{refuted} & \cite{Zf2} & ---\\
\midrule
$\zeta$, signs $({-}{+}{-})$ & $\ii\sqrt6/4$ & open & entry \#3,
  Table~\ref{tab:open} & ---\\
$\zeta$, signs $({-}{-}{+})$ & $(2+\ii\sqrt6)/4$ & open & entry \#4,
  Table~\ref{tab:open} & ---\\
$(-192303+85995\sqrt5)/2$ & $(3+\ii\sqrt{15})/6$ & open & entry \#5,
  Table~\ref{tab:open} & ---\\
$347648256+141926400\sqrt6$ & $\ii\sqrt{42}/42$ & open & entry \#9,
  Table~\ref{tab:open} & ---\\
$347648256-141926400\sqrt6$ & $\ii\sqrt{42}/14$ & open & entry \#10,
  Table~\ref{tab:open} & ---\\
\bottomrule
\end{tabular}
\end{table}

Beyond Table~6, three further open directions are discussed in
Section~\ref{sec:boundary}: Samart's ``three modular $L$-value''
hypothesis $s=16+1600\sqrt[3]2-1280\sqrt[3]4$ \cite{Sa15}, which has
no attached CM point in the $s_4$-image; the exterior entries, for
which a corrected formula of Samart--Tao $\widetilde n$-type \cite{ST}
would be needed; and the Boyd-lineage two-variable conjectures, where
no CM evaluation is available at all.

% ============================================================
\section{The machine}\label{sec:machine}

This section axiomatizes the method of \cite[\S3--\S5]{Zf} in
family-independent language. The running example is the $n_4$-family,
but the only family-specific inputs are the parametrization, the
critical image, and the anchor identity; we display them as separate
\emph{stations} S1--S5, matching the checklist of
\S\ref{subsec:inventory}.

\subsection{Families, parametrizations, critical images}\label{subsec:fam}

\begin{definition}[Admissible family]\label{def:admissible}
An \emph{admissible family} is a Laurent-polynomial family $P_c=P+c$,
$c\in\Cc$ --- that is,
$P_c(x_1,\dots,x_n)=P(x_1,\dots,x_n)+c$ --- such that
$c\mapsto\m(P_c)$ is continuous on all of $\Cc$. (We avoid the word
\emph{tempered}, which has a different standard meaning in the
Mahler-measure literature. The required continuity is in fact
general: by Jensen's formula the Mahler measure is an integral of
one-variable Mahler measures over the remaining torus, one-variable
Mahler measure is continuous in the polynomial's coefficients, and
$\log^+$ of the coefficient sum provides an integrable dominator; the
detailed argument for the $n_2$ family is \cite[Lemma~2.1]{Zf}, the
other families here being identical.) A \emph{critical image} is any
closed set $K\subset\Cc$ containing
$-P(\Tt^n):=\{-P(x):x\in\Tt^n\}$; on $\Cc\setminus K$ the polynomial
$P_c$ does not vanish on $\Tt^n$, the integrand $\log|P_c|$ is
real-analytic in $c$, and $c\mapsto\m(P_c)$ is real-analytic.
(Taking only the critical \emph{values} of $P$ would not suffice:
$P_c$ vanishes on $\Tt^n$ for every $-c\in P(\Tt^n)$, including
regular values --- e.g.\ $P=x+x^{-1}$ on $\Tt^1$ has critical values
$\pm2$ but image $[-2,2]$. In both running examples below
$P(\Tt^n)$ is centrally symmetric, so $-P(\Tt^n)=P(\Tt^n)$ and we
take $K=\overline{P(\Tt^n)}$.)
\end{definition}

Two families serve as running examples.

\begin{example}[The $n_2$ family \cite{Zf}]\label{ex:n2}
$P+c$ with $P=(x+x^{-1})(y+y^{-1})(z+z^{-1})$ and
$c=\sqrt{s_2(\tau)}$, $s_2(\tau)=q^{-1}\prod_{n\ge1}(1+q^{2n-1})^{24}$.
The critical image in the $c$-plane is the slit $[-8,8]$, and in the
$s$-plane the segment $[0,64]$: one-dimensional.
\end{example}

\begin{example}[The $n_4$ family]\label{ex:n4}
$P_c=(x^4+y^4+z^4+1)/(xyz)+c$ after clearing the monomial (Mahler
measure is unchanged), with the natural \emph{holomorphic}
parametrization
\begin{equation}\label{eq:cdef}
  c(\tau)=\Bigl(\frac{\eta(2\tau)}{\eta(\tau)}\Bigr)^{6}
   \bigl(16A(\tau)^4+A(\tau)^{-4}\bigr),\qquad
  A(\tau)=\frac{\eta(\tau)\eta(4\tau)^2}{\eta(2\tau)^3},\qquad
  s_4(\tau)=c(\tau)^4 .
\end{equation}
Both $c$ and $s_4$ are holomorphic on $\Hh$ --- no branch choice is
needed, in contrast to Example~\ref{ex:n2} --- and $s_4$ has an
integral $q$-expansion
$s_4=q^{-1}\bigl(1+104q+4372q^2+\cdots\bigr)\in\Zz(\!(q)\!)$
(exact integer check to order $q^{30}$ in
\texttt{cert0\_s4\_m144.py}). The critical image is
\emph{two-dimensional}: $P(\Tt^3)$ is the closed astroid disc
\begin{equation}\label{eq:astroid}
  \Ast=\bigl\{c\in\Cc:\ |\ReT c|^{2/3}+|\ImT c|^{2/3}\le 4^{2/3}\bigr\},
\end{equation}
since $P$ is a sum of four unit phasors with product $1$
\cite{Zf2}. The branch cut of the holomorphic Mahler measure is the
cross $\{c:c^4\in(0,256]\}=\bigl([-4,4]\cup\ii[-4,4]\bigr)
\setminus\{0\}$, contained in $\overline{\Ast}$; hence avoiding
$\Ast$ automatically avoids every bad set, and we take $K=\Ast$.
\end{example}

\begin{remark}\label{rem:2D}
The dimension of the critical image is the single most important
topological invariant of a family for this method. A one-dimensional
slit does not separate the plane: every point of the slit lies in the
closure of the good domain, and the propagation lemma below reaches
boundary points by continuity (this is what makes the $k=1$ proof of
\cite{Zf} possible). The two-dimensional astroid disc has interior:
a CM parameter landing inside it (\#4 and \#5 of
Table~\ref{tab:open}; the refuted $s=81$) lies inside an open bad
region, no path can approach it from the good domain (open mapping),
and the machine stops. See Section~\ref{sec:boundary}.
\end{remark}

\begin{definition}[The good domain and the EK expression]
For a holomorphic parametrization $c(\tau)$ set
\[
  V:=\bigl\{\tau\in\Hh:\ c(\tau)\notin K\bigr\}\qquad
  (\text{open, since }c\text{ is holomorphic}),
\]
and let $F(\tau):=\m\bigl(P_{c(\tau)}\bigr)$ (times the family's
normalization factor), continuous on all of $\Hh$ and real-analytic
on $V$. An \emph{EK datum} is a holomorphic function $E(\tau)$ on
$\Hh$ with $G:=\ReT E$ (in alternative normalizations $2\ReT E$ or
$\ImT E$; all that matters is that $G$ be the real part of a
holomorphic function) real-analytic everywhere, such that $F=G$ on a
nonempty open \emph{anchor} set $U\subset V$. For the $n_4$ family
(Example~\ref{ex:n4}), with $U_j$ as in \eqref{eq:Uj} below,
\begin{equation}\label{eq:E4}
  E_4(\tau):=-\ii\Bigl(2\pi\tau+\frac{10}{\pi^3}
   \bigl(U_1(\tau)-2U_2(\tau)\bigr)\Bigr),\qquad
  \EK_4(\tau):=\ReT E_4(\tau)
   =\ImT\Bigl[2\pi\tau+\frac{10}{\pi^3}\bigl(U_1-2U_2\bigr)\Bigr],
\end{equation}
holomorphic, resp.\ real-analytic, on all of $\Hh$; the double series
\begin{equation}\label{eq:Uj}
  U_j(\tau):=\sum_{m\neq0}\frac1m\sum_{n\in\Zz}(jm\tau+n)^{-3}
\end{equation}
converges absolutely, locally uniformly on $\Hh$
\cite[\S2.4]{Zf}. The lattice-sum form used for all CM evaluations is
\begin{equation}\label{eq:EK4lattice}
  \EK_4(\tau)=\frac{10\,\ImT\tau}{\pi^3}
   \bigl(-T_1(\tau)+4T_2(\tau)\bigr),\qquad
  T_d(\tau):={\sum_{\lambda\in\Lambda_d(\tau)}}^{\!\prime}
   \Bigl[\frac{2\ReT\bar\lambda^2}{|\lambda|^6}
        +\frac{1}{|\lambda|^4}\Bigr],
\end{equation}
with $\Lambda_d(\tau):=\Zz+\Zz\,d\tau$; the series is absolutely
convergent, and \eqref{eq:EK4lattice} follows from
$w^{-3}-\bar w^{-3}=-2\ii(3u^2v-v^3)|w|^{-6}$ and
$3u^2-v^2=4u^2-|w|^2$ exactly as in \cite[Lemma~2.6]{Zf}.
\end{definition}

\subsection{The Mahler differential is single-valued}\label{subsec:diff}

\begin{proposition}[Mahler differential formula]\label{prop:mahlerdiff}
Let $P_c$ be an admissible family and $K\subset\Cc$ its critical image.
Then
\[
  \Omega(c):=\int_{\Tt^n}\frac{\id\mu}{P_c}
\]
is single-valued and holomorphic on $\Cc\setminus K$, and as real
$1$-forms there,
\begin{equation}\label{eq:dm}
  \id\m(P_c)=\ReT\bigl[\Omega(c)\,\id c\bigr].
\end{equation}
\end{proposition}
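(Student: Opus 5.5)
The plan is to work directly with the integral over the fixed compact torus $\Tt^n$, so that no analytic continuation, and hence no monodromy, ever enters. Here $\id\mu$ is normalized Haar measure.

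\emph{Single-valuedness and holomorphy.} Fix $c_0\in\Cc\setminus K$. Since $K\supseteq -P(\Tt^n)$ is closed and $\Tt^n$ is compact, the continuous function $x\mapsto|P(x)+c|$ is bounded below on $\Tt^n$ by $\delta:=\tfrac12\dist(c_0,K)>0$ for all $c$ in the disc $D=\{|c-c_0|<\delta\}$. On $D\times\Tt^n$ the integrand $1/(P(x)+c)$ is therefore continuous, holomorphic in $c$, and bounded by $1/\delta$. Differentiation under the integral sign then shows that $\Omega$ is holomorphic on $D$, with $\Omega'(c)=-\int_{\Tt^n}(P+c)^{-2}\,\id\mu$. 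One could equally use Morera's theorem together with Fubini.

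Single-valuedness needs no separate argument. $\Omega(c)$ is defined pointwise as an integral over the same cycle $\Tt^n$ for every $c\in\Cc\setminus K$, so it is an honest function on that set, not a germ to be continued. This is the key point: it holds even though $\Cc\setminus K$ need not be simply connected. For example, in Example~\ref{ex:n2} the complement is an annulus-like region around the slit.

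\emph{The differential identity.} On $\Cc\setminus K$ we have $\m(P_c)=\int_{\Tt^n}\log|P(x)+c|\,\id\mu=\int_{\Tt^n}\ReT\log(P(x)+c)\,\id\mu$. Locally in $c$, for fixed $x$, we may choose a holomorphic branch of $\log(P(x)+c)$. Its real part $\log|P(x)+c|$ is independent of the branch, and the $c$-derivative $1/(P(x)+c)$ is also branch-free. For each fixed $x$ the function $c\mapsto\log|P(x)+c|$ is harmonic, with
\[
\id_c\log|P(x)+c| = \ReT\Bigl[\frac{\id c}{P(x)+c}\Bigr],
\]
that is, $\partial_u=\ReT(1/(P+c))$ and $\partial_v=-\ImT(1/(P+c))$ for $c=u+\ii v$. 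These partial derivatives are continuous on $D\times\Tt^n$ and bounded by $1/\delta$. Dominated differentiation under the integral therefore gives $\partial_u\m(P_c)=\ReT\Omega(c)$ and $\partial_v\m(P_c)=-\ImT\Omega(c)$, which together are exactly \eqref{eq:dm}. The same argument with higher derivatives gives the real-analyticity asserted in Definition~\ref{def:admissible}, although only $C^1$ is needed for \eqref{eq:dm}.

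\emph{Main obstacle.} The one substantive point is the uniform lower bound on $|P_c|$ over $\Tt^n$ for $c$ near $c_0$. This is where the definition of critical image matters: $K$ must contain the full image $-P(\Tt^n)$, not merely the critical values. The Remark after Definition~\ref{def:admissible} makes exactly this point, and with it the bound follows from compactness. Nothing beyond Definition~\ref{def:admissible} is used. In particular, admissibility (continuity on all of $\Cc$) is not needed on $\Cc\setminus K$; it only becomes relevant later, when boundary points are reached.
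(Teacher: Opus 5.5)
Your proposal is correct and follows the paper's proof step for step. The uniform lower bound $|P_c|\ge\delta>0$ on $\Tt^n$ for $c$ near any point of $\Cc\setminus K$ (coming from $-P(\Tt^n)\subset K$) justifies differentiating under the integral over the fixed cycle $\Tt^n$, which gives holomorphy and single-valuedness of $\Omega$, and the partials $\partial_u\m(P_c)=\ReT\Omega$, $\partial_v\m(P_c)=-\ImT\Omega$ then assemble to $\ReT[\Omega\,\id c]$ exactly as in the paper; the only difference is that you work on a disc of radius $\tfrac12\dist(c_0,K)$ where the paper uses an arbitrary compact $K_0\Subset\Cc\setminus K$.
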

\begin{proof}
For any compact $K_0\Subset\Cc\setminus K$ one has
$|P_c|\ge\dist(K_0,K)>0$ uniformly for $c\in K_0$ on $\Tt^n$, so
differentiation under the integral sign is legitimate and gives
$\Omega'(c)=-\int_{\Tt^n}\id\mu/P_c^2$; holomorphy and
single-valuedness follow (the integral is defined over the
\emph{fixed} cycle $\Tt^n$). Writing $c=c_1+\ii c_2$ and
differentiating $\m(P_c)=\int\log|P_c|\id\mu$ under the integral
sign gives
$\partial_{c_1}\m=\int\ReT(1/P_c)\id\mu=\ReT\Omega$ and
$\partial_{c_2}\m=\int\ReT(\ii/P_c)\id\mu=-\ImT\Omega$, hence
$\id\m=\ReT\Omega\,\id c_1-\ImT\Omega\,\id c_2
=\ReT[\Omega\,\id c]$. The sign is $+$ under the present $P+c$
convention (it is $-$ in the $f-c$ convention of
\cite[Lemma~2.7]{Zf}).
\end{proof}

\begin{remark}\label{rem:monodromy}
Proposition~\ref{prop:mahlerdiff} is the point at which all
monodromy and universal-cover arguments disappear from the method.
The holomorphic Mahler measure $\mt$ of Villegas \cite{RV} is
multivalued, but its multivaluedness lives in locally constant
imaginary periods: every branch satisfies $\mt'(c)=\Omega(c)$ on
$\Cc\setminus K$. Comparing \emph{differentials} rather than
functions therefore never sees a branch choice, and the identity
theorem below applies to a genuine single-valued holomorphic
function.
\end{remark}

\subsection{The propagation theorem}\label{subsec:prop}

\begin{lemma}[Propagation lemma]\label{lem:prop}
Let $U\subset\Cc$ be connected open, $Z\subset U$ relatively closed,
and $H:U\to\Rr$ continuous and locally constant on $U\setminus Z$.
Then $H$ is constant on every connected component $W$ of
$U\setminus Z$; if $H|_W\equiv C$ then
$H|_{\overline W\cap U}\equiv C$.
\end{lemma}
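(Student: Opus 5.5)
The plan has two parts: a connectedness argument on $W$, then a continuity argument at the boundary. Both parts use only point-set topology.

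\emph{Step 1: $H$ is constant on $W$.} Fix a connected component $W$ of $U\setminus Z$. Since $Z$ is relatively closed in $U$ and $U$ is open in $\Cc$, the set $U\setminus Z$ is open in $\Cc$. Its components are therefore open, because $\Cc$ is locally connected, so $W$ is open and connected. Pick $w_0\in W$ and put $C:=H(w_0)$. Consider
\[
  S:=\{w\in W:\ H(w)=C\}.
\]
$S$ is nonempty. It is open, because $H$ is locally constant on $U\setminus Z\supset W$: each $w\in S$ has a neighbourhood in $W$ on which $H\equiv H(w)=C$. It is also relatively closed in $W$, because $H$ is continuous. Connectedness of $W$ then gives $S=W$, so $H|_W\equiv C$. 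Since $W$ was arbitrary, this proves the first assertion.

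\emph{Step 2: extension to $\overline W\cap U$.} Let $p\in\overline W\cap U$. Choose a sequence $w_k\in W$ with $w_k\to p$. Because $p\in U$ and $H$ is continuous on all of $U$, we get $H(p)=\lim_k H(w_k)=C$. This is exactly why continuity is assumed on the whole of $U$ and not merely off $Z$: boundary points of $W$ lying in $Z$ are still reached by this limit.

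There is no real obstacle. The only point needing care is that components of an open subset of $\Cc$ are open, which is what makes local constancy propagate. Connectedness of $U$ is not actually used; the hypothesis is harmless. In the application one takes $H=F-G$ and $Z=\Hh\setminus V$; local constancy of $F-G$ on $V$ will come from Proposition~\ref{prop:mahlerdiff}.
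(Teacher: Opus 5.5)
Your proof is correct and follows the paper's route: Step~1 spells out the clopen argument behind the paper's one-line ``a locally constant function is constant on each connected component,'' and Step~2 is the paper's sequence-and-continuity argument. One caution applies only to your closing remark about the application: Proposition~\ref{prop:mahlerdiff} alone does not make $F-G$ locally constant on all of $V$, because that needs $\Psi\equiv0$, which the anchor plus the identity theorem give only on the anchor's component $W$ (it fails on the lower component $W'$ of Section~\ref{sec:boundary}); your argument, however, uses local constancy only on $W$, so the lemma still applies there.
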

\begin{proof}
A locally constant function is constant on each connected component.
For the second assertion, let $p\in\overline{W}\cap U$ and choose
$p_n\in W$ with $p_n\to p$; continuity gives $H(p)=\lim_n H(p_n)=C$.
\end{proof}

\begin{remark}\label{rem:devil}
The conclusion cannot be upgraded to ``$H$ constant on $U$'': the
devil's staircase is continuous, locally constant off the Cantor set,
yet non-constant. Membership of the target point in $\overline W$ is
a genuine topological input --- supplied in our applications either
by Samart's proved region (Theorems A, C, D, E), or by an explicitly
certified path (Theorems B, F--K), and \emph{absent} for the blocked
entries of Table~\ref{tab:open}.
\end{remark}

\begin{theorem}[Main theorem: differential-comparison continuation]
\label{thm:main}
Let $P_c$ be an admissible family with critical image $K$, let
$c(\tau)$ be holomorphic on $\Hh$, set
$V=\{\tau:c(\tau)\notin K\}$, and let $F(\tau)=\m(P_{c(\tau)})$
\textup{(}with the family normalization\textup{)}. Let $E(\tau)$ be
holomorphic on $\Hh$, set $G:=\ReT E$ (the $n_4$ normalization
$G=\EK_4=\ReT E_4$ of \eqref{eq:E4}; rescaling $E$ absorbs any
other convention), and suppose $F=G$ on a nonempty
open subset of a connected component $W$ of $V$. Then
\begin{equation}\label{eq:mainconcl}
  F(\tau)=G(\tau)\qquad\text{for every }
  \tau\in\overline{W}\cap\Hh .
\end{equation}
\end{theorem}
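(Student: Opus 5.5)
The plan is to apply the propagation lemma (Lemma~\ref{lem:prop}) with $U=\Hh$, $Z=\Hh\setminus V=c^{-1}(K)$ (relatively closed, since $c$ is continuous and $K$ is closed), and $H:=F-G$. Continuity of $H$ on all of $\Hh$ is immediate: $F$ is continuous because $c$ is holomorphic and $c\mapsto\m(P_c)$ is continuous by admissibility (Definition~\ref{def:admissible}), and $G=\ReT E$ is continuous because $E$ is holomorphic on $\Hh$. Since $\Hh$ is connected, the only work is to show that $H$ is locally constant on $V$, and in fact constant on the given component $W$.

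Fix $\tau$ in the component $W$. By Proposition~\ref{prop:mahlerdiff} and the chain rule, $\id F=\ReT[\Omega(c(\tau))\,c'(\tau)\,\id\tau]$ on $V$, where $\Phi(\tau):=\Omega(c(\tau))c'(\tau)$ is a single-valued holomorphic function on $V$. Likewise $\id G=\ReT[E'(\tau)\,\id\tau]$ on all of $\Hh$. Hence $\id H=\ReT[(\Phi-E')\,\id\tau]$ on $V$. On the nonempty open set $U_0\subset W$ where $F=G$, we have $\id H=0$, so $\ReT[(\Phi-E')\id\tau]=0$. Writing $h=\Phi-E'$ and $\id\tau=\id x+\ii\,\id y$, this says $\ReT h\,\id x-\ImT h\,\id y=0$, that is, $\ReT h=\ImT h=0$, so $h\equiv0$ on $U_0$. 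Since $h$ is holomorphic on the connected open set $W$, the identity theorem gives $h\equiv0$ on all of $W$. Therefore $\id H=0$ on $W$, so $H$ is locally constant on $W$; being continuous and locally constant on the connected set $W$, it is constant there. Its value is $0$, as seen on $U_0$.

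It remains to reach $\overline W\cap\Hh$. This is exactly the second clause of Lemma~\ref{lem:prop}, and I would apply it in the simplest form: for $p\in\overline W\cap\Hh$, take $p_n\in W$ with $p_n\to p$; then continuity of $H$ on $\Hh$ gives $H(p)=\lim H(p_n)=0$. To match the lemma's hypothesis verbatim, I would note that $H$ need only be locally constant on $W$ itself, since the lemma's argument is applied component by component. Alternatively, one can simply repeat its two-line proof. This yields \eqref{eq:mainconcl}.

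I expect the main obstacle to be the step $\id H=0\Rightarrow h=0$ and its propagation, that is, the correct use of single-valuedness. One must ensure that $\Phi$ is a genuine holomorphic function on $W$, not merely a locally defined branch, so that the identity theorem applies. This is what Proposition~\ref{prop:mahlerdiff} provides: $\Omega$ is an integral over the fixed cycle $\Tt^n$, and it is composed with the single-valued holomorphic $c$. A secondary point is that $F$ is only continuous, not differentiable, across $c^{-1}(K)$. This is why the argument confines all differentiation to $W$ and crosses into $\partial W$ only through continuity, never through analytic continuation.
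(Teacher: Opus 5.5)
Your proposal is correct and follows the paper's proof essentially step for step. The paper's $\Psi$ is your $\Phi-E'$, with the normalization factor $\nu$ absorbed. It vanishes on the anchor set by testing the real $1$-form in two directions, hence on all of $W$ by the identity theorem; so $H=F-G$ is zero on $W$, and it extends to $\overline W\cap\Hh$ by continuity.

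Your observation that only the closure clause of Lemma~\ref{lem:prop} is needed is slightly more careful than the paper. The paper invokes the lemma verbatim with $Z=c^{-1}(K)$, but $H$ is shown to be locally constant only on $W$, not on all of $V$. Indeed, the paper itself notes that $H$ is non-constant on the lower component $W'$.
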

\begin{proof}
Write $F=\nu\,\m(P_{c(\tau)})$ with the family's constant
normalization factor $\nu$ ($\nu=4$ for the $n_4$ family).
On $V$ both $\Omega(c(\tau))c'(\tau)$ and $E'(\tau)$ are holomorphic,
so $\Psi(\tau):=\nu\,\Omega(c(\tau))c'(\tau)-E'(\tau)$ is
holomorphic on $V$. By the chain rule applied to \eqref{eq:dm},
$\id F=\nu\,\ReT[\Omega(c(\tau))c'(\tau)\,\id\tau]$ on $V$, and
$\id G=\ReT[E'(\tau)\,\id\tau]$ there; hence
$\id(F-G)=\ReT[\Psi(\tau)\,\id\tau]$. On the anchor open set
$F-G=0$, so $\ReT[\Psi\,\id\tau]=0$ in all directions
$\id\tau$; testing $\id\tau=1$ and $\id\tau=\ii$ gives $\Psi=0$
there, and the identity theorem on the connected open set $W$ gives
$\Psi\equiv0$ on $W$. Thus $H:=F-G$ is locally constant on $W$,
constant since $W$ is connected, and zero because it vanishes on the
anchor; $H$ is continuous on all of $\Hh$ (Mahler continuity,
Definition~\ref{def:admissible}; $G$ is real-analytic everywhere), so
Lemma~\ref{lem:prop} with $U=\Hh$, $Z=c^{-1}(K)$ gives
$H\equiv0$ on $\overline W\cap\Hh$.
\end{proof}

\begin{remark}[The five stations]\label{rem:stations}
Instantiating Theorem~\ref{thm:main} for a conjectured identity
$n_4(s)=R$ at a CM parameter splits into the five stations of
\S\ref{subsec:inventory}: S1 identifies $\tau_0$ with
$s_4(\tau_0)=s$; S2 locates $c(\tau_0)$ relative to $\Ast$; S3
certifies $\tau_0\in\overline W$ (or gets it for free from Samart's
proved region); S4 upgrades the numerical coincidence $s_4(\tau_0)=s$
to an exact algebraic lock (\S\ref{subsec:cert0}); S5 evaluates
$\EK_4(\tau_0)$ exactly as a rational linear combination of
$L$-values (\S\ref{subsec:verify}). Theorem~\ref{thm:main} then
yields $n_4(s_4(\tau_0))=\EK_4(\tau_0)$, and S4+S5 turn this into
the identity $n_4(s)=R$.
\end{remark}

\subsection{The anchor identity for the \texorpdfstring{$n_4$}{n4}
family}\label{subsec:anchor}

The anchor input is Samart's theorem, quoted with one repair.

\begin{theorem}[Samart, {\cite[Prop.~2.1(iii)]{Sa13}}]\label{thm:samart4}
For pure imaginary $\tau=\ii y$ with $y\ge1/\sqrt2$,
$n_4\bigl(s_4(\tau)\bigr)=\EK_4(\tau)$.
\end{theorem}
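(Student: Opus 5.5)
The plan is to follow Samart's route, with his proof as the skeleton and the ``one repair'' located where his argument implicitly takes the principal branch of the holomorphic Mahler measure. Write $c=c(\ii y)$. Along the imaginary axis $q=\e^{-2\pi y}$ is real, so $c(\ii y)$ is real with a real $q$-expansion. The first step is to show that for $y>1/\sqrt2$ the value $c(\ii y)$ lies on the real axis outside $[-4,4]$. Since $\Ast\cap\Rr=[-4,4]$, this is the same as $s_4(\ii y)>256$. I would argue this by monotonicity of $s_4(\ii y)$ in $y$: its leading behaviour is $q^{-1}$ with positive integer coefficients $1,104,4372,\dots$, so it is increasing in $y$ for large $y$. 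At the endpoint $y=1/\sqrt2$ one has $s_4=256$, which is Rogers' value and the point where the astroid is hit, and an interval-arithmetic check covers the remaining compact range. Hence the ray $\{\ii y:y>1/\sqrt2\}$ lies in $V$.

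On this ray, for real $c>4$, I would expand $1/P_c=\sum_{k\ge0}(-1)^kP^k c^{-k-1}$. This is valid because $|P|\le4<c$ on $\Tt^3$. Integrating term by term gives
\[
\Omega(c)=\sum_k (-1)^k a_k c^{-k-1},
\]
where $a_k$ is the constant term of $P^k$. Only $k\equiv0\pmod4$ survives, and the resulting terms give the hypergeometric series ${}_3F_2(\tfrac14,\tfrac12,\tfrac34;1,1;256/s)$ in $s=c^4$. The holomorphic Mahler measure is then
\[
\mt=\log c-\sum_{k\ge1}\frac{a_k}{k}(-c)^{-k},
\]
and for $c>4$ real it equals the genuine $\m(P_c)$. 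This follows by taking $\log|P_c|=\log c+\ReT\log(1+P/c)$ and expanding, with no branch issue because $|P/c|<1$.

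The second step is the modular identification. Using the standard Ramanujan--type fact that ${}_3F_2$ above, pulled back by $s_4$, is a weight-one (squared weight-$\tfrac12$) modular form $f(\tau)$ for the level-$4$ group, one obtains $\nu\,\Omega(c(\tau))c'(\tau)=E_4'(\tau)$, where the right side is $-\ii(2\pi+\frac{10}{\pi^3}(U_1'-2U_2'))$. I would verify this as an identity of $q$-series: both sides are weight-three-type Eisenstein data, and comparing finitely many coefficients up to a Sturm bound settles it. Samart does exactly this through the Eisenstein series of weight $3$.

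Integrating, $F-G$ is constant on the ray. Its value is then determined by the limit $y\to\infty$, where $\log c\sim 2\pi y/4\cdot4$ matches the $\ImT(2\pi\tau)$ term and the $U_j$ terms decay. So the constant is $0$. Continuity extends the identity to the closed endpoint $y=1/\sqrt2$, and this extension is where I expect the repair to be needed: his statement at the boundary point requires Mahler continuity (Definition~\ref{def:admissible}), not analyticity.

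The main obstacle is the modular identification of $\Omega\circ c\cdot c'$ with $E_4'$. It requires the correct level and multiplier for $f$ and a rigorous Sturm-bound or dimension count. I would try first to cite the hypergeometric–theta identity for level $4$ and then reduce to a coefficient check certified by exact integer arithmetic, as is done with \texttt{cert0\_s4\_m144.py}.
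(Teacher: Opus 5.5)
Your argument is correct in outline, but it follows a different route from the one the paper relies on, and it is not Samart's route either. The paper does not reprove this statement. It quotes Samart, whose proof (as unpacked in the proof of Lemma~\ref{lem:anchor}) composes three inputs:
\begin{itemize}
\item Rogers' ${}_5F_4$ evaluation of $n_4(s)$ for $|s|>256$;
\item Rogers' $q$-series transformation $n_4(s_4(q))=-\tfrac13G(q)+\tfrac23G(q^2)$ for small real $q$;
\item a termwise Fourier-expansion identification of that $q$-series with $\EK_4$.
\end{itemize}
Real-variable monotonicity of $s_4$ on the imaginary axis then places $y\ge1/\sqrt2$ in the convergence region.

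You instead run the paper's own differential comparison (Theorem~\ref{thm:main}) with the cusp in place of an anchor. You prove $4\,\Omega(c(\tau))\,c'(\tau)=E_4'(\tau)$ by modularity, integrate via Proposition~\ref{prop:mahlerdiff}, and kill the constant using $F-G=O(|q|)$ at $\ii\infty$; your asymptotics there are right. This route never touches the ${}_5F_4$ and absorbs Samart's Fourier step. The reason is that $E_4'(\tau)=-\tfrac{2\pi\ii}{3}\bigl(4\mathcal E_4(2\tau)-\mathcal E_4(\tau)\bigr)$, with $\mathcal E_4=1+240\sum_{n\ge1}\sigma_3(n)q^n$, is an honest holomorphic weight-$4$ form on $\Gamma_0(2)$: the $2\pi\tau$ term supplies exactly the missing constant term. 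The price is that the hypergeometric--modular input is now yours to supply. The route also gives more than the statement. Once the identity holds as $q$-expansions near the cusp, the identity theorem carries it through the whole component of $V$ containing a cusp neighbourhood, and the cusp limit makes $F\equiv G$ there. That is the open anchor of Lemma~\ref{lem:anchor}, obtained with no real-variable input.

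Three corrections are needed.

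(a) \emph{Weights.} By Clausen, ${}_3F_2(\tfrac14,\tfrac12,\tfrac34;1,1;x)={}_2F_1(\tfrac18,\tfrac38;1;x)^2$. So the pulled-back period has weight $2$ (it is the square of a weight-$1$ form of the signature-$4$ theory), not weight $1$. The identity to be checked is therefore in weight $4$, not $3$. Before applying a weight-$4$ Sturm bound, you must also check that the left side is holomorphic at the zeros of $s_4$, which are the elliptic points of $\Gamma_0(2)$, e.g.\ $(1+\ii)/2$. There the pole of $s_4'/s_4$ is cancelled by the forced zero of the weight-$2$ factor.

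(b) \emph{Step~1 fails as written at the endpoint.} The point $\ii/\sqrt2$ is the fixed point of $W_2$, so $y\mapsto s_4(\ii y)$ has a critical point there with value $256$. Consequently no interval enclosure of $s_4-256$, or of $\partial_y s_4$, can be certified positive all the way down to $y=1/\sqrt2$. Use instead $s_4=(t+64)^2/t$ with $t=(\eta(\tau)/\eta(2\tau))^{24}=q^{-1}\prod_{n\ge1}(1+q^n)^{-24}$. This follows from \eqref{eq:cdef}, e.g.\ via $16A^8=\lambda(2\tau)$ and Landen's transformation. On the axis $t$ is positive and strictly increasing in $y$, and $t(\ii/\sqrt2)=64$ because $t(-1/(2\tau))=4096/t(\tau)$. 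Hence $s_4-256=(t-64)^2/t>0$ for $y\neq1/\sqrt2$.

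(c) \emph{Location of the repair.} The paper's ``one repair'' is not at the endpoint; continuity there is routine, as you say. It is the replacement of exactly this real-variable monotonicity by the certified bound $|s_4|>256$ on an open box, so that Samart's half-line statement can serve as an anchor (Lemma~\ref{lem:anchor}). The statement itself is quoted unchanged.
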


\begin{lemma}[Composite continuation lemma: the anchor on
$|s_4|>256$]\label{lem:anchor}
Let $C_\infty$ be the connected component of
$\{\tau\in\Hh:|s_4(\tau)|>256\}$ containing a punctured
neighbourhood of the cusp $\ii\infty$, and let $D\subset C_\infty$
be open. Then $n_4(s_4(\tau))=\EK_4(\tau)$ on $D$.
\end{lemma}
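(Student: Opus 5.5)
The plan is to apply Theorem~\ref{thm:main} with $W$ the component of $V$ containing $C_\infty$, using Samart's theorem only to supply the anchor open set. The first step is to show $C_\infty\subset V$. If $|s_4(\tau)|>256$ then $|c(\tau)|>4$. Every point of the astroid disc $\Ast$ satisfies $|c|\le 4$: indeed $|\ReT c|^{2/3}+|\ImT c|^{2/3}\le 4^{2/3}$ forces $|\ReT c|,|\ImT c|\le4$, and more precisely $|c|^2=x^2+y^2\le (x^{2/3}+y^{2/3})^3\le 16$. Hence $c(\tau)\notin K=\Ast$. Since $C_\infty$ is connected and contained in $V$, it lies in a single connected component $W$ of $V$.

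The second step is to produce the anchor inside $C_\infty$. Because $s_4=q^{-1}(1+104q+\cdots)$, we have $|s_4(\ii y)|\to\infty$ as $y\to\infty$. So there is $Y\ge 1/\sqrt2$ such that the punctured cusp neighbourhood $\{\ImT\tau>Y\}$ lies in $C_\infty$; this needs $|s_4|>256$ on all of $\{\ImT\tau>Y\}$, which follows from the tail bound $|q\,s_4(\tau)-1|<1/2$ for $|q|$ small. For $y>Y$, Theorem~\ref{thm:samart4} gives $F(\ii y)=G(\ii y)$ on the ray $\{\ii y: y>Y\}$. Theorem~\ref{thm:main} asks for equality on an open set, not on a ray, so the next step upgrades it.

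The third step, which I expect to be the main obstacle, is that upgrade. On $W$ the function $\Psi=4\,\Omega(c)c'-E_4'$ is holomorphic, and $F-G$ is real-analytic with $\id(F-G)=\ReT[\Psi\,\id\tau]$. Vanishing of $F-G$ along the ray gives $\ReT[\ii\Psi]=-\ImT\Psi=0$ there, but not $\ReT\Psi=0$. I would use symmetry to close the gap. The $q$-expansion of $s_4$ has integer coefficients, so $s_4(-\bar\tau)=\overline{s_4(\tau)}$, and the Mahler measure of $P_{\bar c}$ equals that of $P_c$ by conjugating the variables; thus $F(-\bar\tau)=F(\tau)$. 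Likewise $\EK_4(-\bar\tau)=\EK_4(\tau)$: replacing $\tau$ by $-\bar\tau$ maps each lattice $\Lambda_d$ to its complex conjugate, which leaves $T_d$ unchanged. Hence $F-G$ is even in $\ReT\tau$, its $\partial_{\ReT\tau}$ derivative vanishes on the imaginary axis, and so $\ReT\Psi=0$ on the ray as well. Therefore $\Psi=0$ on the ray, and by the identity theorem $\Psi\equiv0$ on $W$. Then $F-G$ is locally constant on $W$, zero at a point of the ray, and hence zero on all of $W$, which contains $C_\infty\supset D$. This is the same conclusion as Theorem~\ref{thm:main}, reached by running its proof with this anchor.

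As an alternative route, if the symmetry argument seemed delicate, I would appeal to real-analyticity. $F-G$ is real-analytic on $W$ and $\Delta(F-G)=0$ there, since $F-G$ is the real part of a holomorphic primitive of $\Psi$, which exists on simply connected subdomains. A harmonic function vanishing on a segment need not vanish identically, so this alone is not enough. The evenness is what supplies the Cauchy data $(F-G,\partial_n(F-G))=(0,0)$ on the segment, and uniqueness for the Cauchy problem then finishes the argument.
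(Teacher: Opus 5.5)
Your argument is correct, but it takes a genuinely different route from the paper.

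The paper proves the lemma by reopening its sources, in three steps:
\begin{itemize}
\item it lifts Rogers' ${}_5F_4$ evaluation to a single-valued holomorphic identity, giving $n_4(s)=\ReT R(s)$ on all of $\{|s|>256\}$;
\item it lifts Rogers' $q$-series transformation to a holomorphic identity $\Phi\equiv0$ and continues it along $C_\infty$;
\item it invokes Samart's termwise identification of that $q$-series with $\EK_4$, arguing from a line-by-line check of \cite{Sa13,Ro} that the only real-$q$ step is the monotonicity comparison. The hypothesis $|s_4|>256$ replaces that step.
\end{itemize}

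You instead use only the published conclusion of Theorem~\ref{thm:samart4}, and only on a far segment of the imaginary axis. You supply the missing normal derivative by the reflection $\tau\mapsto-\bar\tau$:
\begin{itemize}
\item $c(-\bar\tau)=\overline{c(\tau)}$ and $\m(P_{\bar c})=\m(P_c)$ give $F(-\bar\tau)=F(\tau)$;
\item $\Lambda_d(-\bar\tau)=\overline{\Lambda_d(\tau)}$ and the conjugation invariance of $T_d$ give $G(-\bar\tau)=G(\tau)$.
\end{itemize}
Hence $\ReT\Psi=\partial_x(F-G)$ and $-\ImT\Psi=\partial_y(F-G)$ both vanish on the ray. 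The identity theorem then forces $\Psi\equiv0$ on the component $W$, and $F-G\equiv0$ there.

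What your route buys:
\begin{itemize}
\item no branch bookkeeping for $\log s$, $\log q$ or $\log(1-q^n)$;
\item no dependence on the internal structure of Samart's and Rogers' proofs;
\item a stronger conclusion, namely equality on the whole component of $V_4$ containing the ray, not just on $C_\infty$.
\end{itemize}
It also means that in the applications the certified bound $|s_4|\ge493$ on the anchor box is not needed. That box contains the segment $\{\ii y:|y-1|\le1/32\}$ of Samart's ray, so its astroid margin alone puts it in the right component.

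The paper's route, in exchange, avoids the reflection symmetry. It works directly with the explicit hypergeometric and $q$-series identities, and it yields as a byproduct the $s$-plane statement $n_4(s)=\ReT R(s)$ throughout $\{|s|>256\}$.
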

\begin{proof}[Proof sketch with quoted inputs]
Three quoted ingredients compose. (i) \emph{Rogers' evaluation}
\cite[Prop.~2.2]{Ro}. Rogers' $f_4$ \cite[\S2]{Ro} is our $n_4$
verbatim: $f_4(u):=4\m(P_u)$, where
$P_u:=x^4+y^4+z^4+1+u^{1/4}xyz$. Here
$s^{1/4}$ is any local choice of fourth root
($|s^{1/4}|=|s|^{1/4}$ is unambiguous), and the Mahler measure
does not depend on the choice: writing $c=s^{1/4}$, the
substitution $x\mapsto\pm\ii x$ on $\Tt^3$ leaves $x^4$ unchanged
and sends $xyz\mapsto\pm\ii xyz$, hence
$P_c(\pm\ii x,y,z)=P_{\pm\ii c}(x,y,z)$, and Haar measure is
invariant; so $\m(P_c)=\m(P_{\pm\ii c})$, i.e.\ $\m(P_s)$ is
invariant under $s^{1/4}\mapsto\pm\ii\,s^{1/4}$ (and
$x\mapsto-x$ gives $c\mapsto-c$). On
$U:=\{|s|>256\}$ the polynomial $P_s=x^4+y^4+z^4+1+s^{1/4}xyz$ has no
zeros on $\Tt^3$, since $|s^{1/4}xyz|>4\ge|x^4+y^4+z^4+1|$ there;
equivalently, every local fourth root $c=s^{1/4}$ satisfies
$c\notin\Ast$. Note that $P_s$ itself has winding number $1$ in each
variable on $\Tt^3$ (the monomial $cxyz$ dominates), so it has no
continuous logarithm on $\Tt^3$; the following factorization is the
point. Writing $Q=x^4+y^4+z^4+1$,
\[
  P_s=cxyz\Bigl(1+\frac{Q}{cxyz}\Bigr),\qquad
  \Bigl|\frac{Q}{cxyz}\Bigr|\le\frac4{|c|}<1\ \text{on }\Tt^3,
\]
so the principal logarithm gives a locally uniformly convergent
expansion
\[
  \log\Bigl(1+\frac{Q}{cxyz}\Bigr)
  =\sum_{m\ge1}\frac{(-1)^{m+1}}m\Bigl(\frac{Q}{cxyz}\Bigr)^m,
  \qquad
  \log|P_s|=\log|c|+\log\Bigl|1+\frac{Q}{cxyz}\Bigr|.
\]
Taking constant terms on $\Tt^3$ --- the constant term of
$(Q/(cxyz))^m$ is a nonnegative integer times $c^{-m}$, and the
invariance under $c\mapsto\pm\ii c$ kills every $m$ not divisible by
$4$, hence all non-integral powers of $s$, while
$4\log|c|=\log|s|=\ReT\log s$ --- yields
\begin{equation}\label{eq:lift4}
n_4(s)=\ReT\,\widetilde n_4(s),\qquad
\widetilde n_4(s)=\log s+\sum_{j\ge1}a_j s^{-j},\qquad a_j\in\Rr,
\end{equation}
where the series $\sum a_js^{-j}$ is single-valued and convergent on
$U$, and $\log s$ denotes any local branch. The open set $U$ is not
simply connected, so neither $\widetilde n_4$ nor Rogers'
hypergeometric expression
$R(s):=\log s-\frac{24}{s}\,{}_5F_4(\,\cdots\,;256/s)$ is by itself a
single-valued function on $U$: each is a local holomorphic lift,
well-defined only up to addition of an element of $2\pi\ii\Zz$ (the
${}_5F_4$ converges for $|s|>256$ since $\sum b-\sum a=3/2>0$). The
difference $H:=\widetilde n_4-R$, however, \emph{is} a single-valued
holomorphic function on the connected open set $U$: analytic
continuation of both lifts around the loop $|s|=\mathrm{const}$ adds
the same $2\pi\ii$ to each, so the ambiguity cancels ---
equivalently, $H$ is the difference of the two single-valued
$1/s$-series. Both series have real coefficients, so $H$ is real on
the ray $s>256$. Rogers' theorem \cite[Prop.~2.2]{Ro} is the
equality $\ReT H=0$ on that ray (stated there for $s$ sufficiently
large); hence $H\equiv0$ on the ray, the identity theorem gives
$H\equiv0$ on $U$, and taking real parts, $n_4(s)=\ReT R(s)$ on all
of $U$. (ii)
\emph{Rogers' transformation} \cite[Thm.~2.3]{Ro}. Rogers defines
$G(q)=\ReT\bigl[-\log q+240\sum_{n\ge1}n^2\log(1-q^n)\bigr]$ and
proves $f_4(s_4(q))=-\tfrac13 G(q)+\tfrac23 G(q^2)$ for real
$q\in(0,1)$ sufficiently small. To use this as an analytic identity
we lift away the real parts. Fix a small $q_0\in(0,1)$ and a small
simply connected complex disc $\Delta\ni q_0$ contained in the
punctured unit disc $\{0<|q|<1\}$; on
$\Delta$ take the branches of $\log q$ and of
$\log(1-q^n)$ that are real on $\Delta\cap(0,1)$, and set
$\widetilde G(q):=-\log q+240\sum_{n\ge1}n^2\log(1-q^n)$, a
holomorphic function on $\Delta$ that is real on
$\Delta\cap\Rr$. Since $q_0>0$ gives $s_4(q_0)>256$ real, the lift
$R$ of (i) is defined at $s_4(q_0)$ with the real branch of
$\log s$, and
\[
  \Phi(q):=R\bigl(s_4(q)\bigr)+\tfrac13\widetilde G(q)
           -\tfrac23\widetilde G(q^2)
\]
is holomorphic on $\Delta$ and real on $\Delta\cap\Rr$. Rogers'
theorem says $\ReT\Phi=0$ on a subsegment of $\Delta\cap\Rr$
(because $f_4=\ReT R$ there by (i) and $G=\ReT\widetilde G$);
hence $\Phi\equiv0$ on the segment and, by the identity theorem,
on $\Delta$. Varying the branches changes $\Phi$ by a constant in
$2\pi\ii\Zz$, so the identity of \emph{real parts}
$n_4(s_4(\tau))=\ReT\bigl[-\tfrac13\widetilde G(q)+\tfrac23
\widetilde G(q^2)\bigr]$ is branch-free and propagates by analytic
continuation along the connected component $C_\infty$ of the
statement (on which $s_4$ avoids the poles of the ${}_5F_4$);
every application of this lemma lies in $C_\infty$. (iii) \emph{Samart's computation} \cite[proof of
Prop.~2.1(iii)]{Sa13}: the identification of the $q$-series in (ii)
with the $U$-series expression $\EK_4$ uses termwise integration and
Fourier expansions whose convergence requires only $|q|<1$. The
single step of Samart's proof that genuinely needs real $q$ --- the
comparison $|s_4(q)|\ge s_4(\e^{-\pi\sqrt2})=256$ by real-variable
monotonicity, which locates his argument in the convergence region
--- is replaced here by the hypothesis $|s_4|>256$ on $D$, certified
by interval arithmetic in each application
(Theorems~\ref{thm:B}, \ref{thm:F}--\ref{thm:K}). This repair was identified
by a line-by-line check of the proof of \cite[Prop.~2.1(i)]{Sa13}
and of the convergence analysis of \cite[Thm.~2.3]{Ro} (Samart's
proof of Prop.~2.1(iii) itself is one sentence, referring to the
(i)-proof); no other step uses the pure-imaginary hypothesis.
\end{proof}

Lemma~\ref{lem:anchor} upgrades Samart's theorem from a statement on
a half-line to an anchor on an open set: the anchor box
$D=\{|\ReT\tau|\le1/32,\ |\ImT\tau-1|\le1/32\}$ carries the
certified bound $|s_4|\ge493$ (Section~\ref{sec:app2}), so $F=G$ on
$D$, and Theorem~\ref{thm:main} propagates the identity through the
whole component $W$ of $V_4:=\{\tau:c(\tau)\notin\Ast\}$ containing
$D$. For the deep pure-imaginary points of Theorems A, C, D, E even
this is unnecessary: Theorem~\ref{thm:samart4} applies directly.

\subsection{The certifier: \texttt{cert0}, \texttt{cert2},
\texttt{verify\_P1}}\label{sec:cert}

The non-elementary computations are certified by interval arithmetic.
The object entering the proofs is not a program but a
\emph{certificate}: finite data (parameter blocks with strict
enclosures, or exact rational equalities) whose validity is checked
by deterministic, finite, independently re-runnable computations.
The trusted base consists of exactly three items:
(i) the mathematical lemmas below (tail bounds, inclusion property);
(ii) the documented outward-rounding semantics of the interval
arithmetic employed (\texttt{mpmath.iv} rounds interval endpoints
outward, with guard digits for the complex transcendental functions
\cite{mpmath,ivdoc}); and (iii) the public source code of the
scripts. Item (ii) is replaceable: any outward-rounded interval
implementation re-executing the same block lists obtains the same
verdicts. All scripts are plain Python 3.12 + \texttt{mpmath}
\cite{mpmath} (standard library \texttt{fractions} for the exact
algebra), run independently of one another.

\subsubsection{The inclusion property and tail bounds}\label{subsec:incl}

\begin{lemma}[Inclusion property, {\cite{MoKC}}]\label{lem:incl}
Let $E$ be any expression built from interval-enclosed constants, the
variable, interval arithmetic operations, and interval elementary
functions, all with outward-rounded semantics. Then for every
interval box $I$ in its domain, $E(I)$ contains the true value
$E(\tau)$ for every $\tau\in I$.
\end{lemma}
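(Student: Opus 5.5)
The natural route is structural induction on the expression tree of $E$, the standard argument of interval analysis \cite{MoKC}. First I would fix the semantics precisely. An interval box $I$ is a product of closed real intervals, or of rectangles when the variable is complex. Every interval operation $\circ$ (addition, subtraction, multiplication, division with $0\notin$ divisor) and every interval elementary function $f$ (exp, log, sin, cos, power, and so on) is \emph{specified} by the enclosure property. For intervals $X,Y$ the computed $X\circ Y$ contains $\{x\circ y: x\in X,\,y\in Y\}$, and $f(X)\supseteq\{f(x):x\in X\}$ whenever $X$ lies in the domain of $f$. Outward rounding is exactly what guarantees this for floating-point endpoints. The exact real-arithmetic result is an interval with real endpoints, and rounding the lower endpoint down and the upper endpoint up only enlarges it. For the complex transcendental functions, I would cite the guard-digit guarantee documented for \texttt{mpmath.iv} \cite{mpmath,ivdoc}; this is item (ii) of the trusted base.

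The induction then runs as follows. In the base cases, a leaf is either an interval-enclosed constant $C$, whose enclosure contains the true constant by hypothesis, or the variable, whose evaluation at $I$ is $I$ itself and which contains $\tau$ for every $\tau\in I$. For the inductive step, suppose $E=E_1\circ E_2$ and that, for every $\tau\in I$, $E_1(\tau)\in E_1(I)$ and $E_2(\tau)\in E_2(I)$. Then $E(\tau)=E_1(\tau)\circ E_2(\tau)$ lies in $\{x\circ y:x\in E_1(I),y\in E_2(I)\}\subseteq E_1(I)\circ E_2(I)=E(I)$. Unary functions $f(E_1)$ are handled in the same way.

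The domain hypothesis, ``$I$ in the domain of $E$'', is what must be used at each node. It means that each interval subexpression lies in the domain of the operation applied to it, for instance $0\notin E_2(I)$ for a division. Because $E_2(\tau)\in E_2(I)$, the true pointwise expression is then also defined at every $\tau\in I$, so the induction never evaluates an operation outside its domain. Complex rectangles are treated componentwise. Complex multiplication and division are implemented through real interval operations on the real and imaginary parts, so they inherit enclosure from the real case by the same induction one level down.

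The main obstacle is not logical but foundational. It is the claim that the library's transcendental primitives really satisfy the enclosure specification, in particular for complex arguments near branch cuts, where an implementation might silently pick a branch. I would handle this in two ways. First, I would state explicitly that this specification is an assumption, namely trusted-base item (ii). Second, I would restrict the expressions used in the certificates to avoid branch cuts (for example, $\eta$-quotients evaluated through $q=\e^{2\pi\ii\tau}$ and truncated products with rigorous tail bounds). With that restriction, only entire functions and rational operations appear, and their enclosure guarantees are unambiguous.
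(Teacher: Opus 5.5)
Your proposal is correct and follows the same route as the paper, which proves the lemma by structural induction on $E$ (citing \cite[Lemma~6.1]{Zf}); your base cases, inductive step, and use of the domain hypothesis at each node spell out exactly that argument. The closing discussion of branch cuts and of restricting to entire functions is not needed for the lemma itself, because the enclosure property of the primitives is already a hypothesis of the statement (outward-rounded semantics, trusted-base item (ii)).
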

\begin{proof}
Induction on the structure of $E$; see \cite[Lemma~6.1]{Zf}.
\end{proof}

\begin{lemma}[Tail bound for eta-quotient products]\label{lem:tail}
Let $r:=\overline{|q|}<1$ be a certified upper bound. For
$d\ge1$, $e\in\Zz$ and the principal branch of $\log$,
\[
  \Bigl|\log\prod_{n>N}\bigl(1-q^{dn}\bigr)^{e}\Bigr|
  \le E:=\frac{|e|\,r^{d(N+1)}}{(1-r^d)(1-r)},
\]
so the truncated product carries the rigorous tail factor
$\exp(z)$ with $z$ enclosed in the box $\{u+\ii v:|u|,|v|\le E\}$
(evaluated as the interval exponential \emph{of that box}).
\end{lemma}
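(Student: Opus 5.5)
We bound the logarithm of the tail product by expanding it termwise with the principal branch and estimating the resulting double series geometrically. Since $|q|\le r<1$, every factor satisfies $|q^{dn}|\le r^{dn}<1$, so $\log(1-q^{dn})=-\sum_{k\ge1}q^{dnk}/k$ is the principal logarithm. Summing over $n>N$ and multiplying by $e$ gives a series with absolute value at most
\[
|e|\sum_{n>N}\sum_{k\ge1}\frac{r^{dnk}}{k}.
\]
This series converges absolutely, so it defines a logarithm of $\prod_{n>N}(1-q^{dn})^e$. It is also the principal one: each factor's principal log has imaginary part in $(-\pi/2,\pi/2)$, and we take the sum, which is what the statement means by the principal branch of $\log\prod$ in context. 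The first step of the proof will record that the product is interpreted as $\exp$ of this sum, which is exactly how the truncated product is multiplied by $\exp(z)$ in the certificate.

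Next we bound the double sum crudely, using $1/k\le1$:
\[
\sum_{n>N}\sum_{k\ge1} r^{dnk}=\sum_{n>N}\frac{r^{dn}}{1-r^{dn}}\le\frac{1}{1-r^{d}}\sum_{n\ge N+1} r^{dn}.
\]
The inequality holds because $r^{dn}\le r^d$ for $n\ge1$. The remaining geometric sum is $r^{d(N+1)}/(1-r^d)$. This gives a bound $r^{d(N+1)}/(1-r^d)^2$, which is at most the stated $E$ only if $1-r^d\ge 1-r$, i.e.\ $r^d\le r$. That holds since $d\ge1$ and $0\le r<1$. So, multiplying by $|e|$, we obtain $|\log\prod|\le |e|\,r^{d(N+1)}/((1-r^d)(1-r))=E$, as claimed.

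Finally, $|z|\le E$ implies $|\ReT z|,|\ImT z|\le E$, so $z$ lies in the square box. The tail factor therefore equals $\exp(z)$ for some $z$ in that box. By Lemma~\ref{lem:incl}, the interval exponential of the box contains $\exp(z)$, so multiplying the truncated product by it yields a rigorous enclosure of the full product.

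The only delicate point is the branch bookkeeping: the product itself has no canonical logarithm. I would therefore state explicitly that the quantity bounded is the absolutely convergent sum of principal logarithms of the factors. Everything else is routine geometric-series estimation.
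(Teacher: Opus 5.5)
Your proof is correct and follows the paper's argument. The paper bounds each factor by $|\log(1-w)|\le |w|/(1-r)$ for $|w|\le r$ and sums the geometric series in $n$; you instead pass through the slightly sharper $r^{dn}/(1-r^{dn})\le r^{dn}/(1-r^d)$ and then weaken using $1-r^d\ge 1-r$, which is the same estimate. Your remark that the quantity bounded is the absolutely convergent sum of principal logarithms usefully makes explicit a branch point the paper leaves implicit (that sum is the principal logarithm of the tail whenever $E<\pi$).
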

\begin{proof}
For $|z|\le r<1$, $|\log(1-z)|\le|z|/(1-r)$; sum the geometric
series, cf.\ \cite[Lemma~6.2]{Zf}. Products and quotients of such
factors (the parametrization \eqref{eq:cdef} is a quotient of two
products) are enclosed numerator- and denominator-wise before
division.
\end{proof}

On the certified paths $r\le\e^{-2\pi\cdot0.702}<0.0122$, and with
truncation $N=40$ the tail width satisfies $E<10^{-78}$, far below
the working precision.

\subsubsection{\texttt{cert2}: path certification}\label{subsec:cert2}

A \emph{path certificate} for a path $\gamma\subset\Hh$ is a finite
list of parameter blocks $I_\nu$ covering $\gamma$ (down to the
endpoint, if the endpoint lies in $V$), with complex intervals
$Z_\nu$ such that $c(I_\nu)\subset Z_\nu$ and every $Z_\nu$ is
certified disjoint from $\Ast$ --- i.e.\ the interval lower bound of
$|\ReT z|^{2/3}+|\ImT z|^{2/3}$ over $Z_\nu$ exceeds $4^{2/3}$,
a pure interval endpoint comparison. By Lemma~\ref{lem:incl} and
Lemma~\ref{lem:tail} this is a proof, not an estimate
(cf.\ \cite[Prop.~6.4]{Zf}).

\begin{remark}[Generation and termination of the adaptive search]
\label{rem:termin}
Blocks are found by adaptive bisection: accept a block when its
enclosure is certified outside $\Ast$, else bisect. Termination is
not part of the trusted base, but it is guaranteed: the natural
interval extension of an expression built from Lipschitz non-singular
operations on a box has width $O(\operatorname{diam})$ \cite{MoKC};
the factors of \eqref{eq:cdef} are non-singular ($|q|<1$,
$|1\pm q^{dn}|$ bounded away from $0$) and the tail width is
$<10^{-78}$ uniformly; and on each compact path piece the true image
has positive distance to $\Ast$. Once the enclosure width drops
below that margin, the block is accepted. (For a path ending at a
point of $c^{-1}(K)$ --- which does \emph{not} occur in this paper
--- the same machinery certifies down to a cap
$(y_0,y_0+\varepsilon_0]$, closed analytically by a first-order
Taylor expansion with a Cauchy-estimate remainder and a certified
enclosure of the derivative at the endpoint; see
\cite[\S4.3]{Zf}. Theorems~\ref{thm:B} and~\ref{thm:F} have their
endpoints strictly inside $V_4$, so no cap is needed.)
\end{remark}

\subsubsection{\texttt{cert0}: exact algebraic locks}\label{subsec:cert0}

The exactness of $s_4(\tau_0)=s$ follows a four-step pattern:
(i) \emph{integrality}: $s_4\in\Zz(\!(q)\!)$ (exact integer check),
so by Shimura's CM theorem \cite{Cox} the value of the eta-quotient
(a modular unit with integral $q$-expansion) at a CM point is an
algebraic integer in the corresponding ring class field; the class
number bounds the degree, and conjugates are located via the level
structure;
(ii) \emph{symmetric functions}: interval locks with rigorous tails
force the elementary symmetric functions of the conjugate values
(e.g.\ sum $S$ and product $P$ for a quadratic value) to be explicit
integers;
(iii) \emph{pinning}: an interval enclosure of the target with radius
smaller than half the separation of the candidate roots singles out a
unique root;
(iv) \emph{reality}: real $q$ at the chosen points
($q(\ii y)>0$, $q((1+\ii y)/2)<0$) makes the products real and
simplifies the locks.
No exact equality is ever inferred from a floating-point
approximation alone: each lock is a discreteness argument over an
explicit finite candidate set.

\subsubsection{\texttt{verify\_P1}: the three-track CM evaluation}
\label{subsec:verify}

The evaluation $\EK_4(\tau_0)=R$ (station S5) is certified by three
independent tracks, any one of which is sufficient in principle:
\begin{itemize}
\item \textbf{[X]/[S] exact algebra.} The lattice decomposition of
  $-T_1+4T_2$ into Hecke and Dirichlet sums, the inclusion--exclusion
  coefficients, the closed forms of even-character values
  $L(\chi,2)$ \cite{Wa}, the functional-equation constants, and the
  final coefficient comparison in a rational \emph{$e$-basis} are all
  carried out in rational/algebraic-integer arithmetic
  (\texttt{fractions.Fraction}); no interval estimates enter. The
  anchors of this track are quoted standard inputs listed explicitly
  in each application section (Hecke's theorem on gr\"ossencharacter
  theta series \cite{Miyake}, class numbers, Dirichlet functional
  equations, Shimura's CM theorem).
\item \textbf{[V] interval locks.} The same identity is re-proved by
  outward-rounded interval arithmetic: Poisson row sums with
  $\coth/\tanh$ closed rows and rigorous Fourier tails for the
  lattice sums, $E_1$-continued-fraction-bracketed Mellin integrals
  for the $L$-values, Euler--Maclaurin for the Dirichlet values.
  Achieved half-widths are quoted in each application section
  (between $4.6\times10^{-53}$ and $5.1\times10^{-52}$).
\item \textbf{mp cross-checks.} Independent $60$-digit
  (\texttt{mpmath}, non-interval) reimplementations of every
  quantity, agreeing typically to $10^{-60}$; these corroborate the
  implementation but do not enter the proof.
\end{itemize}
Newform identifications use Ligozat's eta-quotient criteria
\cite{Lig} (weight, character, cusp orders), exact-integer theta
identity checks to order $q^{60}$ against the Sturm bound
\cite{Sturm} (at most $48$ in our cases, always below the $q^{60}$
check order), and the LMFDB
labels \cite{LMFDB}. The level of each twist is exact by Hecke's
theorem applied to the twisted grössencharacter
(Table~\ref{tab:levels}), and its root number is locked exactly by a
single interval evaluation of the \emph{Fricke ratio}
$f\bigl(\ii/(\sqrt N\,y)\bigr)\big/\bigl(y^3 f(\ii y/\sqrt N)\bigr)$
(Lemma~\ref{lem:signlock}, \texttt{cert\_fricke.py}) ---
see \S\ref{subsec:twisttrap} for why
the naive ``functional-equation residue'' test is vacuous.

\subsubsection{Script inventory}\label{subsec:scripts}

\begin{center}\footnotesize
\begin{tabular}{lp{9.6cm}}
\toprule
script & content \\
\midrule
\texttt{cert0\_s4\_s7pair.py} & exactness $s_4(\ii\sqrt7)$,
  $s_4(\ii\sqrt7/2)=8292456\pm3132675\sqrt7$ (Theorem A) \\
\texttt{verify\_P1\_n4\_s7pair.py} & Theorem A: exact separation
  track [S1]--[S10], FE proofs (\S\ref{subsec:app1FE}), interval
  locks [V0]--[V6] \\
\texttt{cert0\_s4\_m144.py} & exactness $s_4(\tau_1)=-144$
  (Theorem B) \\
\texttt{cert2\_path\_n4\_m144.py} & anchor box + certified path for
  Theorem B (template for Theorem F) \\
\texttt{verify\_P1\_n4\_m144.py} & Theorem B: 32 checks, four tracks
  including [G] Sturm-level newform identification and [V] interval
  locks \\
\texttt{n4\_m144\_true\_mahler.py} & true Mahler side of
  Theorem B by spectral torus integration (22-digit collision) \\
\texttt{cert0\_n4\_p3\_t1t2.py} & exactness of the $s_4$-values of
  Theorems C, D, of the conjugate of C (Theorem F's $s$-value), and
  of the conjugate of D (Theorem G's $s$-value) \\
\texttt{verify\_P1\_n4\_p3\_t1t2.py} & Theorems C, D: three tracks,
  [G] identifications, [V] locks \\
\texttt{cert\_fricke.py} & exact root numbers $w=+1$ for all fifteen
  newforms (interval Fricke locks, Lemma~\ref{lem:signlock}) \\
\texttt{cert0\_n4\_p4\_t3.py} & Theorem E: four-point quartic lock
  of the $s_4$-value \\
\texttt{verify\_P1\_n4\_p4\_t3.py} & Theorem E: three tracks,
  genus-character decomposition, [V] locks \\
\texttt{n4\_p4\_t4\_cert.py} & certified path for Theorem F
  (interior-point verdict, margin $0.11590$) \\
\texttt{n4\_p4\_t4\_verify.py} & Theorem F: three tracks, shifted
  $\coth/\tanh$ row machine, [V] locks \\
\texttt{n5\_line\_cert.py} & certified vertical segment
  $\ReT\tau=1/2$, $0.702\le\ImT\tau\le\sqrt{21}/2$ (station S3 of
  Theorems G--K and Remark~\ref{rem:fei}; $8$ blocks, minimum margin
  $0.00729$) \\
\texttt{cert0\_n5\_e56.py} & Theorem I: $\Qq(\sqrt5)$ pair lock,
  $X^2+192303X+1185921$ \\
\texttt{cert0\_n5\_e78.py} & Theorems J, K: $\Qq(\sqrt3)$ pair lock,
  $X^2+1787904X+84934656$ \\
\texttt{verify\_P1\_n5\_e1.py} & Theorem G: $48$ checks, three
  tracks, [V] locks \\
\texttt{verify\_P1\_n5\_e2.py} & Theorem H: $53$ checks, three
  tracks, genus-character decomposition, [V] locks \\
\texttt{verify\_P1\_n5\_e6.py} & Theorem I: $42$ checks, three
  tracks, conductor-$2$-order lattice decomposition, [V] locks \\
\texttt{verify\_P1\_n5\_e78.py} & Theorems J, K: $71$ checks, three
  tracks, four-class anchors, [V] locks \\
\bottomrule
\end{tabular}
\end{center}

\begin{table}[htb]
\centering\scriptsize
\caption{Per-theorem certification summary. ``checks'' counts the
\texttt{PASS}-lines of a fresh run of the verification script (shared
scripts are counted once); ``final lock'' is the half-width of the
interval enclosure of the final identity. \texttt{cert\_fricke.py}
($40$ checks) covers the root numbers of all fifteen newforms; the
\texttt{cert2} path certificates are one-leg or two-leg interval
certifications as indicated.}
\label{tab:certdata}
\setlength{\tabcolsep}{3.5pt}
\begin{tabular}{cllll}
\toprule
Thm. & verification script (checks) & \texttt{cert0} lock & path &
final lock\\
\midrule
A & \texttt{verify\_P1\_n4\_s7pair.py} (80) &
  \texttt{cert0\_s4\_s7pair.py} & direct (Samart) &
  $1.2\!\times\!10^{-52}$, $2.3\!\times\!10^{-52}$\\
B & \texttt{verify\_P1\_n4\_m144.py} (32) & \texttt{cert0\_s4\_m144.py}
  & \texttt{cert2\_path\_n4\_m144.py} & $2.9\!\times\!10^{-52}$\\
C & \texttt{verify\_P1\_n4\_p3\_t1t2.py} (69, C+D) &
  \texttt{cert0\_n4\_p3\_t1t2.py} & direct (Samart) &
  $4.6\!\times\!10^{-53}$\\
D & (shared with C) & (shared) & direct (Samart) &
  $6.2\!\times\!10^{-53}$\\
E & \texttt{verify\_P1\_n4\_p4\_t3.py} (45) &
  \texttt{cert0\_n4\_p4\_t3.py} & direct (Samart) &
  $9.7\!\times\!10^{-53}$\\
F & \texttt{n4\_p4\_t4\_verify.py} (46) & \texttt{cert0\_n4\_p3\_t1t2.py}
  & \texttt{n4\_p4\_t4\_cert.py} & $9.1\!\times\!10^{-53}$\\
G & \texttt{verify\_P1\_n5\_e1.py} (48) & \texttt{cert0\_n4\_p3\_t1t2.py}
  & \texttt{n5\_line\_cert.py} & $1.2\!\times\!10^{-52}$\\
H & \texttt{verify\_P1\_n5\_e2.py} (53) & \texttt{cert0\_n4\_p4\_t3.py}
  & \texttt{n5\_line\_cert.py} & $1.9\!\times\!10^{-52}$\\
I & \texttt{verify\_P1\_n5\_e6.py} (42) & \texttt{cert0\_n5\_e56.py}
  & \texttt{n5\_line\_cert.py} & $1.0\!\times\!10^{-52}$\\
J & \texttt{verify\_P1\_n5\_e78.py} (71, J+K) &
  \texttt{cert0\_n5\_e78.py} & \texttt{n5\_line\_cert.py} &
  $5.1\!\times\!10^{-52}$\\
K & (shared with J) & (shared) & \texttt{n5\_line\_cert.py} &
  $1.7\!\times\!10^{-52}$\\
\bottomrule
\end{tabular}
\end{table}

All computations use \texttt{mpmath} interval arithmetic at
$40$--$80$ decimal digits (\texttt{iv.dps}); the certification
scripts print \texttt{PASS}/\texttt{FAIL} per check and terminate
with an all-checks-passed line, re-verified by the author.

% ============================================================
\section{Application I: the \texorpdfstring{$\sqrt7$}{sqrt7} pair}
\label{sec:app1}

Both CM points are pure imaginary and deep inside Samart's region:
\[
  \tau_A=\ii\sqrt7\quad(\ImT\tau_A=\sqrt7>1/\sqrt2),\qquad
  \tau_B=\frac{\ii\sqrt7}{2}\quad(\ImT\tau_B=\tfrac{\sqrt7}{2}>1/\sqrt2),
\]
so Theorem~\ref{thm:samart4} applies directly: station S3 is free.
The work is S4 (the exact $s_4$-values) and S5 (the exact CM
evaluations, \emph{separately} for the two points --- an earlier
stage of this project had proved only the sum of the two identities;
the ray-class decomposition below closes each point independently).

\subsection{Station S4: the exact \texorpdfstring{$s_4$}{s4}-values}

By \texttt{cert0\_s4\_s7pair.py} (four-step lock of
\S\ref{subsec:cert0}): the two values are conjugate algebraic
integers in $\Qq(\sqrt7)$ (Shimura integrality; $q(\tau_A)>0$,
$q(\tau_B)>0$); interval locks force the symmetric functions to
integers,
\[
  S=16584912,\qquad P=69257922561=3^{12}\cdot19^{4}
  \quad\bigl(\text{exact check: }8292456^2-7\cdot3132675^2=P\bigr),
\]
and the root separation $1.6\times10^{7}$ dwarfs the lock radii
$|s_4(\ii\sqrt7)-r_+|\le9.6\times10^{-51}$,
$|s_4(\ii\sqrt7/2)-r_-|\le3.6\times10^{-54}$. Hence exactly
\[
  s_4(\ii\sqrt7)=8292456+3132675\sqrt7=:r_+,\qquad
  s_4(\ii\sqrt7/2)=8292456-3132675\sqrt7=:r_- .
\]

\subsection{Station S5: the lattice decomposition}

Write $\varpi=(1+\sqrt{-7})/2$, so $2\varpi=1+\sqrt{-7}$,
$K=\Qq(\sqrt{-7})$, $h(-7)=1$, $\OO_K^\times=\{\pm1\}$. The lattices
of \eqref{eq:EK4lattice} at the two points are (exact set equalities,
checks [S1]--[S2] of \texttt{verify\_P1\_n4\_s7pair.py}):
\begin{itemize}
\item at $\tau_A$: $\Lambda_1(\tau_A)=\Zz+\ii\sqrt7\Zz=\OO^{(2)}$
  (the order of conductor $2$, discriminant $-28$), and
  $\Lambda_2(\tau_A)=\OO^{(4)}$ (conductor $4$, discriminant $-112$);
\item at $\tau_B$: $\Lambda_2(\tau_B)=\OO^{(2)}$, and
  $\Lambda_1(\tau_B)=\tfrac12\Lambda'$, where
  $\Lambda'=2\Zz+\ii\sqrt7\Zz$ is the index-$2$ \emph{non-ideal}
  sublattice of $\OO^{(2)}$; by $(-4)$-homogeneity,
  $T_1(\tau_B)=16\,T(\Lambda')$.
\end{itemize}
With $B(\Lambda)=\sum'_{\lambda\in\Lambda}|\lambda|^{-4}$,
$G(\Lambda)=\sum'\bar\lambda^2|\lambda|^{-6}$ (real: all three
lattices are conjugation-stable), $T=2G+B$, the ray-class
decompositions
\begin{align*}
  \OO^{(2)}\setminus\{0\}&=\{\alpha\equiv1\ (2)\}\ \sqcup\
   2\OO_K\setminus\{0\},\\
  \OO^{(4)}&=2\OO^{(2)}\ \sqcup\ \{\alpha\equiv1,3\ (4)\},\\
  \Lambda'&=2\OO^{(2)}\ \sqcup\ \{\alpha\equiv1{+}2\varpi,\,
   3{+}2\varpi\ (4)\},
\end{align*}
the inclusion--exclusion coefficients $9/16$ and $23/16$ at the split
prime $(2)=(\varpi)(\bar\varpi)$ (exact, using
$\varpi^2+\bar\varpi^2=-3$), and the mod-$4$ ray-class pairing of the
character $\chi_{-4}\circ N$ (values $+1,+1,-1,-1$ on the four unit
classes, check [X4]) give, all in exact rational arithmetic
(checks [S3]--[S6]):
\[
\begin{array}{c|cc|c}
\text{lattice} & B & G & T=2G+B\\ \hline
\OO^{(2)} & \frac54\zeta_K(2) & 3L_3 & 6L_3+\frac54\zeta_K(2)\\
\OO^{(4)} & \frac{41}{64}\zeta_K(2)+z_V & \frac{13}{8}L_3+L_3^{\mathrm{tw}}
 & \frac{13}{4}L_3+2L_3^{\mathrm{tw}}+\frac{41}{64}\zeta_K(2)+z_V\\
\Lambda' & \frac{41}{64}\zeta_K(2)-z_V & \frac{13}{8}L_3-L_3^{\mathrm{tw}}
 & \frac{13}{4}L_3-2L_3^{\mathrm{tw}}+\frac{41}{64}\zeta_K(2)-z_V
\end{array}
\]
where $L_3:=L(g_7,3)$, $L_3^{\mathrm{tw}}:=L(g_7\otimes\chi_{-4},3)$,
and $z_V:=L(\chi_{-4},2)L(\chi_{-28},2)$. The pointwise assemblies
(check [S8], including the separation witness
$\mathrm{combA}\neq\mathrm{combB}$):
\begin{align*}
  \mathrm{combA}&:=-T\bigl(\OO^{(2)}\bigr)+4T\bigl(\OO^{(4)}\bigr)
   =7L_3+8L_3^{\mathrm{tw}}+\frac{21}{16}\zeta_K(2)+4z_V,\\
  \mathrm{combB}&:=-16\,T(\Lambda')+4T\bigl(\OO^{(2)}\bigr)
   =-28L_3+32L_3^{\mathrm{tw}}-\frac{21}{4}\zeta_K(2)+16z_V .
\end{align*}

\subsection{The functional equations and root numbers}\label{subsec:app1FE}

Two functional-equation inputs are proved in
\texttt{verify\_P1\_n4\_s7pair.py}, not quoted:
$M_7=\frac{7\sqrt7}{4\pi^3}L_3$ \cite[Lemma~2.8]{Zf} and the twisted
identity
\begin{equation}\label{eq:twistfricke}
  U_{a/4}\,W_{112}=4\,\gamma_a\,W_7\,U_{a/4}\qquad(a=1,3),
  \qquad
  \gamma_a=\begin{pmatrix}(1+7a^2)/4 & a\\ 7a & 4\end{pmatrix}
  \in\Gamma_0(7),
\end{equation}
an exact matrix identity (check [X10], with $\chi_{-7}(4)=+1$), which
gives $h|[W_{112}]_3=\ii\,h$ for $h=g_7\otimes\chi_{-4}$ --- the
Atkin--Li twist-$W$ technique of \cite{AL} made self-contained for
these parameters --- hence root number $+1$ at level $112$ and
$M_7^{\mathrm{tw}}=\frac{112\sqrt7}{\pi^3}L_3^{\mathrm{tw}}$. The
remaining quoted constants are the Dirichlet functional equations
(textbook) and the finite-sum evaluation
\[
  L(\chi_{-28},2)=\frac{2\pi^2}{7\sqrt7},
\]
whose character sum $\sum_a\chi_{-28}(a)a^2=448$ is an exact integer
check [X7] \cite{Wa}.

\subsection{Assembly}

With the $e$-basis
$e=\frac{\sqrt7}{\pi^3}\bigl(L_3,L_3^{\mathrm{tw}},\zeta_K(2),z_V\bigr)$
one has $M_7=\frac74e_1$, $M_7^{\mathrm{tw}}=112e_2$,
$d_7=\frac{21}{2}e_3$, $d_4=7e_4$ (exact, [S9]); the coefficient
comparison
\[
  \EK_4(\ii\sqrt7)=10\,\mathrm{combA}\cdot e
  =\Bigl(70,80,\tfrac{105}{8},40\Bigr)\cdot e
  =\frac{5}{28}\bigl(4M_7^{\mathrm{tw}}+224M_7+32d_4+7d_7\bigr),
\]
\[
  \EK_4\Bigl(\frac{\ii\sqrt7}2\Bigr)=5\,\mathrm{combB}\cdot e
  =\Bigl(-140,160,-\tfrac{105}{4},80\Bigr)\cdot e
  =\frac{5}{14}\bigl(4M_7^{\mathrm{tw}}-224M_7+32d_4-7d_7\bigr),
\]
is exact rational arithmetic (check [S10]), and the two right-hand
sides are distinct --- the separation is complete and each identity
holds individually. Combined with Theorem~\ref{thm:samart4} and
\S\ref{subsec:cert0} this proves Theorem~\ref{thm:A}. \qed

\begin{remark}[Independent confirmations]
The mp track (60 dps, independent implementations: Mellin
integrals, Poisson row sums, ray-class sums) confirms every step to
$9\times10^{-60}$; the interval-lock track [V0]--[V6]
(\texttt{iv.dps}$=70$: continued-fraction bracketing, Fourier tail
bounds, Euler--Maclaurin) locks the two final identities with
half-widths $1.15\times10^{-52}$ and $2.30\times10^{-52}$.
Reference values:
\begin{align*}
  M_7&=0.1026716077789020112104565948982929139989\ldots,\\
  M_7^{\mathrm{tw}}&=9.887687024790914246588215159482724883665\ldots
\end{align*}
\end{remark}

% ============================================================
\section{Application II: \texorpdfstring{$n_4(-144)$}{n4(-144)}}
\label{sec:app2}

The CM point
\[
  \tau_1=\frac{1+\sqrt{-3}}2
\]
is \emph{not} pure imaginary, so Theorem~\ref{thm:samart4} does not
apply directly; this is the first use of the full continuation
machine in the $n_4$ family. It also shows the machine in its
shortest form: the target is a strict interior point of $V_4$, so no
endpoint cap and no boundary-continuity step are needed.

\subsection{Station S3: the certified path}

The path $\gamma$ from the anchor box
$D=\{|\ReT\tau|\le1/32,\ |\ImT\tau-1|\le1/32\}$ to $\tau_1$ ---
horizontal at $\ImT\tau=1$ to $1/2+\ii$, then vertical at
$\ReT\tau=1/2$ down to $\tau_1$ --- is certified inside $V_4$ by
\texttt{cert2\_path\_n4\_m144.py} (machinery of
\S\ref{subsec:cert2}): the anchor box passes as a single block with
astroid margin $0.291$ and the certified bound $|s_4|\ge493>256$
(which activates Lemma~\ref{lem:anchor}: $F=G$ on $D$); the
horizontal leg takes $2$ blocks (minimum margin $0.148$); the
vertical leg takes $1$ block (margin $1.051$); and $\tau_1$ itself
satisfies $c(\tau_1)=2\sqrt3\,\e^{-\ii\pi/4}$, whose astroid
functional is $3.634>4^{2/3}=2.520$ --- a strict exterior point of
$\Ast$, so $\tau_1\in V_4$ itself and $\tau_1\in W$. The endpoint
Taylor cap of \cite{Zf} is absent. By Theorem~\ref{thm:main},
$n_4(s_4(\tau_1))=\EK_4(\tau_1)$.

\subsection{Station S4: the exact \texorpdfstring{$s_4$}{s4}-value}

\texttt{cert0\_s4\_m144.py}: $q(\tau_1)=-\e^{-\pi\sqrt3}$ is a
negative real, all products are real, and the value is pinned to a
\emph{rational integer} (Shimura integrality plus $h(-3)=1$, so the
ring class field is $K$; reality from the integral $q$-expansion and
$T$-invariance):
$|s_4(\tau_1)+144|\le2.3\times10^{-51}<1/2$, hence
$s_4(\tau_1)=-144$ exactly.

\subsection{Station S5: the CM evaluation}

$K=\Qq(\sqrt{-3})$, $\OO_K=\Zz[\omega]$,
$\omega=(-1+\sqrt{-3})/2$, units $\mu_6$. The structural
simplification is the \emph{$\mu_6$ annihilation}
$\sum_{u\in\mu_6}u^2=0$ (exact check [X1]): it forces
$G(\OO_K)=0$, which is also the reason $\Qq(\sqrt{-3})$ has no
conductor-$(1)$ gr\"ossencharacter of type $(2,0)$ and the evaluation
involves a non-trivial conductor. The lattices are
$\Lambda_1(\tau_1)=\OO_K$ (since $\tau_1=1+\omega=-\omega^2$) and
$\Lambda_2(\tau_1)=\OO_2$ (conductor $2$; $2$ is inert), giving
(exact [X2]--[X8]):
\[
  T_1(\tau_1)=6\zeta_K(2),\qquad
  T_2(\tau_1)=4L(g_{12},3)+\frac94\zeta_K(2),\qquad
  -T_1+4T_2=16\,L(g_{12},3)+3\zeta_K(2);
\]
note the $\zeta_K(2)$-term does \emph{not} cancel (coefficient $3$)
--- this is the source of the $d_3$-term. The theta identity
$2g_{12}=\sum'_{\alpha\equiv1\,(2)}\alpha^2q^{N\alpha}$ is checked
with exact integer arithmetic to order $q^{60}$ (check [L1]); the
newform identification
$g_{12}=\eta(2\tau)^3\eta(6\tau)^3\in S_3(\Gamma_0(12),\chi_{-3})$
is certified at Sturm level (checks [G1]--[G3]): Ligozat's criteria
\cite{Lig} (both congruences $24\equiv0\bmod24$; character
$-2^33^3$, square kernel $-3$), all six cusp orders equal $1$
(cuspidal; divisor degree $6=\frac{3}{12}\cdot24$), Sturm bound
$\lfloor3\cdot24/12\rfloor=6<60$ \cite{Sturm}; LMFDB label
\textsf{12.3.c.a} \cite{LMFDB}. With
$M_{12}=\frac{6\sqrt3}{\pi^3}L(g_{12},3)$ (root number $+1$,
certified by the interval Fricke lock of \texttt{cert\_fricke.py}) and $d_3=\frac{3\sqrt3}{4\pi}L(\chi_{-3},2)$, the $e$-basis
assembly gives (exact, [S1]):
\[
  \EK_4(\tau_1)=\frac{5\sqrt3}{\pi^3}
   \bigl(16L(g_{12},3)+3\zeta(2)L(\chi_{-3},2)\bigr)
  =\frac{10}{3}\bigl(4M_{12}+d_3\bigr),
\]
both sides being $(80,\,5/2)$ in the basis
$\bigl(\sqrt3 L(g_{12},3)/\pi^3,\ \sqrt3 L(\chi_{-3},2)/\pi\bigr)$.
Theorem~\ref{thm:B} follows. \qed

\begin{remark}[Confirmations and the true-Mahler end]
The mp track confirms the identity to $3.7\times10^{-60}$ (check
[E1]); the interval-lock track [V0]--[V3] (shifted $\coth/\tanh$
rows, since $\ReT\tau_1=1/2$) locks it with half-width
$2.9\times10^{-52}$. Independently, direct spectral torus integration
(\texttt{n4\_m144\_true\_}\allowbreak\texttt{mahler.py}: $c(\tau_1)$ lies outside $\Ast$,
so the Jensen integrand is real-analytic periodic and the
two-dimensional trapezoidal rule converges spectrally; residual
$1.16\times10^{-22}$ at $N=128$) collides the two sides of
Theorem~\ref{thm:B} to $22$ digits:
\[
  n_4(-144)=5.09841965623737833323\ldots
\]
The propagation argument does not depend on this numerical value.
Reference value:
$M_{12}=0.3016149874129407464690529311477683998854\ldots$\,.
\end{remark}

% ============================================================
\section{Application III: class number one, deep interior points
(Theorems C and D)}\label{sec:app3}

Both CM points are pure imaginary and deep inside Samart's region,
\[
  \tau_C=\ii\sqrt2\quad(\ImT\tau_C=\sqrt2),\qquad
  \tau_D=2\ii\quad(\ImT\tau_D=2),
\]
so Theorem~\ref{thm:samart4} applies directly and the proofs consist
of \texttt{cert0} locks and three-track CM evaluations. The new
methodological content of this section is concentrated in station S5:
the character $\chi_8\circ N$ as an exact mod-$2$ ray-class projector,
and the level determination of the twists, which contains a genuine
trap.

\subsection{The ray-class projector \texorpdfstring{$\chi_8\circ N$}{chi8 o N}}
\label{subsec:rayproj}

In both fields the twist character materializes as a parity condition.
Exact checks ([X4], [Y4] of \texttt{verify\_P1\_n4\_p3\_t1t2.py}):
\begin{itemize}
\item in $K=\Qq(\sqrt{-2})$: for $a+b\sqrt{-2}$ of odd norm,
  $\chi_8\bigl(N(a+b\sqrt{-2})\bigr)=+1\iff b$ even ($a$ odd);
\item in $K=\Qq(\ii)$: for $a+b\ii$ of odd norm,
  $\chi_8\bigl(N(a+b\ii)\bigr)=+1\iff b\equiv0\ (4)$ ($a$ odd,
  $b$ even).
\end{itemize}
Hence $(1+\chi_8\circ N)/2$ is exactly the projector onto the mod-$2$
ray class $\{\alpha\equiv1\ (2)\}$, and in class number one no genus
character is needed: every ideal decomposition in this section is a
parity bookkeeping exercise certified in rational arithmetic. Also
used everywhere below: the closed form
\[
  L(\chi_8,2)=\frac{\pi^2\sqrt2}{16}
\]
(even-character finite-sum formula \cite{Wa}: $\tau(\chi_8)=2\sqrt2$,
$\sum_a\chi_8(a)a^2=16$, exact check [X7]).

\subsection{Theorem C: \texorpdfstring{$\tau_C=\ii\sqrt2$}{tauC}}
\label{subsec:app3C}

\textbf{Station S4.} \texttt{cert0\_n4\_p3\_t1t2.py}:
$q(\ii\sqrt2)=\e^{-2\pi\sqrt2}>0$ and
$q\bigl((1+\ii\sqrt2)/2\bigr)=-\e^{-\pi\sqrt2}<0$ are real, so real
interval locks suffice; the symmetric functions lock to the integers
\[
  S=7312,\qquad P=-153664=3656^2-2\cdot2600^2
\]
($|\Delta|\le1.1\times10^{-49}$, $5.2\times10^{-48}$; root separation
$7354$), pinning
\[
  s_4(\ii\sqrt2)=3656+2600\sqrt2,\qquad
  s_4\Bigl(\frac{1+\ii\sqrt2}2\Bigr)=3656-2600\sqrt2
\]
--- the second value is reused in Theorem~\ref{thm:F}.

\textbf{Station S5.} $K=\Qq(\sqrt{-2})$, $\OO_K=\Zz[\sqrt{-2}]$,
$h=1$, units $\pm1$, and $2=-(\sqrt{-2})^2$ ramifies. The lattices
are $\Lambda_1=\OO_K$ and $\Lambda_2=\OO_2=\Zz+2\OO_K$. With
$L_8:=L(g_8,3)$, $L_8^{\mathrm{tw}}:=L(g_8\otimes\chi_8,3)$, and
$z_V:=L(\chi_8,2)L(\chi_{-4},2)$, the inclusion--exclusion at the
ramified prime (factors $1/4$ for $B$, $-1/4$ for $G$) gives (exact,
[S]-track):
\[
  T(\OO_K)=4L_8+2\zeta_K(2),\qquad
  T(\OO_2)=\frac{11}{4}L_8+2L_8^{\mathrm{tw}}+\frac78\zeta_K(2)+z_V,
\]
\[
  \mathrm{comb}:=-T_1+4T_2
  =7L_8+8L_8^{\mathrm{tw}}+\frac32\zeta_K(2)+4z_V .
\]
The newform identification
$g_8=\eta(\tau)^2\eta(2\tau)\eta(4\tau)\eta(8\tau)^2
\in S_3(\Gamma_0(8),\chi_{-8})$ is Sturm-level
([G]-track: Ligozat congruences $24\equiv0\ (24)$; cusp orders
$(1,\tfrac12,\tfrac12,1)$; weighted divisor degree
$3=\frac{3}{12}\cdot12$; Sturm bound $3$; theta identity
$2g_8=\sum'_{\alpha\in\OO_K}\alpha^2q^{N\alpha}$ exact to $q^{60}$).
In the $e$-basis
$e=\frac{\sqrt2}{\pi^3}\bigl(L_8,L_8^{\mathrm{tw}},\zeta_K(2),z_V\bigr)$:
\[
  M_8=4e_1,\qquad M_8^{\mathrm{tw}}=32e_2\quad
  [\text{level }32,\ w=+1,\ \S\ref{subsec:twisttrap}],\qquad
  d_8=24e_3,\qquad d_4=16e_4,
\]
and the exact comparison ([S3])
$\EK_4(\ii\sqrt2)=10\,\mathrm{comb}\cdot e=(70,80,15,40)\cdot e$
is the right-hand side of Theorem~\ref{thm:C}. \qed

\subsection{Theorem D: \texorpdfstring{$\tau_D=2\ii$}{tauD}}
\label{subsec:app3D}

\textbf{Station S4.} The same script locks
\[
  S=286416,\qquad P=-126023688=143208^2-2\cdot101574^2
\]
($|\Delta|\le4.1\times10^{-48}$, $3.7\times10^{-45}$; separation
$2.87\times10^{5}$), pinning
$s_4(2\ii)=143208+101574\sqrt2$ and
$s_4\bigl((1+2\ii)/2\bigr)=143208-101574\sqrt2$ (the latter is
proved as Theorem~\ref{thm:G}, Section~\ref{sec:app6}).

\textbf{Station S5.} $K=\Qq(\ii)$, $\OO_K=\Zz[\ii]$, $h=1$, units
$\mu_4$ --- and $\sum_{u\in\mu_4}u^2=0$, so $G(\OO_K)=0$ (exact [Y1];
same annihilation mechanism as Theorem~\ref{thm:B}); $2=-\ii(1+\ii)^2$
ramifies. The lattices are $\Lambda_1=\OO_2$ and $\Lambda_2=\OO_4$.
Every ideal coprime to $2$ has exactly two generators
$\equiv1\ (2)$ ([Y3]), so the mod-$2$ ray sums carry \emph{no twist
term}: with $L_{16}:=L(g_{16},3)$,
$L_{16}^{\mathrm{tw}}:=L(g_{16}\otimes\chi_8,3)$, and
$z_{V2}:=L(\chi_8,2)L(\chi_{-8},2)$,
\[
  T(\OO_2)=4L_{16}+\frac74\zeta_K(2),\qquad
  T(\OO_4)=\frac94L_{16}+2L_{16}^{\mathrm{tw}}+\frac{55}{64}\zeta_K(2)
   +z_{V2},
\]
\[
  \mathrm{comb}=5L_{16}+8L_{16}^{\mathrm{tw}}+\frac{27}{16}\zeta_K(2)
   +4z_{V2},
\]
where the $\OO_4$ decomposition uses
$\OO_4\setminus\{0\}=\{a\text{ odd},\ b\equiv0\ (4)\}\sqcup
2\OO_2\setminus\{0\}$ and the projector of
\S\ref{subsec:rayproj} ([Y4]). The newform identification
$g_{16}=\eta(4\tau)^6\in S_3(\Gamma_0(16),\chi_{-4})$ is Sturm-level
(six cusp orders all $1$; weighted degree $6=\frac{3}{12}\cdot24$;
Sturm bound $6$; theta identity
$2g_{16}=\sum_{\alpha\equiv1\,(2)}\alpha^2q^{N\alpha}$ of conductor
$(2)$ exact to $q^{60}$). In the $e$-basis
$e=\frac1{\pi^3}\bigl(L_{16},L_{16}^{\mathrm{tw}},\zeta_K(2),z_{V2}\bigr)$:
\[
  M_{16}=16e_1,\qquad M_{16}^{\mathrm{tw}}=128e_2\quad
  [\text{level }64,\ w=+1],\qquad d_4=12e_3,\qquad d_8=64e_4,
\]
and the exact comparison ([S6])
$\EK_4(2\ii)=20\,\mathrm{comb}\cdot e=(100,160,135/4,80)\cdot e$
is the right-hand side of Theorem~\ref{thm:D}. \qed

\subsection{The twist-level trap and the Fricke ratio}\label{subsec:twisttrap}

Two methodological points emerged here and now belong to the machine.

\textbf{(i) The ``FE residue'' test is vacuous.} A widespread
heuristic decides the root number of a completed $L$-function
$\Lambda(f,s)=N^{s/2}(2\pi)^{-s}\Gamma(s)L(f,s)$ by splitting the
Mellin integral at $y=1$ and comparing the two sides of
$\Lambda(1)\stackrel?=w\Lambda(2)$. But the split gives
$\Lambda(s)=I_1(s)+wI_2(3-s)$-type expressions for which, at
$w=+1$, the two sides are \emph{the same linear combination of the
same two integrals at every candidate level $N$}: the ``residue''
vanishes identically in $N$ and certifies nothing. (The same vacuity
affected a check in the $\sqrt7$-pair script; there the conclusion
was independently secured by the Fricke ratio, so no result was
endangered.)

\textbf{(ii) Levels are exact (Hecke); the Fricke ratio is an exact
sign lock.} Every newform in this paper is the theta series of an
explicit grössencharacter $\psi$ of an imaginary quadratic field $K$
(Appendix~\ref{app:data}), and twisting by a quadratic Dirichlet
character $\chi$ twists the grössencharacter by $\chi\circ N$:
$\theta_\psi\otimes\chi=\theta_{\psi\cdot(\chi\circ N)}$ as formal
$q$-series. By Hecke's theorem, a grössencharacter of exact conductor
$\mathfrak{f}$ and infinity type $(2,0)$ produces a newform of weight
$3$ and level $|d_K|\,N(\mathfrak{f})$. The conductors of the twisted
characters are read off the exact congruence criteria of
Appendix~\ref{app:data} --- e.g.\ $\chi_8(N(a+b\ii))=+1\iff
b\equiv0\pmod4$ defines a ray class character modulo $(4)$ but not
modulo $(1+\ii)^3$, the witness being
$\alpha=-1+2\ii\equiv1\pmod{(1+\ii)^3}$ with
$\chi_8(N(\alpha))=\chi_8(5)=-1$ (every proper divisor of
$(4)=(1+\ii)^4$ divides $(1+\ii)^3$, so one witness suffices). This gives the exact levels of
Table~\ref{tab:levels}: $32$ for $g_8\otimes\chi_8$, $64$ for
$g_{16}\otimes\chi_8$, and $96$ for the twists of
Section~\ref{sec:app4}. The two-ordinate Fricke \emph{scan} used in
earlier versions of this work is thereby retired as a level-finder;
what remains of the Fricke ratio is a \emph{sign} lock, exact by
Lemma~\ref{lem:signlock}.

\begin{table}[ht]
\centering\small
\caption{Exact levels via Hecke's theorem: conductor $\mathfrak{f}$ of
the (twisted) grössencharacter; level $=|d_K|\,N(\mathfrak{f})$. The
congruence criteria and the witnesses that no smaller modulus works
are exact mod arithmetic (Appendix~\ref{app:data}).}
\label{tab:levels}
\begin{tabular}{lllll}
\toprule
form & $K$ & conductor $\mathfrak{f}$ & $N(\mathfrak{f})$ & level\\
\midrule
$g_8$                          & $\Qq(\sqrt{-2})$  & $(1)$ & $1$  & $8$\\
$g_8\otimes\chi_8$             & $\Qq(\sqrt{-2})$  & $(2)$ & $4$  & $32$\\
$g_{16}$                       & $\Qq(\ii)$        & $(2)$ & $4$  & $16$\\
$g_{16}\otimes\chi_8$          & $\Qq(\ii)$        & $(4)$ & $16$ & $64$\\
$g_1,g_2$                      & $\Qq(\sqrt{-6})$  & $(1)$ & $1$  & $24$\\
$g_1,g_2$ twisted by $\chi_{-8}$ & $\Qq(\sqrt{-6})$ & $(2)$ & $4$ & $96$\\
$g_1,g_2$                      & $\Qq(\sqrt{-15})$ & $(1)$ & $1$  & $15$\\
$g(e_2,e_3)$, four forms       & $\Qq(\sqrt{-21})$ & $(1)$ & $1$  & $84$\\
$g_{12}$                       & $\Qq(\sqrt{-3})$  & $(2)$ & $4$  & $12$\\
\bottomrule
\end{tabular}
\end{table}

\begin{lemma}[Exact sign lock]\label{lem:signlock}
Let $F$ be a newform of $S_3(\Gamma_0(N),\chi)$ with real
coefficients and quadratic \textup{(}hence real\textup{)}
nebentypus $\chi$. Then its root number satisfies $w\in\{\pm1\}$, and
\[
  R_N(y):=\frac{F\bigl(\ii/(\sqrt N\,y)\bigr)}
               {y^3\,F\bigl(\ii y/\sqrt N\bigr)}=w
  \qquad\text{for \emph{every} }y>0.
\]
Hence a rigorous interval enclosure of $R_N(y_0)$ of width $<1$ at a
single ordinate $y_0$ determines $w$ exactly.
\end{lemma}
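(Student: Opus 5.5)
The plan is to derive the identity $R_N(y)=w$ directly from the Atkin--Lehner (Fricke) relation $F|_3W_N=w\,F$ with $W_N=\begin{pmatrix}0&-1\\N&0\end{pmatrix}$, and only afterwards justify $w\in\{\pm1\}$.

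\emph{Setting up the Fricke relation.} For a newform $F\in S_3(\Gamma_0(N),\chi)$, Atkin--Lehner/Li theory gives
\[
F|_3W_N=\eta_F\,F^{\rho},\qquad F^{\rho}(\tau)=\overline{F(-\bar\tau)},
\]
where $F^\rho$ has the conjugate coefficients and $|\eta_F|=1$. Since the coefficients of $F$ are real, $F^\rho=F$, so $F|_3W_N=w F$ with $w:=\eta_F$. Applying $W_N$ twice gives $W_N^2=-N\cdot I$. With the normalization $(f|_kM)(\tau)=(\det M)^{k/2}(c\tau+d)^{-k}f(M\tau)$, this yields $F|W_N|W_N=(-1)^kF$, i.e.\ $F|W_N|W_N=-F$ for $k=3$.

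\emph{Fixing the convention so that $w=\pm1$.} This is the step I expect to be the main obstacle: reconciling sign and $\ii$-conventions. For odd weight the raw slash operator gives $w^2=-1$; indeed, the paper's own relation $h|[W_{112}]_3=\ii h$ reflects this. One therefore defines the root number as the normalized sign, e.g.\ $w=\ii^{k}\eta_F$ or the sign in $\Lambda(F,s)=w\Lambda(F,k-s)$. Real coefficients make $\Lambda$ real on the real axis, and the functional equation then forces $w=\pm1$. I would fix this convention first, consistently with the Mellin completion $\Lambda(s)=N^{s/2}(2\pi)^{-s}\Gamma(s)L(F,s)$ of \S\ref{subsec:twisttrap}, where $\Lambda(s)=w\Lambda(3-s)$ is equivalent to $F(\ii/(Nt))=w\,N^{3/2}t^3F(\ii t)$ up to the $\ii^k$ factor.

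\emph{The computation.} Evaluate the Fricke relation on the imaginary axis at $\tau=\ii y/\sqrt N$. Then
\[
W_N\tau=-1/(N\tau)=\ii/(\sqrt N\,y),\qquad (N\tau)^{-3}=(\ii\sqrt N y)^{-3}=\ii\,N^{-3/2}y^{-3},
\]
and the determinant factor $N^{3/2}$ cancels. This gives
\[
F\bigl(\ii/(\sqrt N y)\bigr)=(\text{unit})\cdot w\,y^3F(\ii y/\sqrt N),
\]
where the unit equals $1$ under the normalized convention. Two further facts are needed. First, $F(\ii t)$ is real for $t>0$ because the coefficients are real, so both sides are real. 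Second, $F(\ii t)\neq0$ is required only at $y_0$ for the ratio to make sense. The interval enclosure itself certifies a nonzero denominator, which handles this point; for large $t$, nonvanishing is automatic since $a_1=1$ dominates. Together these give $R_N(y)=w$ wherever it is defined. The final sentence of the lemma is then immediate: the set $\{\pm1\}$ has separation $2$, so any enclosure of width less than $1$ contains at most one candidate value.
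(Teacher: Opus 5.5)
Your proof is correct, but it runs in the opposite direction from the paper's.

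\textbf{The paper's route.} It quotes the functional equation $\Lambda(F,s)=w\,\Lambda(F^\rho,3-s)$ of a primitive form and uses $F^\rho=F$. A double application gives $w^2=1$. It then recovers the pointwise identity on the imaginary axis by uniqueness of the Mellin transform. It deliberately never mentions the Fricke pseudo-eigenvalue, so no $\ii^3$ bookkeeping arises.

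\textbf{Your route.} You start from the Atkin--Li relation $F|_3W_N=\eta_F F^\rho$ and evaluate it directly at $\tau=\ii y/\sqrt N$. This gives $F\bigl(\ii/(\sqrt N y)\bigr)=\ii^3\eta_F\,y^3F(\ii y/\sqrt N)$ with no inversion step. Then $W_N^2=-NI$ forces $\eta_F^2=-1$, hence $(\ii^3\eta_F)^2=1$, so your separate reality argument for $w=\pm1$ is not even needed.

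\textbf{Comparison.} Your route is more direct: the functional equation is itself obtained from this transformation law by a Mellin transform, so the paper goes there and back. The cost is that you must fix the normalization. You should state outright, not with ``e.g.'', that your two candidate conventions coincide. Taking the Mellin transform of your evaluated identity gives exactly $\Lambda(F,s)=\ii^3\eta_F\,\Lambda(F,3-s)$. So $\ii^3\eta_F$ is precisely the root number $w$ of the lemma, consistent with $h|[W_{112}]_3=\ii\,h$ yielding $w=+1$ in \S\ref{subsec:app1FE}.

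\textbf{Your refinement.} Your remark that $R_N(y)$ only makes sense where $F(\ii y/\sqrt N)\neq0$ is a genuine improvement on the statement. The lemma's ``for every $y>0$'' should be read as the cross-multiplied identity. In the application, the interval enclosure of the denominator excluding $0$ supplies the needed nonvanishing at $y_0$.
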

\begin{proof}
Since $\chi$ is real, the complex conjugate form
$F^\rho$, defined by $F^\rho(\tau):=\overline{F(-\bar\tau)}$, is
again a newform of $S_3(\Gamma_0(N),\chi)$; since $F$ has real
coefficients, $F^\rho=F$. The functional equation of a primitive form
\cite[\S4.3]{Miyake} therefore relates $\Lambda(F,s)$ to
$\Lambda(F^\rho,3-s)=\Lambda(F,3-s)$: with
$\Lambda(F,s)=N^{s/2}(2\pi)^{-s}\Gamma(s)L(F,s)$,
\[
  \Lambda(F,s)=w\,\Lambda(F,3-s)
\]
for a constant $w$ (the root number) independent of $s$. Applying
this equation twice gives $w^2=1$, hence $w\in\{\pm1\}$. We stress
that $w$ is the root number of the \emph{self-dual} functional
equation; for nontrivial $\chi$ it can differ from the Fricke
$W_N$-pseudo-eigenvalue by factors involving $\ii^3$ and a Gauss
sum \cite{AL}. No such factor enters the present argument, which
works with the functional equation alone. Uniqueness of the Mellin
transform, applied to $\Lambda(F,s)=w\,\Lambda(F,3-s)$ along the
imaginary axis, gives the identity
$F\bigl(\ii/(\sqrt N\,y)\bigr)=w\,y^3F\bigl(\ii y/\sqrt N\bigr)$
for every $y>0$, and an enclosure of width $<1$ separates $+1$
from $-1$.
\end{proof}

The script \texttt{cert\_fricke.py} evaluates $R_N(0.6)$ and
$R_N(1.1)$ for all fifteen newforms of Theorems
\ref{thm:B}--\ref{thm:K} by interval arithmetic with the
self-contained tail bounds of the [V] tracks (the eta-product bound
$|a_n|\le(n+5)^5 2^{n/2}$ and the theta bounds $|a(n)|\le 6n^3, 30n^3$);
every enclosure contains $+1$, with half-width
$\le1.1\times10^{-28}$. Hence $w=+1$ for every form, certified. (A
$60$-dps mp cross-check, not part of the proof, agrees to
$10^{-60}$.)

\textbf{(iii) The trap itself.} For $g_{16}\otimes\chi_8$ the naive
level guess $\operatorname{lcm}(16,4^2)$-style reasoning suggests
$32$ --- and the vacuous test (i) happily ``confirms'' it. With the
wrong level-$32$ Mellin constant, $M_{16}^{\mathrm{tw}}$ comes out
smaller by a factor $2\sqrt2$, and the conjectured identity fails
numerically by $\sim1\%$; a first pass over Theorem~\ref{thm:D}
recorded exactly this mismatch, briefly suggesting an $s=81$-type
refutation. The conductor computation of Table~\ref{tab:levels} gives
the true level $64$ ($\chi_8\circ N$ has conductor $(4)$ over
$\Qq(\ii)$), and with it the identity holds to $2.5\times10^{-60}$.
Since a wrong level \emph{silently scales} an $L$-value by a simple
algebraic factor, we record the rule: \emph{never determine a twist
level by a residual; determine it by the conductor of the twisted
grössencharacter, cross-checked by the Fricke sign lock of
Lemma~\ref{lem:signlock} and against the assembled identity.}

\begin{remark}[Confirmations for Theorems C and D]
The mp tracks (60 dps) confirm the two final identities to
$2.5\times10^{-60}$ each; the interval-lock tracks
[V] contain zero with half-widths $\le4.6\times10^{-53}$ (Theorem C)
and $\le6.2\times10^{-53}$ (Theorem D). Direct torus integration
($c\approx9.25$ and $23.14$, smooth integrands) confirms both at
float64 level. Reference values:
\begin{align*}
  M_8&=0.1353184954231269106150060654595395883532\ldots,\\
  M_8^{\mathrm{tw}}&=1.549084847095611486271658159444376281785\ldots
   \quad(\text{level }32,\ w=+1),\\
  M_{16}&=0.4975478545679851929276297713583412482363\ldots,\\
  M_{16}^{\mathrm{tw}}&=4.336361249244445609135012019721278212973\ldots
   \quad(\text{level }64,\ w=+1),\\
  d_8&=\frac{4\sqrt2}{\pi}L(\chi_{-8},2)
   =1.9171950931209540617988237536697845644585\ldots
\end{align*}
($d_8$ by the functional equation and by direct numerical
differentiation, agreeing exactly at working precision).
\end{remark}

% ============================================================
\section{Application IV: class number two (Theorem E)}\label{sec:app4}

The CM point $\tau_E=\ii\sqrt6$ is pure imaginary with
$\ImT\tau_E=\sqrt6>1/\sqrt2$, so Theorem~\ref{thm:samart4} applies
directly. The field $K=\Qq(\sqrt{-6})$ has discriminant $-24$ and
\emph{class number $2$} --- the first such case treated by the
machine --- and everything is one step richer: two newforms, a genus
character, a non-principal ideal class, and a four-point
\texttt{cert0} lock.

\subsection{Station S4: the four-point quartic lock}

Class number two makes the $s_4$-value quartic over $\Qq$, in
$\Qq(\sqrt2,\sqrt3)$. \texttt{cert0\_n4\_p4\_t3.py} locates and
locks all four conjugate values:
\begin{center}
\begin{tabular}{lll}
\toprule
signs & CM point & lock half-width\\
\midrule
$(+,+)$ & $\ii\sqrt6$ \ ($q=\e^{-2\pi\sqrt6}>0$) & $7.0\times10^{-47}$\\
$(-,+)$ & $\ii\sqrt6/4$ \ ($q=\e^{-\pi\sqrt6/2}>0$) & $4.4\times10^{-51}$\\
$(+,-)$ & $(1+\ii\sqrt6)/2$ \ ($q=-\e^{-\pi\sqrt6}<0$) & $3.1\times10^{-50}$\\
$(-,-)$ & $1/2+\ii\sqrt6/4$ \ ($q=-\e^{-\pi\sqrt6/2}<0$) & $6.8\times10^{-53}$\\
\bottomrule
\end{tabular}
\end{center}
with minimal polynomial
\[
  X^4-4829472X^3-8676927360X^2+3042408646656X+7520406736896,
\]
whose coefficients are confirmed as integers both by exact
$\Zz[\sqrt2,\sqrt3]$ arithmetic and by the interval locks; the
minimum root separation $305.0$ dwarfs every lock radius. Two
structural observations aided the location (used for location only,
not in the certified chain): $s_4$ is $W_2$-invariant,
$s_4(-1/(2\tau))=s_4(\tau)$, observed numerically to $10^{-33}$ at
sample points; and the second class of discriminant $-24$ degenerates:
$s_4(\ii\sqrt6/2)=s_4(\ii\sqrt6/6)=2304\in\Zz$, which is why that
value is absent from the quartic roots. In particular the $(+,-)$
value is entry \#2, proved as Theorem~\ref{thm:H}
(Section~\ref{sec:app6}), and the $(-,\pm)$
values are entries \#3, \#4 of Table~\ref{tab:open}: their station
S4 is already done.

\subsection{The two newforms and the genus character}\label{subsec:genus}

The class group is $\Zz/2$ and coincides with the genus group. The
theta series of the two gr\"ossencharacters $\psi$ and
$\psi\cdot\chi_g$ of type $(2,0)$ have coefficients (exact-integer
checks [th0]--[th6]: integrality, $a(1)=1$, the Hecke recursion
$a(p^2)=a(p)^2-\chi_{-24}(p)p^2$, multiplicativity, vanishing at
inert primes, distinctness):
\[
  a^{(1)}(n)=P(n)-Q(n),\qquad a^{(2)}(n)=P(n)+Q(n),
\]
\[
  P(n)=\frac12{\sum_{a^2+6b^2=n}}^{\!\prime}\bigl(a^2-6b^2\bigr),
  \qquad
  Q(n)=\frac14{\sum_{2x^2+3y^2=n}}^{\!\prime}\bigl(4x^2-6y^2\bigr),
\]
$P$ collecting the principal ideal class and $Q$ the non-principal
one (elements of norm $2n$ in $\mathfrak p_2=(2,\sqrt{-6})$ are in
bijection with non-principal ideals of norm $n$). By Hecke's theorem
\cite{Miyake} both are newforms of $S_3(\Gamma_0(24),\chi_{-24})$,
which has dimension $2$, so they exhaust the space; the coefficient
match ($q\mp2q^2\pm3q^3+4q^4\pm2q^5-6q^6$) fixes the LMFDB labels
\[
  g_{24}^{(1)}=P-Q=\textsf{24.3.h.a},\qquad
  g_{24}^{(2)}=P+Q=\textsf{24.3.h.b},
\]
the unique rational CM newforms of level $24$ and weight $3$
\cite{LMFDB} (in agreement with Sch\"utt's classification \cite{Sch}).
Samart's $M_{24}^{(1)}$ (coefficient $28$ in
Theorem~\ref{thm:E}) is our $g_{24}^{(1)}=P-Q$; swapping the labels
breaks the identity at the $0.4$ level, a labeling pitfall we record.
The \emph{genus character} is
$\chi_g=\chi_8\circ N=\chi_{-3}\circ N$ on ideal norms (exact check
[th2]: $\chi_8\chi_{-3}=\chi_{-24}$, the field character, so the two
compositions coincide on norms), and it handles exactly the
non-principal class that the mod-$2$ ray projector of
\S\ref{subsec:rayproj} cannot see.

\subsection{Station S5: the class-number-two lattice decomposition}
\label{subsec:class2}

The lattices are $\Lambda_1=\OO_K$ and $\Lambda_2=\OO_2$. The new
ingredients, relative to class number one, are: (a) the class-group
projector $(1+\chi_g)/2$ on the $B$-side and the difference of the
two $\psi$-values on the $G$-side; (b) sums over $\mathfrak p_2$ as a
lattice, with homothety factors $1/4$ (not $1/2$ --- numerically
locked, then certified); (c) the ray condition
$\{a\text{ odd},b\text{ even}\}$ via $\chi_{-8}\circ N$, which is
\emph{not} a Hecke character (nontrivial on principal ideals) but
whose $L_K$-value still factors as a product of Dirichlet
$L$-functions. Exact decomposition:
\begin{align*}
  B(\OO_K)&=\zeta_K(2)+L(\chi_8,2)L(\chi_{-3},2),&
  G(\OO_K)&=L\bigl(g_{24}^{(1)},3\bigr)+L\bigl(g_{24}^{(2)},3\bigr),\\
  B(\mathfrak p_2)&=\frac{\zeta_K(2)-L(\chi_8,2)L(\chi_{-3},2)}4,&
  G(\mathfrak p_2)&=\frac{L(g_{24}^{(2)},3)-L(g_{24}^{(1)},3)}4,
\end{align*}
(the sign in $G(\mathfrak p_2)$ is $\psi(\mathfrak p_2)=-2$), and
\[
  B_{\mathrm{odd},\chi}=L(\chi_{-8},2)L(\chi_{3},2)
   +L(\chi_{-4},2)L(\chi_{24},2),\qquad
  G_{\mathrm{odd},\chi}=L\bigl(g_{24}^{(1)}\otimes\chi_{-8},3\bigr)
   +L\bigl(g_{24}^{(2)}\otimes\chi_{-8},3\bigr).
\]
Note $T(\Lambda)=B(\Lambda)+2G(\Lambda)$ for \emph{every} lattice:
the pointwise identity $4(\ReT z)^2/|z|^6-1/|z|^4
=2\ReT(z^2)/|z|^6+1/|z|^4$ (exact check [X0]) plus absolute
convergence, closing what had briefly appeared to be a gap. The even
primitive closed forms \cite{Wa}
\[
  L(\chi_3,2)=\frac{\pi^2\sqrt3}{18}\quad
  (\text{conductor }12,\ \textstyle\sum a^2\chi_3(a)=48),\qquad
  L(\chi_{24},2)=\frac{\pi^2\sqrt6}{24}\quad
  (\textstyle\sum a^2\chi_{24}(a)=288)
\]
are absorbed by the $d_8$ and $d_4$ terms in the assembly. With
$L_1,L_2,L_1^{\mathrm{tw}},L_2^{\mathrm{tw}}$ the four $L$-values at
$s=3$,
\begin{align*}
  \mathrm{comb}=\;&\frac72L_1+\frac32L_2+4L_1^{\mathrm{tw}}
   +4L_2^{\mathrm{tw}}+\frac34\zeta_K(2)\\
   &+\frac74L(\chi_8,2)L(\chi_{-3},2)
   +2L(\chi_{-8},2)L(\chi_3,2)+2\,\mathrm{Cat}\,L(\chi_{24},2),
\end{align*}
where $\mathrm{Cat}=L(\chi_{-4},2)$ is Catalan's constant,
and the [S1]--[S8] Fraction-exact assembly (single-term substitutions)
produces the coefficient list
\[
  \tfrac{35}{12},\ \tfrac54,\ \tfrac{5}{12},\ \tfrac{5}{12},\
  \tfrac{35}{12},\ \tfrac52,\ \tfrac56,\ \tfrac{5}{48},
\]
yielding Theorem~\ref{thm:E}. \qed

\begin{remark}[Traps recorded for the machine]
(i) $\chi_3$ has conductor $12$ with period values $0$ at even
positions; a wrongly tabulated period (nonzero even positions) makes
standard Dirichlet-$L$ software silently return a wrong value
($1.1325$ instead of $0.951\ldots$), which once produced a false
failure at the $3.5\times10^{-2}$ level. The machine now treats
character tables as exact data to be certified ([X3]). (ii) The
twist levels ($96$) are exact: $\chi_{-8}\circ N$ has conductor $(2)$
over $\Qq(\sqrt{-6})$, so the twist level is $24\cdot4=96$
(Table~\ref{tab:levels}); residues were never used.
\end{remark}

\begin{remark}[Confirmations]
The mp track (60 dps) confirms the final identity to
$2.49\times10^{-60}$ (check [E]); the [V] interval-lock track
(Dual-number $\coth$ rows, $E_1$-bracketed Mellin with the
self-contained coefficient bound $|a(n)|\le6n^3$, level-$96$ Mellin
truncation $n_0\approx481$) locks $T(\OO_K)$, $T(\OO_2)$, and the
final identity with half-widths $4.50\times10^{-53}$,
$4.92\times10^{-53}$, and $9.72\times10^{-53}$. Direct torus
integration ($c\approx46.88$, smooth) confirms at float64 level.
Reference values (50 dps, Mellin; level $96$, $w=+1$ for the twists):
\begin{align*}
  M_{24}^{(1)}&=0.8366946514850983661417153108277990376289\ldots,\\
  M_{24}^{(2)}&=1.093580625535742995623816182417982487423\ldots,\\
  M_{24}^{(1),\mathrm{tw}}&=8.547412838595950972260990976022241348897
   \ldots,\\
  M_{24}^{(2),\mathrm{tw}}&=7.183569611977336512537940485577907159979
   \ldots
\end{align*}
\end{remark}

% ============================================================
\section{Application V: the boundary point that turned out interior
(Theorem F)}\label{sec:app5}

The point
\[
  \tau_4=\frac{1+\sqrt{-2}}2,\qquad \ImT\tau_4=\frac1{\sqrt2},
\]
lies \emph{exactly} on the boundary of Samart's applicability region.
Moreover $\tau_4$ is not pure imaginary,
$q(\tau_4)=-\e^{-\pi\sqrt2}<0$ is a negative real (outside
his $q\in(0,1)$ hypothesis) and $|s_4(\tau_4)|=20.955\ldots<256$
(outside the ${}_5F_4$ convergence region), so
Theorem~\ref{thm:samart4} fails to cover $\tau_4$ for three
independent
reasons. This was expected to require a boundary-continuity argument.
It does not: the boundary in question is not the boundary of the
problem.

\subsection{Samart's boundary is not the topological boundary of
\texorpdfstring{$V_4$}{V4}}\label{subsec:notboundary}

A $60$-digit scan along the line $\ReT\tau=1/2$ shows the astroid
functional $|\ReT c|^{2/3}+|\ImT c|^{2/3}$ of $c(\tau)$ decreasing
monotonically from $4.377$ at $y=1$, crossing the threshold
$4^{2/3}=2.51984$ only at $y\approx0.693$ (margin $+0.064$ at
$y=0.700$, $-0.011$ at $y=0.690$). At $\tau_4$ ($y=1/\sqrt2=0.70711$)
the functional is $2.6357$, with margin $+0.116$. Since $c$ is
holomorphic (an open map) and $\Ast$ is closed, $\tau_4$ is a
\emph{strict interior point} of $V_4$. Samart's line
$\{\ImT\tau=1/\sqrt2\}$ is dictated by the real-$q$ mechanism of his
proof; the topological boundary of $V_4$ along this segment lies
$\approx0.014$ lower, and $\tau_4$ sits strictly between the two.
Consequently the short \texttt{cert2} path of Theorem~\ref{thm:B}
suffices, and no boundary-continuity step is needed.

\subsection{Station S3: the certified path}

\texttt{n4\_p4\_t4\_cert.py} (template
\texttt{cert2\_path\_n4\_m144.py}; leg B extended down to
$y=0.702<1/\sqrt2$ so that $\tau_4$ lies strictly inside the
certified leg; the $|q|$ bound $0.0122$ keeps the tail
$r^{41}<10^{-78}$):
\begin{itemize}
\item anchor box $D$: margin $0.29143$, certified $|c|\ge4.71$,
  $|s_4|\ge493.6>256$ (activating Lemma~\ref{lem:anchor});
\item leg A ($y=1$, $x\in[0,1/2]$): $2$ blocks, minimum margin
  $0.14759$;
\item leg B ($x=1/2$, $y\in[0.702,1]$): $4$ blocks, minimum margin
  $0.02270$ (most dangerous block $y\in[0.702,0.7393]$);
\item point enclosure at $\tau_4$: astroid margin $0.11590>0$ ---
  an interval-certified interior-point verdict;
\item self-tests: $21/21$ point inclusions (80-dps values inside
  interval enclosures, including $\tau_4$ itself) and $5/5$
  enclosure-containment checks (point enclosures inside block
  enclosures).
\end{itemize}
Hence the whole path and $\tau_4$ lie in $W$, the component of the
anchor box, and Theorem~\ref{thm:main} gives
$n_4(s_4(\tau_4))=\EK_4(\tau_4)$. As a byproduct, the entries
\#1 ($\tau=(1+2\ii)/2$, the top corner of the legs) and \#7
($\tau=(3+\ii\sqrt{21})/6=1/2+0.7638\ii$, on leg B) --- now proved
as Theorems~\ref{thm:G} and~\ref{thm:J} (Section~\ref{sec:app6})
--- already have their station S3 certified by this
same path.

\subsection{Station S4}

The value $s_4(\tau_4)=3656-2600\sqrt2$ is already locked exactly by
\texttt{cert0\_n4\_p3\_t1t2.py} as the conjugate point of
Theorem~\ref{thm:C} ($S=7312$, $P=-153664$;
\S\ref{subsec:app3C}).

\subsection{Station S5: a different lattice decomposition, not a
change of clothes}

Although the $s$-value is conjugate to Theorem~\ref{thm:C}'s, the CM
point is different, and the lattice structure differs:
$2\tau_4=1+\sqrt{-2}$ gives $\Lambda_2=\OO_K$ (not $\OO_2$), and
\[
  2\Lambda_1=2\Zz+\Zz(1+\sqrt{-2})
  =\{a+b\sqrt{-2}:a\equiv b\ (2)\}=:\OO'
  =2\OO_K\ \sqcup\ \bigl((1+\sqrt{-2})+2\OO_K\bigr),
\]
a non-ideal sublattice (exact [X1]--[X3]). The parity-class sums from
Theorem~\ref{thm:C}'s anchors ([X4], [X5], homothety $1/16$ [X8]):
\[
  B_{00}=\frac{\zeta_K(2)}8,\quad B_{11}=\frac34\zeta_K(2)-z_V,\qquad
  G_{00}=\frac{L_8}8,\quad G_{11}=\frac54L_8-L_8^{\mathrm{tw}},
\]
\[
  T(\OO')=\frac{11}{4}L_8-2L_8^{\mathrm{tw}}+\frac78\zeta_K(2)-z_V,
  \qquad T(\Lambda_1)=16\,T(\OO'),
\]
\[
  \mathrm{comb}=-T(\Lambda_1)+4T(\Lambda_2)
  =-28L_8+32L_8^{\mathrm{tw}}-6\zeta_K(2)+16z_V .
\]
Note the sign pattern: the twisted and $\zeta_K(2)$ terms flip sign
relative to Theorem~\ref{thm:C} (combinations $-28/+32$ versus
$+7/+8$), and the $-d_8$ term traces to the coefficient $-16$ of
$z_V$ (exact witness [S7]: the two sign patterns are genuinely
different). The $e$-basis assembly of \S\ref{subsec:app3C} gives
(exact [X10]/[S6])
\[
  \EK_4(\tau_4)=5\,\mathrm{comb}\cdot e=(-140,160,-30,80)\cdot e
  =\frac54\bigl(4M_8^{\mathrm{tw}}-28M_8+4d_4-d_8\bigr),
\]
proving Theorem~\ref{thm:F}. \qed

\begin{remark}[Confirmations]
The mp track (60 dps; the $L$-values recomputed in-script at levels
$8/32$, cross-checked against Section~\ref{sec:app3} to
$1.6\times10^{-40}$) confirms the final identity to
$9.33\times10^{-61}$ (check [E1]); the [V] interval-lock track ---
extended by a shifted $\coth/\tanh$ row machine
(\texttt{lattice\_T\_iv\_t4}: at $x_0=1/2$ the Poisson kernel rows
alternate between $\coth(\pi y)$ and $\tanh(\pi y)$; the power terms
are shift-independent and the tail bounds uniform in the shift) ---
locks $T(\Lambda_2)$, $T(\Lambda_1)$, and the final identity with
half-widths $3.48\times10^{-53}$, $6.94\times10^{-52}$, and
$9.12\times10^{-53}$. Direct torus integration at the complex
parameter $c=1.5128932208(1-\ii)$ (outside $\Ast$: functional
$2.6357$) confirms at float64 level:
$n_4(3656-2600\sqrt2)\approx3.5283920696\approx\EK_4(\tau_4)$
(the two computations differ by $2.5\times10^{-13}$, the float64
noise floor).
\end{remark}

% ============================================================
\section{Five further proofs: class numbers one, two, and four}
\label{sec:app6}

The five entries of Samart's Table~6 that the checklist of
\S\ref{subsec:inventory} scored as within current reach are now
proved. We state them as Theorems G--K and record the status of
each station; the three genuinely new methodological ingredients
are collected in \S\ref{subsec:app6new}.

\begin{theorem}[Theorem G]\label{thm:G}
With the notation of Theorem~\ref{thm:D},
\[
  n_4\bigl(143208-101574\sqrt2\bigr)
  =\frac58\bigl(4M_{16}^{\mathrm{tw}}-20M_{16}-9d_4+4d_8\bigr).
\]
The attached CM point is $\tau_0=(1+2\ii)/2$.
\end{theorem}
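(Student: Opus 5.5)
The plan is to run the five stations of Remark~\ref{rem:stations} at $\tau_0=(1+2\ii)/2=\tfrac12+\ii$, reusing as much of the Theorem~\ref{thm:D} and Theorem~\ref{thm:F} infrastructure as possible.

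\textbf{Stations S1, S3 and S4.} S1 is already done: $\tau_0$ is the conjugate point of $\tau_D=2\ii$. For S3, $\tau_0$ is the top corner of the two-leg path of Section~\ref{sec:app5}, namely leg A at $\ImT\tau=1$ followed by leg B at $\ReT\tau=1/2$. It also lies on the segment $\ReT\tau=1/2$, $0.702\le\ImT\tau\le\sqrt{21}/2$ certified by \texttt{n5\_line\_cert.py}. Hence $\tau_0\in W$, the component of the anchor box $D$. Lemma~\ref{lem:anchor} gives $F=G$ on $D$, and Theorem~\ref{thm:main} then yields $n_4(s_4(\tau_0))=\EK_4(\tau_0)$ with no cap. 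S4 is already done as well: \texttt{cert0\_n4\_p3\_t1t2.py} locks $s_4((1+2\ii)/2)=143208-101574\sqrt2$ from $S=286416$, $P=-126023688$ and the negative real $q(\tau_0)=-\e^{-2\pi}$.

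\textbf{Station S5: the lattices.} Work in $K=\Qq(\ii)$, reusing the notation of \S\ref{subsec:app3D}. Since $2\tau_0=1+2\ii$, we get $\Lambda_2(\tau_0)=\Zz+\Zz(1+2\ii)=\Zz+2\ii\Zz=\OO_2$. For the other lattice, $2\Lambda_1(\tau_0)=2\Zz+\Zz(1+2\ii)=:\OO''$, an index-$2$ non-ideal sublattice of $\OO_2$. It splits as
\[
\OO''=2\OO_2\ \sqcup\ \{a+b\ii:\ a\text{ odd},\ b\equiv2\ (4)\},
\]
and by $(-4)$-homogeneity $T_1(\tau_0)=16\,T(\OO'')$. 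Compare this with $\OO_4=2\OO_2\sqcup\{a\text{ odd},\ b\equiv0\ (4)\}$, used for Theorem~\ref{thm:D}. The projector $(1\pm\chi_8\circ N)/2$ of \S\ref{subsec:rayproj} distinguishes the classes $b\equiv0$ and $b\equiv2\pmod 4$. The two odd classes therefore share the untwisted halves and carry opposite signs on the $\chi_8\circ N$-twisted halves, namely $L_{16}^{\mathrm{tw}}$ and the $z_{V2}$ part of $B$. This mirrors exactly the passage $\OO^{(4)}\leftrightarrow\Lambda'$ in Theorem~\ref{thm:A} and $\OO_2\leftrightarrow\OO'$ in Theorem~\ref{thm:F}. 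The expected outcome is $T(\OO'')=\tfrac94L_{16}-2L_{16}^{\mathrm{tw}}+\tfrac{55}{64}\zeta_K(2)\mp z_{V2}$, giving
\[
\mathrm{comb}=-16\,T(\OO'')+4T(\OO_2)=-20L_{16}+32L_{16}^{\mathrm{tw}}-\tfrac{27}{4}\zeta_K(2)\pm16z_{V2}.
\]

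\textbf{Station S5: assembly.} Using $\EK_4(\tau_0)=\tfrac{10}{\pi^3}\,\mathrm{comb}$ (here $\ImT\tau_0=1$) and the $e$-basis of Theorem~\ref{thm:D} ($M_{16}=16e_1$, $M_{16}^{\mathrm{tw}}=128e_2$, $d_4=12e_3$, $d_8=64e_4$), the target is the vector $(-200,320,-135/2,160)\cdot e$. The first three entries already match the computation above.

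\textbf{Main obstacle.} The step I expect to be hardest is the sign of the $z_{V2}$ term: which odd parity class carries $+z_{V2}$ in the $B$-sum, and whether the $G$-side twist sign is consistent with it. A naive transfer from $\OO_4$ produces $-160$ rather than $+160$. I would settle this first by an exact Fraction-arithmetic recomputation of the four parity-class sums $B_{b\equiv0},B_{b\equiv2},G_{b\equiv0},G_{b\equiv2}$ over $\{a\text{ odd}\}$, using the character tables of $\chi_8\circ N$ and $\chi_{-4}$. This recomputation would be cross-checked by a shifted $\coth/\tanh$ row interval lock at $x_0=1/2$ (as in Theorem~\ref{thm:F}) and a $60$-digit mp check of the final identity. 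Finally, the level-$64$ Mellin constant for $M_{16}^{\mathrm{tw}}$ and its root number $w=+1$ are inherited from Table~\ref{tab:levels} and Lemma~\ref{lem:signlock}, avoiding the twist-level trap of \S\ref{subsec:twisttrap}.
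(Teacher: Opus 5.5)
Your plan is the paper's proof station by station. S3 comes from the corner $(1/2,1)$ of the legs of \texttt{n4\_p4\_t4\_cert.py}, and S4 from the conjugate lock of Theorem~\ref{thm:D}. S5 uses $\Lambda_2=\OO_2$, $2\Lambda_1=2\OO_2\sqcup\bigl((1+2\ii)+2\OO_2\bigr)$ and the $e$-basis of Theorem~\ref{thm:D}. The one gap is that you leave the sign of the $z_{V2}$ term open. Your remark that a naive transfer from $\OO_4$ gives $-160$ is also wrong. The sign is already forced by the projector argument you wrote down.

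\textbf{Closing the sign.} On $C_0=\{a\text{ odd},\ b\equiv0\ (4)\}$ one has $N(\alpha)\equiv1\pmod 8$. On $C_2=\{a\text{ odd},\ b\equiv2\ (4)\}$ one has $N(\alpha)\equiv5\pmod8$, so $\chi_8(N\alpha)=-1$ there.

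Each odd ideal of $\Zz[\ii]$ has exactly two generators $\equiv1\bmod 2$. Hence
\[
\sum_{\alpha\equiv1\,(2)}N(\alpha)^{-2}=\tfrac32\zeta_K(2),\qquad
\sum_{\alpha\equiv1\,(2)}\chi_8(N\alpha)N(\alpha)^{-2}=2L(\chi_8,2)L(\chi_{-8},2)=2z_{V2}.
\]
This gives $B(C_0)=\tfrac34\zeta_K(2)+z_{V2}$ and $B(C_2)=\tfrac34\zeta_K(2)-z_{V2}$. In exactly the same way, $G(C_0)=L_{16}+L_{16}^{\mathrm{tw}}$ and $G(C_2)=L_{16}-L_{16}^{\mathrm{tw}}$. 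So the $B$-side and $G$-side twists flip together, which also settles your consistency worry.

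Add $T(2\OO_2)=\tfrac1{16}T(\OO_2)=\tfrac14L_{16}+\tfrac7{64}\zeta_K(2)$. Then
\[
T(\OO'')=\tfrac94L_{16}-2L_{16}^{\mathrm{tw}}+\tfrac{55}{64}\zeta_K(2)-z_{V2},\qquad
\mathrm{comb}=-20L_{16}+32L_{16}^{\mathrm{tw}}-\tfrac{27}{4}\zeta_K(2)+16z_{V2}.
\]
Therefore $\EK_4(\tau_0)=10\,\mathrm{comb}\cdot e=(-200,320,-135/2,160)\cdot e$. This is exactly the paper's combination and the right-hand side of Theorem~\ref{thm:G}.

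As a consistency check, the $+z_{V2}$ in Theorem~\ref{thm:D}'s $T(\OO_4)$ is precisely the $C_0$ contribution, so $C_2$ must carry $-z_{V2}$. A value of $-160$ would arise only if you flipped $L_{16}^{\mathrm{tw}}$ but not $z_{V2}$. That contradicts your own description of the two odd classes as opposite-sign halves of the $\chi_8\circ N$ projection. With this sign fixed, the proposal is a complete proof along the paper's lines.
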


\begin{theorem}[Theorem H: class number two, second sign pattern]
\label{thm:H}
With the notation of Theorem~\ref{thm:E},
\begin{multline*}
  n_4\bigl(1207368+853632\sqrt2-697680\sqrt3-493272\sqrt6\bigr)\\
  =\frac{5}{24}\Bigl(4M_{24}^{(1),\mathrm{tw}}+4M_{24}^{(2),\mathrm{tw}}
   -28M_{24}^{(1)}-12M_{24}^{(2)}
   -28d_3+24d_4+8d_8-d_{24}\Bigr).
\end{multline*}
The attached CM point is $\tau_0=(1+\sqrt{-6})/2$.
\end{theorem}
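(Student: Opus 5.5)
The plan is to run Theorem~\ref{thm:H} through the five stations exactly as for Theorem~\ref{thm:F}, reusing the class-number-two data of Section~\ref{sec:app4}.

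\textbf{Stations S1--S4.} Station S1 is the identification $\tau_0=(1+\sqrt{-6})/2$. By the four-point table of \texttt{cert0\_n4\_p4\_t3.py}, this is the $(+,-)$ conjugate of the quartic, so station S4 is already done: $s_4(\tau_0)=1207368+853632\sqrt2-697680\sqrt3-493272\sqrt6$ exactly.

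For station S3, note that $\ImT\tau_0=\sqrt6/2\approx1.2247$ lies in $[0.702,\sqrt{21}/2]$. The point therefore sits on the vertical segment $\ReT\tau=1/2$ certified by \texttt{n5\_line\_cert.py}. That segment is joined to the anchor box $D$ by the horizontal leg A at $\ImT\tau=1$ of Section~\ref{sec:app5}, and $D$ carries $|s_4|\ge493>256$. Lemma~\ref{lem:anchor} then gives $F=G$ on $D$. Theorem~\ref{thm:main} yields $n_4(s_4(\tau_0))=\EK_4(\tau_0)$, with $\tau_0$ a strict interior point of $W$, so no cap is needed. I would additionally record a point enclosure of $c(\tau_0)$ with positive astroid margin.

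\textbf{Station S5: the lattices.} The remaining work is the exact evaluation of $-T_1+4T_2$ at $\tau_0$. Since $2\tau_0=1+\sqrt{-6}$, we have $\Lambda_2(\tau_0)=\OO_K$. Also
\[
2\Lambda_1(\tau_0)=\OO':=\{a+b\sqrt{-6}:a\equiv b\ (2)\}=2\OO_K\sqcup\bigl((1+\sqrt{-6})+2\OO_K\bigr),
\]
an index-$2$ non-ideal sublattice. By $(-4)$-homogeneity, $T_1=16\,T(\OO')$.

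\textbf{Station S5: the parity classes.} I would split $\OO_K\setminus\{0\}$ into the four parity classes $(a,b)\bmod 2$. The norm $a^2+6b^2$ has the parity of $a$, so the classes $00$ and $01$ together form $\mathfrak p_2$. The classes $10$ and $11$ are the odd-norm elements, with sum $S_{\mathrm{odd}}=S(\OO_K)-S(\mathfrak p_2)$ for $S\in\{B,G\}$. The character $\chi_{-8}\circ N$ is exactly the ray projector separating $10$ from $11$. Hence
\[
S_{11}=\tfrac12\bigl(S_{\mathrm{odd}}-S_{\mathrm{odd},\chi}\bigr),
\]
with $S_{\mathrm{odd},\chi}$ taken from the Section~\ref{sec:app4} table (the level-$96$ twists on the $G$-side, and the products $L(\chi_{-8},2)L(\chi_3,2)+\mathrm{Cat}\,L(\chi_{24},2)$ on the $B$-side). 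The class $00$ is $2\OO_K$, with homothety factor $1/16$. Then
\[
T(\OO')=B_{00}+B_{11}+2(G_{00}+G_{11}),\qquad
\mathrm{comb}=-16\,T(\OO')+4\,T(\OO_K).
\]
Finally I would substitute into the $e$-basis of Theorem~\ref{thm:E}, with $M^{(i)}$, $M^{(i),\mathrm{tw}}$ at levels $24$ and $96$, $w=+1$ by Lemma~\ref{lem:signlock}, and the closed forms for $L(\chi_3,2)$ and $L(\chi_{24},2)$. Fraction-exact coefficient comparison against the stated right-hand side, with a $[V]$ interval lock and an mp cross-check, completes the argument.

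\textbf{Expected obstacle.} The main risk is sign bookkeeping in the class-$11$ sums. These elements have odd norm but may lie in either ideal class, and the genus character $\chi_g$ and the value $\psi(\mathfrak p_2)=-2$ feed into $G_{\mathrm{odd}}$ with opposite signs for $g^{(1)}$ and $g^{(2)}$.

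The expected pattern, mirroring Theorem~\ref{thm:F}, is as follows: the untwisted terms $M^{(i)}$, $d_3$, $d_{24}$ flip sign relative to Theorem~\ref{thm:E}, the twisted and $d_4$, $d_8$ terms do not, and the overall factor doubles from $5/48$ to $5/24$. I would verify this pattern first, numerically at 60 digits, to catch labeling or sign errors (such as the $g^{(1)}\leftrightarrow g^{(2)}$ pitfall) before certifying the exact track.
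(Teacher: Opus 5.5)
Your proposal is correct and is essentially the paper's proof: S3 comes from the certified segment $\ReT\tau=1/2$ of \texttt{n5\_line\_cert.py}, S4 from the four-point quartic lock, and S5 from $\Lambda_2=\OO_K$ and $2\Lambda_1=2\OO_K\sqcup C''$ (your class $11$), so that $\mathrm{comb}=-16\,T(\OO')+4\,T(\OO_K)=3T(\OO_K)-16T(C'')$. Carrying out your bookkeeping $T_{11}=\tfrac12(B_{\mathrm{odd}}-B_{\mathrm{odd},\chi})+(G_{\mathrm{odd}}-G_{\mathrm{odd},\chi})$ with the anchors of \S\ref{subsec:class2} gives exactly the paper's vector $(-14,-6,16,16,-3,-7,8,8)$; with the factor $5\sqrt6/\pi^3$ this yields the sign and factor pattern you predicted relative to Theorem~\ref{thm:E}.
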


\begin{theorem}[Theorem I:
\texorpdfstring{$\Qq(\sqrt{-15})$}{Q(sqrt(-15))}]\label{thm:I}
Let $K=\Qq(\sqrt{-15})$ \textup{(}class number $2$\textup{)}, and let
$g_{15}^{(1)},g_{15}^{(2)}$ be the two newforms of
$S_3(\Gamma_0(15),\chi_{-15})$, theta series of the two
gr\"ossencharacters of $K$ of type $(2,0)$ \textup{(}LMFDB
\textup{\textsf{15.3.d.a}} and \textup{\textsf{15.3.d.b}}, the only
dimension-one CM forms of this level\textup{)}, numbered so that
$a_2\bigl(g_{15}^{(1)}\bigr)=+1$ and $a_2\bigl(g_{15}^{(2)}\bigr)=-1$.
With $M_{15}^{(i)}:=L'\bigl(g_{15}^{(i)},0\bigr)$,
\[
  n_4\Bigl(\frac{-192303-85995\sqrt5}{2}\Bigr)
  =\frac1{15}\bigl(120M_{15}^{(1)}+160M_{15}^{(2)}+88d_3+5d_{15}\bigr).
\]
The attached CM point is $\tau_0=(1+\sqrt{-15})/2$.
\end{theorem}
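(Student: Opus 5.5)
We run the five stations of Remark~\ref{rem:stations} at $\tau_0=(1+\sqrt{-15})/2$, following the template of Theorems~\ref{thm:B} and~\ref{thm:F}.

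\textbf{Station S3.} Here $\ImT\tau_0=\sqrt{15}/2\approx1.936$. This lies on the segment $\ReT\tau=1/2$, $0.702\le\ImT\tau\le\sqrt{21}/2$ certified by \texttt{n5\_line\_cert.py}. We concatenate three pieces: the anchor box $D$ of Lemma~\ref{lem:anchor}, with $|s_4|\ge493$ on it; the horizontal leg A at $\ImT\tau=1$; and this vertical segment. The union is a connected certified subset of $V_4$ meeting $D$, so $\tau_0\in W$. Theorem~\ref{thm:main} then gives $n_4(s_4(\tau_0))=\EK_4(\tau_0)$. One point needs explicit care: the vertical segment must meet leg A, which it does at $1/2+\ii$, and the junction must itself be covered by certified blocks.

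\textbf{Station S4.} $q(\tau_0)=-\e^{-\pi\sqrt{15}}$ is negative real, so $s_4(\tau_0)$ is real. By Shimura integrality and $h(-15)=2$, it is an algebraic integer of degree at most $2$. The conjugate value sits at the non-principal class $(3+\sqrt{-15})/6$, which is entry \#5. The pair lock \texttt{cert0\_n5\_e56.py} forces the two symmetric functions to be integers, giving the minimal polynomial $X^2+192303X+1185921$. Pinning by an interval enclosure then selects the root $(-192303-85995\sqrt5)/2$. As a check, $(192303^2-4\cdot1185921)/85995^2=5$.

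\textbf{Station S5.} This is the main obstacle. Since $2\tau_0=1+\sqrt{-15}$, we have $\Lambda_2(\tau_0)=\OO_K$. The other lattice $\Lambda_1=\Zz+\Zz\tau_0$ is also $\OO_K$, because $\OO_K=\Zz[(1+\sqrt{-15})/2]$. But then $-T_1+4T_2=3T(\OO_K)$, and this cannot produce the $d_3$ term with its asymmetric coefficients. We suspect the true lattices differ from this naive reading: either $\Lambda_1=\Zz+\Zz\tau_0$ and $\Lambda_2=\Zz+2\tau_0\Zz$, the latter being the order of conductor $2$, or an analogous pair. The script name's ``conductor-$2$-order lattice decomposition'' supports this. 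So we take $\Lambda_1=\OO_K$ and $\Lambda_2=\OO_2=\Zz+2\OO_K$ of discriminant $-60$, with $2$ split in $K$, $(2)=\mathfrak p\bar{\mathfrak p}$, and $\mathfrak p$ non-principal.

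We then decompose as in \S\ref{subsec:class2}. On the $B$-side we use the genus projector $(1+\chi_g)/2$ with $\chi_g=\chi_{-3}\circ N=\chi_5\circ N$, so that
\[
B(\OO_K)=\zeta_K(2)+L(\chi_{-3},2)L(\chi_5,2).
\]
On the $G$-side we get $G(\OO_K)=L(g_{15}^{(1)},3)+L(g_{15}^{(2)},3)$. For $\OO_2$ we apply inclusion--exclusion at the split prime $2$, obtaining $\OO_2=\{\alpha\equiv\text{rational}\bmod 2\}$ as a union of $2\OO_K$ and a ray class mod $(2)$. Because $\OO_K/2\cong\mathbb F_2\times\mathbb F_2$, the ray condition is captured by local factors at $\mathfrak p$ and $\bar{\mathfrak p}$. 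These factors involve $\psi(\mathfrak p)$, whose sign is tied to $a_2=\pm1$, and the inclusion--exclusion runs over the non-principal class. No twist should appear, consistent with the absence of any $M^{\mathrm{tw}}$ in the statement.

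Finally, we convert to $L'$-values. The functional equations give $M_{15}^{(i)}=\frac{15\sqrt{15}}{4\pi^3}L(g_{15}^{(i)},3)$, with root number $+1$ by \texttt{cert\_fricke.py}. We also express $d_3$ and $d_{15}$ through $L(\chi_{-3},2)$ and $L(\chi_{-15},2)$, and use the closed form of $L(\chi_5,2)$. The final coefficient comparison in an $e$-basis is done in exact rational arithmetic. Before trusting it, we first confirm numerically that the asymmetric coefficients $120$ and $160$ come out of the $a_2$-dependent local factors, to rule out a labeling swap like the one recorded for Theorem~\ref{thm:E}.
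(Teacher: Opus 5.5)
Your proposal is essentially the paper's proof. The stations match: the line certificate \texttt{n5\_line\_cert.py} for S3, the $\Qq(\sqrt5)$ pair lock with the point of entry~\#5 for S4, and for S5 the lattices $\Lambda_1=\OO_K$, $\Lambda_2=\OO_2=2\OO_K\sqcup(1+2\OO_K)$, with the odd-norm part evaluated by Euler-factor removal at the two primes above the split prime $2$ and then converted by the level-$15$ functional equations.

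The one real slip is your opening claim $\Lambda_2(\tau_0)=\OO_K$. Delete it rather than ``suspecting'' it away: here $\OO_K=\Zz[\tau_0]$, so $\Zz+\Zz(1+\sqrt{-15})=\Zz[\sqrt{-15}]$ is the conductor-$2$ order by direct computation, and no appeal to script names is needed. Carrying your plan through gives the factors $(1\mp\omega/8)(1\mp\bar\omega/8)=\tfrac{15}{16},\tfrac{19}{16}$ on the $G$-side and $\tfrac{9}{16},\tfrac{25}{16}$ on the $B$-side, hence $\mathrm{comb}=(6,8,\tfrac32,\tfrac{11}{2})$, exactly as in the paper.
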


\begin{theorem}[Theorem J: class number four]\label{thm:J}
Let $K=\Qq(\sqrt{-21})$ \textup{(}discriminant $-84$, class number
$4$, class group $(\Zz/2)^2$ with ideal classes
$1,[\mathfrak p_2],[\mathfrak p_3],[\mathfrak p_6]$,
$\mathfrak p_6=\mathfrak p_2\mathfrak p_3$\textup{)}. The four CM
theta series
\[
  g(\varepsilon_2,\varepsilon_3)
  =P_0+\varepsilon_2P_2+\varepsilon_3P_3+\varepsilon_2\varepsilon_3P_6,
  \qquad \varepsilon_2,\varepsilon_3\in\{\pm1\},
\]
of the four gr\"ossencharacters of $K$ of type $(2,0)$ span the
four-dimensional CM subspace of $S_3(\Gamma_0(84),\chi_{-84})$;
they are the newforms
\textup{LMFDB \textsf{84.3.h.a}--\textsf{84.3.h.d}} (the full
newspace has dimension $28$, but only the CM subspace enters this
paper). Here
$P_0,P_2,P_3,P_6$ are the theta sums over the four ideal classes
\textup{(}explicit formulas in
Appendix~\ref{app:data}, \S\ref{app:newforms}\textup{)}. With
$M_{84}^{(3)}:=L'\bigl(g(-1,+1),0\bigr)$ and
$M_{84}^{(4)}:=L'\bigl(g(-1,-1),0\bigr)$,
\[
  n_4\bigl(-893952+516096\sqrt3\bigr)
  =\frac{20}{7}\bigl(M_{84}^{(3)}-M_{84}^{(4)}+8d_3-4d_4\bigr).
\]
The attached CM point is $\tau_0=(3+\sqrt{-21})/6$.
\end{theorem}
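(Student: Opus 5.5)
We follow the five stations of Remark~\ref{rem:stations}. The CM point is $\tau_0=(3+\sqrt{-21})/6=\tfrac12+\ii\sqrt{21}/6$, with $\ImT\tau_0\approx0.7638$.

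\textbf{Stations S1--S3.} The point $\tau_0$ lies on the vertical segment $\ReT\tau=1/2$, $0.702\le\ImT\tau\le\sqrt{21}/2$. That segment is certified inside $V_4$ by \texttt{n5\_line\_cert.py}, with minimum margin $0.00729$. It is also covered by leg B of the Theorem~\ref{thm:F} path, which is joined through leg A to the anchor box $D$. On $D$ we have $|s_4|\ge493>256$, so Lemma~\ref{lem:anchor} gives $F=G$ there. Theorem~\ref{thm:main} then yields $n_4(s_4(\tau_0))=\EK_4(\tau_0)$. Since $\tau_0$ is a strict interior point, no cap is needed.

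\textbf{Station S4.} We have $q(\tau_0)=-\e^{-\pi\sqrt{21}/3}$, a negative real, so $s_4(\tau_0)$ is real. By Shimura integrality it is an algebraic integer in the ring class field of discriminant $-84$, which has genus field $\Qq(\sqrt{-1},\sqrt3,\sqrt{-7})$. We would expect the value to be quadratic over $\Qq(\sqrt3)$, pairing $\tau_0$ with $(1+\sqrt{-21})/2$; the pair lock $X^2+1787904X+84934656$ of \texttt{cert0\_n5\_e78.py} is of exactly this shape. The plan is to interval-lock $S=s_4(\tau_0)+s_4((1+\sqrt{-21})/2)$ and the corresponding product $P$ to integers. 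We then check that the roots are $-893952\pm516096\sqrt3$, using $893952^2-3\cdot516096^2=84934656$, and pin $\tau_0$ to the $+$ root by a radius far below the root separation. The degenerate behaviour of the remaining two classes, as with $2304$ in Theorem~\ref{thm:E}, must be confirmed during location; this is not part of the certified chain.

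\textbf{Station S5, the main work.} We have $\Lambda_2(\tau_0)=\Zz+\Zz(1+\tfrac{\sqrt{-21}}3)$ and $\Lambda_1(\tau_0)=\Zz+\Zz\tau_0$. Rescaling by $6$, resp.\ $3$, turns these into $\OO_K$-fractional ideals, or index-$2$ non-ideal sublattices of such ideals, representing the classes $[\mathfrak p_3]$ and $[\mathfrak p_6]$ (or $[\mathfrak p_2]$). We use $(-4)$-homogeneity exactly as for $\Lambda'$ in Theorem~\ref{thm:A}.

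For a lattice homothetic to an ideal in class $C$, the sum $G$ is expressed through the four grössencharacter $L$-values by inverting the $(\Zz/2)^2$ character table. Thus $G(C)=\tfrac14\sum_{\varepsilon}\bar\psi_\varepsilon(C)\,L(g(\varepsilon),3)$, with homothety factors from $\psi(\mathfrak p_2),\psi(\mathfrak p_3)$. The sum $B$ is expressed through the four genus $L$-functions $L(\chi_{d_1},2)L(\chi_{d_2},2)$ for the factorizations $-84=d_1d_2$: these are $\zeta_K(2)$, $L(\chi_{-4},2)L(\chi_{21},2)$, $L(\chi_{-3},2)L(\chi_{28},2)$ and $L(\chi_{-7},2)L(\chi_{12},2)$. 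Any non-ideal index-$2$ piece is handled by the mod-$2$ ray projector, as in \S\ref{subsec:rayproj}.

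Next we assemble $-T_1+4T_2$ in exact rational arithmetic. We expect only the $\varepsilon_2=-1$ forms to survive, with opposite signs, and the even-character values to be absorbed by the closed forms \cite{Wa}. We then convert to $M_{84}^{(i)}=\tfrac{84\sqrt{84}}{8\pi^3}L(\cdot,3)$, using root number $+1$ from \texttt{cert\_fricke.py} and level $84$ from Table~\ref{tab:levels}, and convert the Dirichlet products to $d_3,d_4$. The coefficient comparison in the $e$-basis should give $\tfrac{20}{7}(M^{(3)}-M^{(4)}+8d_3-4d_4)$.

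The main obstacle is this four-class bookkeeping. It requires correctly identifying which class each rescaled lattice represents, the signs $\psi_\varepsilon(\mathfrak p_2),\psi_\varepsilon(\mathfrak p_3)$, and the homothety factors. Samart's labeling of $M^{(3)},M^{(4)}$ must also be matched to $g(-1,\pm1)$, since the swap pitfall of Theorem~\ref{thm:E} applies here too. We would first fix these against a $60$-digit mp evaluation of each class sum. After that, the exact track is run, and the result is confirmed by an interval lock of the final identity.
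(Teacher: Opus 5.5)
Your route is the paper's: the line certificate (and leg B of the Theorem~\ref{thm:F} path) for S3, the $\Qq(\sqrt3)$ pair lock with the point $(1+\sqrt{-21})/2$ of Theorem~\ref{thm:K} for S4, and the four-class anchor decomposition for S5. As written, however, the S5 assembly would not close, for a concrete reason.

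\textbf{The conversion constant is off by a factor $2$.} From $\Lambda(g,s)=N^{s/2}(2\pi)^{-s}\Gamma(s)L(g,s)$ and root number $+1$ you get
\[
  L'(g,0)=\Lambda(g,0)=\Lambda(g,3)=\frac{N^{3/2}}{4\pi^3}L(g,3),
\]
since $\Gamma(3)=2$. At $N=84$ this is $\frac{42\sqrt{21}}{\pi^3}L(g,3)$, not $\frac{84\sqrt{84}}{8\pi^3}=\frac{21\sqrt{21}}{\pi^3}$; compare $M_7=\frac{7\sqrt7}{4\pi^3}L_3$ in Section~\ref{sec:app1}. With your constant the $M$-coefficients come out as $\frac{40}{7}$ and the comparison fails. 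Likewise, your schematic $G(C)=\frac14\sum_\varepsilon(\cdots)$ does not show the factor $|\OO_K^\times|=2$ between lattice points and ideals; for example $2G(\OO_K)=L_1+L_2+L_3+L_4$.

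\textbf{The class bookkeeping is cleaner than your hedge suggests.} Both rescaled lattices are genuine ideals:
\[
  6\Lambda_1=6\Zz+\Zz(3+\sqrt{-21})=\mathfrak p_6,\qquad
  3\Lambda_2=3\Zz+\Zz\sqrt{-21}=\mathfrak p_3 .
\]
So there are no non-ideal index-$2$ pieces, no ray projector, and no class $[\mathfrak p_2]$. Hence $T(\Lambda_1)=1296\,T(\mathfrak p_6)$ and $T(\Lambda_2)=81\,T(\mathfrak p_3)$, and the anchors of \S\ref{subsec:app6new}(iii) give
\[
  -1296\,T(\mathfrak p_6)+324\,T(\mathfrak p_3)=36(A_3-A_4)+72(L_3-L_4),
\]
with $z_K$, $A_7$, $L_1$, $L_2$ cancelling. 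Multiply by $\frac{5\sqrt{21}}{3\pi^3}$. The $L$-part becomes $\frac{120\sqrt{21}}{\pi^3}(L_3-L_4)=\frac{20}{7}\bigl(M_{84}^{(3)}-M_{84}^{(4)}\bigr)$. For the Dirichlet part, use $L(\chi_{28},2)=2\sqrt7\pi^2/49$, $L(\chi_{21},2)=8\sqrt{21}\pi^2/441$ and $d_k=\frac{k^{3/2}}{4\pi}L(\chi_{-k},2)$. These give $\frac{60\sqrt{21}}{\pi^3}A_3=\frac{160}{7}d_3$ and $\frac{60\sqrt{21}}{\pi^3}A_4=\frac{80}{7}d_4$, which is exactly the stated right-hand side.

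A minor point on S4: the value lies \emph{in} $\Qq(\sqrt3)$; it is not quadratic over $\Qq(\sqrt3)$.
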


\begin{theorem}[Theorem K]\label{thm:K}
With the notation of Theorem~\ref{thm:J},
\[
  n_4\bigl(-893952-516096\sqrt3\bigr)
  =\frac{20}{21}\bigl(M_{84}^{(3)}+M_{84}^{(4)}+8d_3+4d_4\bigr).
\]
The attached CM point is $\tau_0=(1+\sqrt{-21})/2$.
\end{theorem}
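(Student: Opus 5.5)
The proof runs through the five stations of Remark~\ref{rem:stations} at $\tau_0=(1+\sqrt{-21})/2$, and is shared with Theorem~\ref{thm:J}.

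\textbf{Station S3.} Here $\ImT\tau_0=\sqrt{21}/2\approx2.29$ and $\ReT\tau_0=1/2$. The point therefore lies at the top end of the vertical segment certified by \texttt{n5\_line\_cert.py} ($8$ blocks, minimum margin $0.00729$). I would join this segment to the anchor box $D$ by the horizontal leg A of \S\ref{sec:app5} at $\ImT\tau=1$. Leg A meets the vertical segment at $1/2+\ii$. Since every block avoids $\Ast$, the whole path, and $\tau_0$ itself, lie in the component $W$ containing $D$. On $D$ we have $F=G$ by Lemma~\ref{lem:anchor}, because $|s_4|\ge493$ there. Theorem~\ref{thm:main} then gives $n_4(s_4(\tau_0))=\EK_4(\tau_0)$, and no endpoint cap is needed because $\tau_0$ is a strict interior point.

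\textbf{Station S4.} We have $q(\tau_0)=-\e^{-\pi\sqrt{21}}<0$, so $s_4(\tau_0)$ is real. \texttt{cert0\_n5\_e78.py} locks the pair of values at $\tau_0$ and $(3+\sqrt{-21})/6$ as the roots of $X^2+1787904X+84934656$. These roots are $-893952\pm516096\sqrt3$; the check is that $893952^2-3\cdot516096^2=84934656$. The lock follows the discreteness pattern of \S\ref{subsec:cert0}: integrality of the conjugates over the genus subfield $\Qq(\sqrt3)$ of the ring class field, integer locks on the sum and product, and pinning. The value at $\tau_0$ is the root of larger absolute value, $-893952-516096\sqrt3\approx-1.79\times10^6$, which agrees with $|s_4|\approx e^{\pi\sqrt{21}}$ at $q\approx-\e^{-14.4}$.

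\textbf{Station S5.} This is the core step and the main obstacle. The lattices are
\[
  \Lambda_2=\Zz+\Zz(1+\sqrt{-21})=\OO_K,\qquad
  2\Lambda_1=2\Zz+\Zz(1+\sqrt{-21})=\{a+b\sqrt{-21}:a\equiv b\ (2)\}=:\OO'.
\]
$\OO'$ is an index-$2$ non-ideal sublattice of $\OO_K$, exactly as in Theorem~\ref{thm:F}, with $T(\Lambda_1)=16\,T(\OO')$. I would split $\OO'=2\OO_K\sqcup\bigl((1+\sqrt{-21})+2\OO_K\bigr)$. Since $2$ ramifies, $(1+\sqrt{-21})$ lies in $\mathfrak p_2$, and the odd-class sum can be written through $\mathfrak p_2$-lattice sums and the four class sums $P_0,P_2,P_3,P_6$. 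The ingredients are:
\begin{itemize}
\item on the $G$-side, the four grössencharacter theta series $g(\varepsilon_2,\varepsilon_3)$, obtained by inverting the $(\Zz/2)^2$ character table;
\item on the $B$-side, the genus-character factorizations $\zeta_K(2)$-type products $L(\chi_{d_1},2)L(\chi_{d_2},2)$ over the decompositions $-84=d_1d_2$;
\item the homothety factors for $\mathfrak p_2$, namely $1/4$ on both sides, with the sign $\psi(\mathfrak p_2)$ entering the $G$-side.
\end{itemize}

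Next I need the four level-$84$ functional equations $M=\frac{84\sqrt{84}}{(2\pi)^3}\cdot2L(\cdot,3)$, with the normalization fixed as in Theorem~\ref{thm:B}. Their root numbers are $w=+1$ from \texttt{cert\_fricke.py}. The level is exact by Table~\ref{tab:levels}, since the conductor is $(1)$. I also need the Dirichlet closed forms for the even characters, which are to be absorbed into the $d_3$ and $d_4$ terms.

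With these inputs, I would carry out an exact $e$-basis coefficient comparison of $\EK_4(\tau_0)=\frac{10\sqrt{21}}{2\pi^3}(-T_1+4T_2)$. It should show three things:
\begin{itemize}
\item only $g(-1,\pm1)$ survive, so the $\varepsilon_2=+1$ forms cancel;
\item the result equals $\frac{20}{21}(M^{(3)}+M^{(4)}+8d_3+4d_4)$;
\item this differs from Theorem~\ref{thm:J} only by the signs that the $\mathfrak p_3$-class projector produces.
\end{itemize}

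The delicate points are getting the class-to-character bookkeeping signs right, which swaps $M^{(4)}$ between J and K, and checking that the $B$-side products over the genus decompositions reduce to $d_3,d_4$. I would check the genus reduction first, since it is the most likely place for a silent labeling error, and back it with the [V] interval lock and the $60$-digit cross-check.
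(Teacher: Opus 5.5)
Your proposal is essentially the paper's proof. S3 comes from \texttt{n5\_line\_cert.py} with $\tau_0$ as the top endpoint, S4 from the $\Qq(\sqrt3)$ pair lock \texttt{cert0\_n5\_e78.py}, and S5 from the four-class decomposition with the level-$84$ relation $L'(g,0)=(42\sqrt{21}/\pi^3)L(g,3)$, which yields $\EK_4(\tau_0)=(5\sqrt{21}/\pi^3)\bigl(4(A_3+A_4)+8(L_3+L_4)\bigr)$. One correction: since $-21\equiv1\pmod 2$, the lattice $2\Lambda_1=\{a+b\sqrt{-21}:a\equiv b\ (2)\}$ is closed under multiplication by $\sqrt{-21}$, so it is not a non-ideal sublattice as in Theorem~\ref{thm:F} but exactly the non-principal ideal $\mathfrak p_2=(2,1+\sqrt{-21})$; hence $T(\Lambda_1)=16\,T(\mathfrak p_2)$ directly, and $\mathrm{comb}_K=4T(\OO_K)-16T(\mathfrak p_2)$ follows from the class-lattice anchors of \S\ref{subsec:app6new}(iii) with no coset splitting and no twisted $L$-values.
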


\subsection{Station status}\label{subsec:app6stations}

\textbf{Theorem G.} S1/S2: $s_4(\tau_0)=143208-101574\sqrt2$
($|s|>256$, outside $\Ast$). S3 is free: $\tau_0=(1/2,1)$ is the top
corner of the certified path of Section~\ref{sec:app5} (the
intersection of legs A and B of \texttt{n4\_p4\_t4\_cert.py}). S4 is
quoted from \texttt{cert0\_n4\_p3\_t1t2.py} (the conjugate lock of
Theorem~\ref{thm:D}, \S\ref{subsec:app3D}). S5 is
\texttt{verify\_P1\_n5\_e1.py} ($48$ checks): $2\tau_0=1+2\ii$ gives
$\Lambda_2=\OO_2$, the conductor-$2$ order of $\Qq(\ii)$ --- the
same lattice as Theorem~\ref{thm:D}'s $\Lambda_1$ --- and
$2\Lambda_1=2\OO_2\sqcup\bigl((1+2\ii)+2\OO_2\bigr)$; the exact
track gives
$\mathrm{comb}=-20L_{16}+32L_{16}^{\mathrm{tw}}-\tfrac{27}{4}\zeta_K(2)
+16z_{V_2}$ and
$\EK_4(\tau_0)=(-200,320,-135/2,160)\cdot e$, the right-hand side
of Theorem~\ref{thm:G}; the interval lock [V3] has half-width
$1.23\times10^{-52}$ and the mp track agrees to
$1.87\times10^{-60}$. \qed

\textbf{Theorem H.} S3: $\tau_0=(1+\ii\sqrt6)/2$ lies on the
certified vertical segment of \texttt{n5\_line\_cert.py}
(point-enclosure astroid margin $3.16$; \S\ref{subsec:app6new}(i)).
S4 is quoted from \texttt{cert0\_n4\_p4\_t3.py} (the four-point
quartic lock, point tag \texttt{'C'}; \S\ref{sec:app4}). S5 is
\texttt{verify\_P1\_n5\_e2.py} ($53$ checks): $2\tau_0=1+\sqrt{-6}$
gives $\Lambda_2=\OO_K$, and
$2\Lambda_1=\{c+d\sqrt{-6}:c\equiv d\ (2)\}
=2\OO_K\sqcup C''$ with $C''=\{c,d\text{ both odd}\}$ the
$\chi_{-8}\circ N=-1$ side of the odd sums; with the anchors of
\S\ref{subsec:class2} the exact track gives
\[
  \mathrm{comb}=-T(\Lambda_1)+4T(\Lambda_2)=3T(\OO_K)-16T(C'')
  =(-14,-6,16,16,-3,-7,8,8)
\]
on the basis
$\bigl(L_1,L_2,L_1^{\mathrm{tw}},L_2^{\mathrm{tw}},\zeta_K(2),
L(\chi_8,2)L(\chi_{-3},2),L(\chi_{-8},2)L(\chi_3,2),
\mathrm{Cat}\,L(\chi_{24},2)\bigr)$ of \S\ref{subsec:class2} ---
minus four times Theorem~\ref{thm:E}'s combination with the
character-dependent pieces sign-flipped, the same flip pattern as
Theorems C/F --- and
$\EK_4(\tau_0)=(5\sqrt6/\pi^3)\,\mathrm{comb}$ ($\ImT\tau_0$
halved relative to Theorem~\ref{thm:E}), the right-hand side of
Theorem~\ref{thm:H}; [V3] half-width $1.94\times10^{-52}$, mp
agreement $1.87\times10^{-60}$. \qed

\textbf{Theorem I.} S3: $\tau_0=(1+\ii\sqrt{15})/2$ lies on the
certified vertical segment (margin $9.54$). S4 is
\texttt{cert0\_n5\_e56.py}: a $\Qq(\sqrt5)$ pair lock of $\tau_0$
together with the point $(3+\ii\sqrt{15})/6$ of the blocked entry
\#5, minimal polynomial $X^2+192303X+1185921$ (discriminant
$5\cdot85995^2$), root separation $1.92\times10^{5}$ against lock
widths $\le2.8\times10^{-48}$. S5 is \texttt{verify\_P1\_n5\_e6.py}
($42$ checks): $\tau_0=\omega$ with $\OO_K=\Zz[\omega]$, so
$\Lambda_1=\OO_K$ and $\Lambda_2=\Zz[\sqrt{-15}]=\OO_2$, the
conductor-$2$ order; $\OO_2=2\OO_K\sqcup C''$ with
$C''=\{\alpha\in\OO_K:N(\alpha)\text{ odd}\}$, evaluated by
Euler-factor removal at the two primes above $2$
($\chi_{-15}(2)=+1$); the exact track gives
$\mathrm{comb}=(6,8,3/2,11/2)$ on the basis
$\bigl(L_1,L_2,\zeta_K(2),L(\chi_5,2)L(\chi_{-3},2)\bigr)$ and
$\EK_4(\tau_0)=(5\sqrt{15}/\pi^3)\,\mathrm{comb}$, the right-hand
side of Theorem~\ref{thm:I}; [V3] half-width $1.01\times10^{-52}$,
mp agreement $1.24\times10^{-60}$. \qed

\textbf{Theorems J and K.} S3: both points lie on the certified
vertical segment: $(3+\ii\sqrt{21})/6$ at $y=\sqrt{21}/6\approx0.764$
(also on leg B of \texttt{n4\_p4\_t4\_cert.py}) and
$(1+\ii\sqrt{21})/2$ at $y=\sqrt{21}/2$, the top endpoint of the
segment, certified interior with margin $14.97$. S4 is
\texttt{cert0\_n5\_e78.py}: a $\Qq(\sqrt3)$ pair lock, minimal
polynomial $X^2+1787904X+84934656$ (discriminant
$3\cdot1032192^2$), root separation $1.79\times10^{6}$ against lock
widths $8.3\times10^{-52}$ and $2.6\times10^{-47}$. S5 is
\texttt{verify\_P1\_n5\_e78.py} ($71$ checks): here the $\EK_4$
lattices are \emph{class} lattices, not orders ---
$T(\Lambda_1)=1296\,T(\mathfrak p_6)$,
$T(\Lambda_2)=81\,T(\mathfrak p_3)$ for Theorem~\ref{thm:J} and
$T(\Lambda_1)=16\,T(\mathfrak p_2)$, $T(\Lambda_2)=T(\OO_K)$ for
Theorem~\ref{thm:K} --- and with the four-class anchors of
\S\ref{subsec:app6new}(iii) the combinations collapse to
\begin{align*}
  \mathrm{comb}_J&=-1296\,T(\mathfrak p_6)+324\,T(\mathfrak p_3)
   =36(A_3-A_4)+72\bigl(L_3-L_4\bigr),\\
  \mathrm{comb}_K&=-16\,T(\mathfrak p_2)+4\,T(\OO_K)
   =4(A_3+A_4)+8\bigl(L_3+L_4\bigr),
\end{align*}
with $L_i$ the values $L\bigl(g(\varepsilon_2,\varepsilon_3),3\bigr)$;
$\EK_4$ contributes the factors $5\sqrt{21}/(3\pi^3)$ and
$5\sqrt{21}/\pi^3$ respectively, and the uniform Fricke relation of
\S\ref{subsec:app6new}(iii) closes the assembly. The interval locks
[V5]/[V6] have half-widths $5.09\times10^{-52}$ and
$1.70\times10^{-52}$; the mp residuals are $0$ and
$2.49\times10^{-60}$. \qed

\subsection{Three methodological additions}\label{subsec:app6new}

(i) \emph{One line, one certificate.}
\texttt{n5\_line\_cert.py} certifies the entire vertical segment
$\ReT\tau=1/2$, $0.702\le\ImT\tau\le\sqrt{21}/2$ in a single run:
the anchor disk (one box, margin $0.29143$, certified $|s_4|>256$),
the horizontal leg $y=1$ ($2$ blocks, minimum margin $0.14759$) and
the vertical leg ($6$ blocks, minimum margin $7.29\times10^{-3}$;
global minimum $0.0072856708$ over the $8$ blocks), with
interval-certified interior-point verdicts at the three targets on
the line (margins $3.16$, $9.54$, $14.97$) and $23/23+5/5$
self-tests. The single certificate covers the CM points of
Theorems~\ref{thm:H}, \ref{thm:I}, \ref{thm:J} and~\ref{thm:K}
simultaneously --- and also the point $\tau=(1+\sqrt{-7})/2$ of
Remark~\ref{rem:fei} ($s=-3969$), whose ordinate
$\sqrt7/2\approx1.323$ lies inside the segment.

(ii) \emph{The $\Qq(\sqrt{-15})$ orbit trap.} For discriminant $-15$
the principal form is $x^2+xy+4y^2$, \emph{not} $x^2+15y^2$: the
latter, with $3x^2+5y^2$, belongs to the order of discriminant
$-60$, and newforms assembled as $P\pm Q$ from that wrong orbit fail
every Hecke and Fricke check. The correct construction uses the
field form and the $\mathfrak p_2$-elements
$\{c^2+15d^2=8n,\ c\equiv d\ (4)\}$:
$a_1=P+(\bar\omega/8)R$, $a_2=P-(\bar\omega/8)R$; the genus
character flips the non-principal part (also at the ramified primes
$3,5$, where the naive $\chi_5$-twist formula fails); the
$\sqrt{-15}$ parts cancel exactly and the coefficients are integral
(Hecke/CM checks pass, Fricke ratio $+1$). Full details in the
header of \texttt{verify\_P1\_n5\_e6.py}.

(iii) \emph{Class number four: the four-class anchor decomposition.}
For $K=\Qq(\sqrt{-21})$ the class group coincides with the genus
group $(\Zz/2)^2$, and the Dirichlet side of the lattice sums
decomposes into the three genus characters with discriminant pairs
$(-4,21)$, $(-3,28)$, $(-7,12)$, with anchors
$A_4=L(\chi_{-4},2)L(\chi_{21},2)$,
$A_3=L(\chi_{-3},2)L(\chi_{28},2)$,
$A_7=L(\chi_{-7},2)L(\chi_{12},2)$. The class-lattice sums are
\begin{align*}
  T(\OO_K)&=\tfrac12(z_K+A_4+A_3+A_7)+(L_1+L_2+L_3+L_4),\\
  T(\mathfrak p_2)&=\tfrac18(z_K-A_4-A_3+A_7)
   +\tfrac14(L_1+L_2-L_3-L_4),\\
  T(\mathfrak p_3)&=\tfrac1{18}(z_K-A_4+A_3-A_7)
   +\tfrac19(L_1-L_2+L_3-L_4),\\
  T(\mathfrak p_6)&=\tfrac1{72}(z_K+A_4-A_3-A_7)
   +\tfrac1{36}(L_1-L_2-L_3+L_4),
\end{align*}
with $z_K=\zeta_K(2)$. The even-character values have the closed
forms $L(\chi_{28},2)=2\sqrt7\,\pi^2/49$ and
$L(\chi_{21},2)=8\sqrt{21}\,\pi^2/441$, certified by reduction to
the finite integer identities $\sum_a\chi_{28}(a)a^2=448$ and
$\sum_a\chi_{21}(a)a^2=168$; the Fricke relation
$L'(g,0)=(42\sqrt{21}/\pi^3)\,L(g,3)$ (level $84$, root number
$+1$) is uniform over the four newforms; and in the final assembly
$z_K$ and $A_7$ both cancel (\S\ref{subsec:app6stations}).

With Theorems~\ref{thm:G}--\ref{thm:K}, every entry of Samart's
Table~6 that the machine can reach is proved: twenty-three of
twenty-nine. The five remaining entries are blocked by the two
obstructions analyzed in Section~\ref{sec:boundary}.

% ============================================================
\section{Conjectures beyond Table 6}\label{sec:conj}

\subsection{Six new identities at Heegner points}\label{subsec:rayconj}

The evaluation track of the machine (station S5) applies to any CM
point, independently of the certification stations S3/S4. Applied to
the Heegner points $\tau_D=(1+\sqrt{-D})/2$ with
$D\in\{11,19,43,67,163\}$ --- the class-number-one discriminants
$D\equiv3\ (\mathrm{mod}\ 8)$ with $D>3$ (for $D=3$ see
Theorem~\ref{thm:B}), in which $2$ is inert --- and to
$\tau_{27}=(1+3\sqrt{-3})/2$, attached to the order of conductor $3$
in $\Qq(\sqrt{-3})$, it produces a \emph{uniform} answer, derived
exactly from the lattice decomposition and independently recovered
by integer-relation search. The answer involves a constant type that
does not occur in Samart's library.

\begin{definition}[The ray-class constants]\label{def:ray}
Let $K=\Qq(\sqrt{-D})$ with $D$ as above.
\begin{enumerate}
\item $M_D:=L'(g_D,0)$, where $g_D$ is the newform of weight $3$
and level $D$, CM by the principal Hecke character of $K$ (class
number one); $d_D:=L'(\chi_{-D},-1)$ as throughout.
\item $MU_D:=\dfrac{(4D)^{3/2}}{4\pi^3}
\bigl(L(\Psi_1,3)+L(\Psi_2,3)\bigr)$, where $\Psi_1,\Psi_2$ are the
two conjugate Hecke characters of $K$ of conductor $(2)$, whose
theta series are the level-$4D$ CM newform pair; by the functional
equations $MU_D=\pm\bigl(M_{4D}^{(1)}+M_{4D}^{(2)}\bigr)$, the sign
being determined by the root numbers of the pair.
\item (\emph{New type}) For $D\in\{11,19,43,67,163\}$:
$DU_D:=\dfrac{3D^{3/2}}{2\pi^3}\,BU_D$,
$BU_D:=\dfrac{2B_1-B_a-B_b}{2}$, where
$B_r:=\displaystyle\sum_{\alpha\in r+2\OO_K}N(\alpha)^{-2}$
are the partial zeta values of the ray classes modulo $(2)$ and
$1,a,b$ are representatives of the three classes; equivalently
$BU_D$ is the corresponding $L(2,\cdot)$-combination of the
order-$3$ ray class characters of the conductor-$2$ order
$\OO_2$, whose class group is $\Zz/3\Zz$.
\item For $D=27$: $M_{27}:=L'(g_{27},0)$ for the level-$27$ newform
CM by the order of discriminant $-27$;
$DU_{27}:=\dfrac{9\sqrt3}{2\pi^3}\,BU_{27}$, with $BU_{27}$ the
same combination of the partial zeta values of the ray classes
modulo $(2)$ \emph{in the conductor-$3$ order $\OO_3$} (the ring
class group of the conductor-$6$ order is again $\Zz/3\Zz$);
$DX_{27}:=\dfrac{9\sqrt3}{2\pi^3}\,X_{27}$,
$X_{27}:=\displaystyle\sum_{\gamma\in\omega+\OO_3}N(\gamma)^{-2}$,
the partial zeta value of the coset of $\omega=(1+\sqrt{-3})/2$ in
$\OO_3$.
\end{enumerate}
\end{definition}

\begin{conjecture}[The Heegner-point identities]\label{conj:ray}
For $D\in\{11,19,43,67,163\}$,
\begin{equation}\label{eq:raymain}
n_4\bigl(s_4(\tau_D)\bigr)
=\frac{10}{9D}\bigl(36M_D+12MU_D+3d_D+8DU_D\bigr),
\qquad \tau_D=\frac{1+\sqrt{-D}}2,
\end{equation}
and
\begin{equation}\label{eq:ray27}
n_4\bigl(s_4(\tau_{27})\bigr)
=\frac{10}{81}\bigl(12M_{27}+4MU_{27}+81d_3-27DX_{27}+72DU_{27}\bigr),
\qquad \tau_{27}=\frac{1+3\sqrt{-3}}2.
\end{equation}
\end{conjecture}

\begin{remark}[The $D=27$ identity simplifies]\label{rem:dx27}
The constant $DX_{27}$ turns out \emph{not} to be new: the
evaluation machine gives
\[
  DX_{27}=\frac{56}{27}\,d_3
\]
to $120$ digits (the partial zeta $X_{27}$ over the ramified prime
$3$ reduces to $L(\chi_{-3},2)$), so \eqref{eq:ray27} is
equivalently
\[
n_4\bigl(s_4(\tau_{27})\bigr)
=\frac{10}{81}\bigl(12M_{27}+4MU_{27}+25d_3+72DU_{27}\bigr).
\]
The genuinely new constant in Conjecture~\ref{conj:ray} is $DU_D$.
\end{remark}

The evidence is threefold.
(i) \emph{Exact derivation.} For $D\equiv3\ (8)$ the $\EK_4$ lattice
pair is $\Lambda_1=\OO_K$, $\Lambda_2=\OO_2$ with
$\OO_2=2\OO_K\sqcup(1+2\OO_K)$ --- two cosets, not four:
$\omega+2\OO_K\not\subset\OO_2$ --- and
$\EK_4(\tau_D)=(10y_D/\pi^3)\bigl(-T(\OO_K)+4T(\OO_2)\bigr)$,
$y_D=\sqrt D/2$. The $B/G$-decomposition
$T(\OO_K)=2\zeta_K(2)+4L(g_D,3)$ and
$-T(\OO_K)+4T(\OO_2)
=\zeta_K(2)+\tfrac83BU_D+2L(g_D,3)+\tfrac{16}3U_D$,
with $U_D:=L(\Psi_1,3)+L(\Psi_2,3)$, evaluates this exactly;
conversion to the $(M_D,MU_D,d_D,DU_D)$ units via the functional
equations gives the displayed coefficients of \eqref{eq:raymain}
(for instance the $M_D$-coefficient:
$\tfrac{5\sqrt D}{\pi^3}\cdot2\cdot\tfrac{4\pi^3}{D^{3/2}}
=\tfrac{40}{D}=\tfrac{10}{9D}\cdot36$).
For $D=27$ the pair is $\OO_3$, $\OO_6=2\OO_3\sqcup(1+2\OO_3)$ with
$B(\OO_3)=6\zeta_K(2)-2X_{27}$ (the unit group $\mu_6$ and the
ramified prime $3$), giving $(40/27,40/81,10,-10/3,80/9)$ on the
basis $(M_{27},MU_{27},d_3,DX_{27},DU_{27})$.
(ii) \emph{Integer-relation independence.} On the basis
$[\EK_4,M_D,MU_D,d_D,DU_D]$ at $60$ digits, PSLQ returns the single
relation $[-9D,\allowbreak360,\allowbreak120,\allowbreak30,
\allowbreak80]$ for each of the five discriminants --- the same
coefficients, found with no lattice input.
(iii) \emph{Control.} The same code with $D=7$ (where $2$ splits and
no ray constants occur) reproduces Samart's Table 6 entry
$\EK_4=\frac{10}{7}(40M_7+d_7)$ with residual $0$
(\texttt{gen\_conj\_fit.py}); the self-checks of the shifted
$B/G$-row machine (Euler-odd identities, $X(\omega)=X(2\omega)$ for
$D=27$) hold to $10^{-60}$ (\texttt{gen\_conj\_fit4.py},
\texttt{gen\_conj\_fit5.py}).

\begin{table}[ht]
\centering\footnotesize
\begin{tabular}{@{}clc@{}}
\toprule
$D$ & $s_4(\tau_D)$ & residual\\
\midrule
$11$ & $-33402.27346342154870075765538\ldots$ & $2.49\times10^{-60}$\\
$19$ & $-885375.78261746786055293626934\ldots$ & $6.22\times10^{-60}$\\
$27$ & $-12288639.98433442351563438184816\ldots$ & $2.49\times10^{-60}$\\
$43$ & $-884736639.99978240761772308093819\ldots$ & $0$\\
$67$ & $-147197952639.99999869215572454704832\ldots$ & $4.98\times10^{-60}$\\
$163$ & $-262537412640768639.99999999999926673\ldots$ & $9.96\times10^{-60}$\\
\bottomrule
\end{tabular}
\caption{The six Heegner-point parameters ($60$ digits; truncated)
and the residuals $|n_4(s_4(\tau_D))-\mathrm{RHS}|$; the PSLQ
re-checks ($D\neq27$) give residuals $\le1.4\times10^{-56}$. Values of the
constants to $15$ digits:
$(M_{11},MU_{11},d_{11},DU_{11})
=(0.257838994176722,\allowbreak5.13771317570429,\allowbreak
2.64058735875153,\allowbreak3.03409158565142)$;
$(M_{163},MU_{163},d_{163},DU_{163})
=(18.0996439429792,\allowbreak272.773494341141,\allowbreak
109.694008111980,\allowbreak203.758012415278)$; the full tables are
in the repository.}
\label{tab:ray}
\end{table}

\begin{remark}[Status]\label{rem:raystatus}
Conjecture~\ref{conj:ray} is one certification short of a proof.
All six parameters satisfy $|s_4(\tau_D)|>256$
(Table~\ref{tab:ray}), and a $40$-digit scan of the segment
$\ImT\tau_D\le\ImT\tau\le\ImT\tau_D+8$ of the vertical ray
$\ReT\tau=1/2$ finds the minimum of
$|s_4|$ at the bottom endpoint $\tau_D$ itself
(\texttt{gen\_conj\_s4.py}); once the ray is certified by interval
arithmetic --- a one-leg instance of the \texttt{cert2} machine of
\S\ref{subsec:cert2} --- the anchor Lemma~\ref{lem:anchor} applies
on a neighbourhood of $\tau_D$ and the exact evaluation above
becomes a proof. We have not yet run that certification, and we do
not know minimal polynomials of the parameters $s_4(\tau_D)$: PSLQ
at $110$ digits excludes degree $\le3$ with coefficients
$\le10^8$ and degree $4$ with coefficients $\le10^6$, so station
S4 is open. Curiously
$s_4(\tau_D)\approx j(D)-640$ with an error decaying like
$\e^{-c\sqrt D}$ ($2.2\times10^{-4}$ at $D=43$;
$7.3\times10^{-13}$ at $D=163$, the $\e^{\pi\sqrt{163}}$ effect;
\texttt{gen\_conj\_minpoly.py}).
\end{remark}

\begin{remark}[A constant type beyond Samart's library]
\label{rem:newconstant}
Samart's constant library consists of the values $M_N$ and $d_k$.
The constant $DU_D$ does not belong to it: PSLQ on
$[DU_{11},d_{11},d_3,d_4,d_8]$ with coefficient bound $10^7$
returns no relation. The arithmetic reason is that for
$D\equiv3\ (8)$ the prime $2$ is inert and the conductor-$2$ order
has class group $\Zz/3\Zz$, whose characters have order $3$ ---
not quadratic Dirichlet characters. It is consistent with this
picture that the Heegner-point entries of Samart's Table 6 all have
$D\equiv5,7\ (8)$, with the single exception of the rational value
$s=-144$ attached to $D=3$: there the unit group $\mu_6$ collapses
the conductor-$2$ ring class number to $1$, and no order-$3$
characters arise (Theorem~\ref{thm:B} indeed uses only $M_{12}$ and
$d_3$). For $D\equiv5\ (8)$ the conductor-$2$ ring class groups are
$2$-primary ($\Zz/4\Zz$ for $D=5,13$;
$\Zz/4\Zz\times\Zz/2\Zz$ for $D=21$), so the order-$3$ ray
constants of Conjecture~\ref{conj:ray} are genuinely specific to
$D\equiv3\ (8)$, $D>3$. Conjecture~\ref{conj:ray}
is thus the first batch of identities \emph{predicting} a new
constant type rather than repackaging known ones.
\end{remark}

\subsection{An umbrella conjecture}\label{subsec:umbrella}

We record the conjectural framework into which the results of this
paper fit, and two refinements suggested by the present data. It is
a standard expectation, implicit in \cite{De} and made formal by
Trieu's conditional theorem \cite[Thm.~0.2]{Tr1}: for a
three-variable polynomial $P$ whose zero locus $W_P$ has genus $1$,
with a wedge decomposition of the symbol $xyz^3$
\cite[(0.8)]{Tr1}, Beilinson's conjecture \cite{Bei} implies
$m(P)=m(\widetilde P)+a\,L'(E,-1)$ up to Bloch--Wigner
dilogarithmic corrections, which in some cases simplify to Dirichlet
$L$-values (the $K3$-surface sequel \cite{Tr2} is of the same
shape). Thus every Mahler measure identity of Boyd--Samart type is
expected to be an instance of
Beilinson's conjecture for the corresponding motive, with the exact
rational coefficients governed by Bloch--Kato \cite{BK}; see also
\cite{ZaG} for the elliptic-polylogarithm picture and \cite{BZ} for
a systematic account. For the modular families of the present paper
the special fibre is a singular K3 surface \cite{BerK3}: by
Shioda--Inose \cite{SI} its $L$-function splits off a weight-$3$ CM
newform (the $M$-terms of our identities), while the Dirichlet terms
fall on the proved line of the $K_3$-theory of imaginary quadratic
fields \cite{Za}. Fei's theorem \cite[Thm.~0.2]{Fei} --- $179$
identities of the same two-term shape, covering $23$ of his $25$
Landau--Ginzburg families, in which the newform levels satisfy
$-D/d$ a square and the Dirichlet discriminants $d'$ satisfy
$d'\mid D$ --- is the corresponding theorem-level catalogue; and the
$25$ families known today do not exhaust the rational weight-$3$ CM
newforms classified by Sch\"utt \cite{Sch}. But the literature
contains no family-level conjecture predicting the discriminant
structure (which conductors occur) or the coefficient structure (how
large the denominators are). Theorems~A--K are unconditional CM
instances of this framework, and Conjecture~\ref{conj:ray} is its
first evidence beyond the $2$-primary world; we propose the
following two conjectures to organize them. They are not
restatements of Beilinson's conjecture: their predictions
(discriminant structure, denominators, no-formula cases) are not
implied by it.

The mechanism expected to produce the shape below is that the
holomorphic Mahler function $\mt(c(\tau))$ is a Beilinson regulator
pairing of a motivic class attached to the family with the Deninger
cycle --- cf.\ \cite{Tr1}, where this is proved for single
polynomials. We do not attempt to make the motivic clause precise
here; the conjecture records only its falsifiable arithmetic
consequence.

\begin{conjecture}[CM specialization: shape and discriminant
structure]\label{conj:umbA}
Let $P_c$ be a Laurent-polynomial family with a Hauptmodul
parametrization $c=c(\tau)$ by a genus-$0$ congruence group
$\Gamma$, whose fibres are elliptic curves or K3 surfaces. Then for
every CM point $\tau_0$ of discriminant $D$ with
$c(\tau_0)\in\bar{\Qq}$, the value $\m(P_{c(\tau_0)})$ lies in the
$\Qq$-span of the values $L'(g,0)$ and $L(\chi,2)$, where $g$ runs
over the weight-$3$ newforms CM by Hecke characters of the orders of
discriminant $D$, and $\chi$ over the ray class characters of those
orders. We expect the occurring levels and conductors to be
determined by the level of $\Gamma$ and by $D$, but we do not know
the precise rule; making it explicit is part of the problem, not a
claim of the conjecture.
\end{conjecture}

The new content: (1) the occurring characters are delimited a priori
--- Fei's constraint ($d\mid D$, $D/d$ a square) is the
genus-theoretic, $2$-primary part, while Conjecture~\ref{conj:ray}
shows the odd part of the ring class group entering through
order-$3$ ray characters, and the conjecture predicts
the same for every
order; (2) completeness: every algebraic singular modulus in the
$c$-image carries a formula; (3) a Galois-orbit form: via the
determinant machine of \cite{ST}, the single-point identities are
the $n=1$ slices of $n\times n$ determinant identities over CM
Galois orbits.

\begin{conjecture}[Denominators are controlled by local
data \textup{(}qualitative form\textup{)}]\label{conj:umbB}
In every identity of Conjecture~\ref{conj:umbA},
the reduced denominators of the rational coefficients are bounded
in terms of local data of the special fibre and of the order; the
present data suggest the Tamagawa numbers \textup{(}component
groups\textup{)} of the special fibre at its bad primes, the unit
index of the order, and the $2$-torsion of its class group as the
controlling quantities. No precise recipe is known, and we state
the conjecture deliberately at this qualitative level; the
explicit recipe is the associated open problem.
\end{conjecture}

This is the family analogue of the Bloch--Grayson modification of
Beilinson's conjecture \cite{BloG} (regulators taken on integral
elements; numerically substantiated for hyperelliptic curves in
\cite{DdJZ}) --- the Mahler slice of Bloch--Kato; Boyd already
expected the Bloch--Kato theory to predict the exact rational
factor \cite{Boyd98}. Its predictions: (i) a priori denominator
bounds for the unproved entries, falsifiable at scale by
integer-relation search; (ii) a no-formula criterion --- if the
special fibre's transcendental motive is not of CM type, no identity
of this shape exists --- consonant with \cite{Sch} and with Fei's
remark; (iii) the same local integer controls the denominators of
the higher-derivative versions $L^{(k)}$.

\begin{remark}[First data]\label{rem:firstdata}
Theorems~A--K supply twelve proved identities, with overall
denominators
$3$, $4$, $7$, $8$, $14$, $15$, $16$, $21$, $24$, $28$, $48$.
Conjecture~\ref{conj:ray}
supplies six conjectured ones: $9D$ for the five discriminants of
\eqref{eq:raymain}, and $81$. We caution that the factor $D$ in
$9D$ is a normalization artifact: it enters through the $D^{3/2}$
factor converting $L(g_D,3)$ into $M_D$ (in the natural
$(\zeta_K(2),L(g_D,3),BU_D)$ units no $D$-denominator appears), so
it carries no Tamagawa information. The nontrivial content for
Conjecture~\ref{conj:umbB} is the residual factor $9$ (and $81$).
\end{remark}

% ============================================================
\section{The boundary of the method}\label{sec:boundary}

We collect the precise limits of the machine, all visible already in
Table~\ref{tab:open}.

\subsection{Two-dimensional critical images}

The decisive invariant is the dimension of the critical image
(Remark~\ref{rem:2D}). In the $n_2$ family the image is the slit
$[0,64]$, which does not separate $\Cc$, and every conjectured entry
was reachable: the family is complete \cite{Zf,ZGQ,GPQ}. In the
$n_4$ family the image is the astroid disc \eqref{eq:astroid}, which
has interior. A CM parameter with $c(\tau_0)$ in the interior of
$\Ast$ lies inside an open bad region of the $\tau$-plane (open
mapping), no certified path can approach it, and
Theorem~\ref{thm:main} is silent. The extreme case is the refuted
conjecture $n_4(81)=40M_7$ \cite{Zf2}: $c=3$ is an interior lattice
point of the astroid, the conjectured value equals the exact EK
evaluation at the attached CM point on a \emph{wrong sheet}, and
direct torus integration gives the different value
$n_4(81)=4.1655349907533676508(5)$. Entries \#4 and \#5 of
Table~\ref{tab:open} have $c(\tau_0)$ inside $\Ast$ ($s\approx-2.45$
and $s\approx-6.17$ respectively) and are blocked by the same
mechanism; entry \#5 is moreover exterior (see the next subsection).

\subsection{The wrong sheet below \texorpdfstring{$\ImT\tau=1/\sqrt2$}{Im tau = 1/sqrt2}}

Even where the critical image is avoided, the EK series itself can
fail. A systematic scan (\texttt{diag\_n4\_}\allowbreak\texttt{astroid.py}) shows
$\EK_4(\tau)\neq4\m(P+c(\tau))$ \emph{at every tested point} below
the line $\ImT\tau=1/\sqrt2$ --- including points with $c(\tau)$ real and
larger than $4$, where the ${}_5F_4$ evaluation converges and the
Mahler side is perfectly smooth (e.g.\ a difference of $11.4$ at
$\tau=0.3\ii$). The difference grows continuously from $0$ at the
line; there is no jump. The mechanism is the same wrong-sheet
phenomenon analyzed for the $n_2$ family in \cite[Remark~5.6]{Zf}:
the single-valued series $\EK_4$ follows a companion branch of
$\mt$ once the branch locus is crossed. The propagation of
Theorem~\ref{thm:main} cannot cross, because the identity theorem
fixes $\Psi\equiv0$ only on the anchor's component $W$: along the
imaginary axis the point $\tau=\ii/\sqrt2$ (where $c=4$, a cusp of
$\Ast$) separates $W$ from the lower component $W'$, and on $W'$ the
difference $H=F-G$ is non-constant. Entries \#3, \#9, and \#10 of
Table~\ref{tab:open} (pure imaginary, $y=0.612$, $0.154$, and
$0.463$, with $c>4$ real in each case) are blocked at this
\emph{series} level: the conjectured
identity --- a statement about the true Mahler measure, which
Samart's PSLQ evidence supports --- has no valid series input for
station S5, and proving it requires a corrected expression, e.g.\ a
modified Mahler measure of Samart--Tao $\widetilde n$-type \cite{ST},
whose design principle (choose the linear combination so that the
non-closed Deninger path \cite{De} becomes a meaningful pairing) is
compatible with stations S2/S4 of the machine.

\subsection{Open directions}

\begin{itemize}
\item \emph{The formerly ready entries} \#1, \#2, \#6, \#7, \#8 are
  now proved as Theorems~\ref{thm:G}--\ref{thm:K}
  (Section~\ref{sec:app6}); Table~6 has no reachable entries left.
\item \emph{Six Heegner-point identities one leg short of proof}:
  Conjecture~\ref{conj:ray} is a single one-leg \texttt{cert2}
  certification away from a proof (Remark~\ref{rem:raystatus}); the
  exact coefficients and the evaluation track are already in place.
\item \emph{The blocked entries} \#3, \#4, \#5, \#9, \#10: for \#3,
  \#9, \#10 (wrong sheet below $\ImT\tau=1/\sqrt2$) a corrected
  series input is needed; for \#4 and \#5 ($c(\tau_0)$ in the
  interior of the critical astroid) an interior-point formula of
  $\widetilde n$-type is needed before the machine has anything to
  certify; the $\widetilde n$ framework of \cite{ST} is the natural
  candidate for both.
\item \emph{Samart's three-modular-$L$-value hypothesis}
  $s=16+1600\sqrt[3]2-1280\sqrt[3]4$ \cite{Sa15}: the conjectured
  value involves three modular $L$-values and no attached CM point in
  the $s_4$-image is known; the machine's station S1 has no input.
\item \emph{The Boyd-lineage entries}: the two-variable
  conjectures of Boyd's tables (e.g.\ conductor-$14$ and related
  families) have no known CM evaluation, so the machine does not
  apply; regulator-based approaches \cite{De,BZ}, and in particular
  deconditioning of the currently conditional identities in the
  Brunault--Zudilin framework \cite{BZ}, remain the promising route.
\end{itemize}

% ============================================================
\appendix
\section{Exact data for Theorems A--K}\label{app:data}

This appendix records, in one place, every definition and exact datum
that the certification scripts use but the main text only cites: the
modular parameter, the newforms and grössencharacters, the ray-class
and genus characters, the combination vectors of the lattice
decompositions, the closed Dirichlet values, and the exact external
inputs. Everything here is either an exact integer/\texttt{Fraction}
identity verified in the scripts (the box checks are finite mod
arithmetic, not sampling), or a quoted theorem flagged as such.

\subsection{The modular parameter and the \texorpdfstring{$U$}{U}-series}
\label{app:s4}

The modular parameter is
\[
  s_4(\tau)=\Bigl(\frac{\eta(2\tau)}{\eta(\tau)}\Bigr)^{24}
            \bigl(16W^4+W^{-4}\bigr)^4,
  \qquad
  W(\tau)=\frac{\eta(\tau)\,\eta(4\tau)^2}{\eta(2\tau)^3},
\]
invariant under the Fricke involution $W_2$:
$s_4(-1/(2\tau))=s_4(\tau)$ (numerically observed to $10^{-33}$ at
sample points, \texttt{cert0\_n4\_p4\_t3.py}; used for locating
certificates only, not part of the certified chain). Its
$q$-expansion at $\ii\infty$ has integer coefficients (exact to
$q^{30}$ in the \texttt{cert0} scripts). The CM values of $s_4$
used in this paper are determined exactly by the \texttt{cert0}
track of \S\ref{app:cert0} (minimal polynomial plus interval
lock), independently of any general integrality theorem; that such
values are algebraic integers is explained by Shimura's CM theory
(a modular function holomorphic on $\Hh$ with integral
$q$-expansions at all cusps is integral over $\Zz[j]$
\cite[\S11]{Cox}), on which we do not rely.
The $U$-series side is
\[
  \EK_4(\tau)=\frac{10\,\ImT\tau}{\pi^3}\,
  \bigl(-T(\Lambda_1)+4\,T(\Lambda_2)\bigr),\qquad
  \Lambda_1=\Zz+\tau\Zz,\quad \Lambda_2=\Zz+2\tau\Zz,
\]
where $T(\Lambda)=B(\Lambda)+2G(\Lambda)$ via the pointwise identity
\[
  F(z)=\frac{4(\ReT z)^2}{|z|^6}-\frac1{|z|^4}
      =\frac{2\ReT(z^2)}{|z|^6}+\frac1{|z|^4},
\]
with $B$, $G$ the absolutely convergent lattice sums of
\S\ref{subsec:fam}; the homothety rule is $T(2\Lambda)=T(\Lambda)/16$.

\subsection{The newforms and their grössencharacters}\label{app:newforms}

\textbf{Eta-quotient forms.}
$g_8=\eta(\tau)^2\eta(2\tau)\eta(4\tau)\eta(8\tau)^2\in
S_3(\Gamma_0(8),\chi_{-8})$, the theta series of the conductor-$(1)$
grössencharacter $\psi((\alpha))=\alpha^2$ of $\Qq(\sqrt{-2})$:
$2g_8=\sum'_{\alpha}\alpha^2q^{N(\alpha)}$ (exact integers to $q^{60}$,
Sturm bound $3$).
$g_{16}=\eta(4\tau)^6\in S_3(\Gamma_0(16),\chi_{-4})$, theta of the
conductor-$(2)$ grössencharacter of $\Qq(\ii)$:
$2g_{16}=\sum_{a\equiv1\,(2)}\alpha^2q^{N(\alpha)}$, using that the
units have image $\{1,\ii\}$ in $(\OO_K/2)^\times$, so every ideal
prime to $2$ has exactly two generators $\equiv1\bmod2$ (and
$\sum_{u\in\mu_4}u^2=0$, the ``$\mu_4$ annihilation'', gives
$G(\OO_K)=0$).
$g_{12}=\eta(2\tau)^3\eta(6\tau)^3\in S_3(\Gamma_0(12),\chi_{-3})$,
theta of the conductor-$(2)$ grössencharacter of $\Qq(\sqrt{-3})$
($2$ inert, $N((2))=4$): $2g_{12}=\sum_{a\equiv1,\,b\equiv0\,(2)}
\ReT(\alpha^2)\,q^{N(\alpha)}$, $\alpha=a+b\omega$.

\textbf{$K=\Qq(\sqrt{-6})$ (class number $2$).}
The CM subspace of $S_3(\Gamma_0(24),\chi_{-24})$ is
two-dimensional, spanned by the two
CM newforms $g_1=P-Q$ (\textsf{24.3.h.a}) and $g_2=P+Q$
(\textsf{24.3.h.b}); the full newspace has dimension $6$
(LMFDB), but only the CM forms enter. Here
\[
  P(n)=\tfrac12{\sum_{a^2+6b^2=n}}'(a^2-6b^2),\qquad
  Q(n)=\tfrac14{\sum_{2x^2+3y^2=n}}'(4x^2-6y^2)
\]
are the class theta sums of the principal and non-principal classes
($\mathfrak p_2=(2,\sqrt{-6})$, $\mathfrak p_2^2=(2)$). The labelling
convention $\psi(\mathfrak p_2)=-2$ is pinned by the exact coefficient
checks $a_1(1..6)=(1,-2,3,4,2,-6)$, $a_2(1..6)=(1,2,-3,4,-2,-6)$.

\textbf{$K=\Qq(\sqrt{-15})$ (class number $2$; $2$ \emph{splits}).}
$\omega=(1+\sqrt{-15})/2$, $\omega^2=\omega-4$, $N(\omega)=4$;
$(2)=\mathfrak p_2\mathfrak p_2'$ with
$\mathfrak p_2^2=(\omega)$. The conductor-$(1)$ grössencharacters are
$\psi((\alpha))=\alpha^2$ on principals, $\psi(\mathfrak p_2)=\omega$,
$\psi(\mathfrak p_2')=\bar\omega$ (consistent:
$\psi(\mathfrak p_2)^2=\omega^2=\psi((\omega))$), and its genus twist.
With
\[
  P(n)=\tfrac18\!\!\sum_{\substack{c^2+15d^2=4n\\ c\equiv d\,(2)}}\!\!(c^2-15d^2),
  \qquad
  R(n)=\!\!\sum_{\substack{c^2+15d^2=8n\\ c\equiv d\,(4)}}
        \Bigl(\tfrac{c^2-15d^2}{4}+\tfrac{cd}{2}\sqrt{-15}\Bigr)
  =R_r+R_i\sqrt{-15},
\]
the two newforms (\textsf{15.3.d.a}, \textsf{15.3.d.b}) are
$a_1=P+(\bar\omega/8)R$ and $a_2=P-(\bar\omega/8)R$; the $\sqrt{-15}$
parts cancel exactly, the coefficients are rational integers, and
$a_1(2)=+1$, $a_2(2)=-1$, $a_1(5)=+5$, $a_2(5)=-5$ pin Samart's
numbering $\#2=g_1$, $\#1=g_2$. Warning recorded for the machine: the
principal form of discriminant $-15$ is $x^2+xy+4y^2$, \emph{not}
$x^2+15y^2$; forms assembled from the latter orbit fail every Hecke
check.

\textbf{$K=\Qq(\sqrt{-21})$ (class number $4$, genus group
$(\Zz/2)^2$).} The ideal classes are $1,[\mathfrak p_2],
[\mathfrak p_3],[\mathfrak p_6]$ with $\mathfrak p_6=\mathfrak
p_2\mathfrak p_3$ and $\mathfrak p_2^2=(2)$, $\mathfrak p_3^2=(3)$,
$\mathfrak p_6^2=(6)$. The CM subspace of
$S_3(\Gamma_0(84),\chi_{-84})$ is four-dimensional with newforms
(\textsf{84.3.h.a}--\textsf{84.3.h.d}; the full newspace has
dimension $28$)
\[
  g(e_2,e_3)=P_0+e_2P_2+e_3P_3+e_2e_3P_6,\qquad e_2,e_3\in\{\pm1\},
\]
where
\begin{align*}
  P_0(n)&=\tfrac12{\sum_{a^2+21b^2=n}}'(a^2-21b^2),&
  P_3(n)&=\tfrac12{\sum_{3a^2+7b^2=n}}'(3a^2-7b^2),\\
  P_2(n)&=\tfrac14{\sum_{2x^2+2xy+11y^2=n}}'\bigl((2x+y)^2-21y^2\bigr),&
  P_6(n)&=\tfrac1{12}{\sum_{6x^2+6xy+5y^2=n}}'\bigl((6x+3y)^2-21y^2\bigr).
\end{align*}
All grössencharacter values are real: $\psi(\mathfrak p_2)=\pm2$,
$\psi(\mathfrak p_3)=\pm3$, $\psi(\mathfrak p_6)=\psi(\mathfrak
p_2)\psi(\mathfrak p_3)$; the convention $\psi(\mathfrak p_2)=+2$,
$\psi(\mathfrak p_3)=+3$ for $g(1,1)$ is pinned by
$a_{(1,1)}(2)=+2$, $a_{(1,1)}(3)=+3$. Samart's
$M_{84}^3=L'(g(-1,+1),0)$ and $M_{84}^4=L'(g(-1,-1),0)$.

\subsection{Ray-class projectors, conductor criteria, and
witnesses}\label{app:ray}

The twisted thetas use the projections $(1\pm\chi\circ N)/2$ with the
exact criteria
\begin{align*}
  \Qq(\sqrt{-2}):&\quad \chi_8\bigl(N(a+b\sqrt{-2})\bigr)=+1
    \iff b\ \text{even}\ (a\ \text{odd});\\
  \Qq(\ii):&\quad \chi_8\bigl(N(a+b\ii)\bigr)=+1
    \iff b\equiv0\pmod4\ (a\ \text{odd});\\
  \Qq(\sqrt{-6}):&\quad \chi_{-8}\bigl(N(a+b\sqrt{-6})\bigr)=+1
    \iff b\ \text{even}\ (a\ \text{odd}).
\end{align*}
Each is well defined modulo the conductor of
Table~\ref{tab:levels}: if $\alpha\equiv1\pmod{(2)}$ then $a$ is odd
and $b$ even, so $N(\alpha)\equiv a^2\equiv1\pmod8$ and
$\chi_8(N(\alpha))=\chi_{-8}(N(\alpha))=+1$; over $\Qq(\ii)$ the same
holds with $b\equiv0\pmod4$ for $\alpha\equiv1\pmod{(4)}$. The
conductors are exact, by one witness modulo each maximal proper
divisor. Over $\Qq(\sqrt{-2})$ and $\Qq(\sqrt{-6})$ the prime $2$
ramifies, $(2)=\mathfrak p_2^2$, and \emph{every} proper ideal
dividing $(2)$ already divides $\mathfrak p_2$; hence the single
witnesses
\[
  1+\sqrt{-2}\equiv1\ (\mathfrak p_2),\ N=3,\ \chi_8(3)=-1;\qquad
  1+\sqrt{-6}\equiv1\ (\mathfrak p_2),\ N=7,\ \chi_{-8}(7)=-1
\]
exclude every smaller modulus. Over $\Qq(\ii)$,
$(4)=(1+\ii)^4$, and every proper ideal dividing $(4)$ already
divides $(1+\ii)^3$; the witness
\[
  -1+2\ii\equiv1\ \bigl((1+\ii)^3\bigr)\ \text{since }
  (1+\ii)^3=(-2+2\ii),\qquad N(-1+2\ii)=5,\ \chi_8(5)=-1
\]
excludes every smaller modulus. (Over $\Qq(\sqrt{-3})$ the
conductor $(2)$ is exact: $2$ is inert and the theta identity of
\S\ref{app:newforms} has modulus $(2)$.)

\subsection{Genus characters and \texorpdfstring{$L_K$}{L(K)}-factorizations}\label{app:genus}

The genus-character identities (exact on the relevant norm sets) are
\[
  \chi_8(N)=\chi_{-3}(N)\ \text{on $\chi_{-24}$-norms};\qquad
  \chi_5(N)=\chi_{-3}(N)\ \text{on $\chi_{-15}$-norms};
\]
\[
  \chi_{-4}\chi_{84}=\chi_{21},\qquad
  \chi_{-3}\chi_{84}=\chi_{28},\qquad
  \chi_{-7}\chi_{84}=\chi_{12},
\]
and for class number $4$ the genus eigenvalues on the classes
$1,[\mathfrak p_2],[\mathfrak p_3],[\mathfrak p_6]$ are
\[
  (+,+,+),\quad (-,-,+),\quad (-,+,-),\quad (+,-,-)
  \qquad\text{in the basis }(\chi_{-4},\chi_{-3},\chi_{-7}).
\]
The factorizations
$L_K(\chi\circ N,s)=L(\chi,s)\,L(\chi\chi_K,s)$ use the pointwise
products $\chi_8\chi_{-8}=\chi_{-4}$, $\chi_8\chi_{-4}=\chi_{-8}$,
$\chi_5\chi_{-15}=\chi_{-3}$ (exact character arithmetic).

\subsection{The combination vectors}\label{app:comb}

Each Theorem's assembly is a \texttt{Fraction}-exact identity
$\mathrm{comb}=\sum c_i b_i$ in the displayed basis (the [S] tracks):
\[
\begin{array}{c|l|l}
\text{Thm.} & \text{basis }(b_i) & \text{comb vector }(c_i)\\
\hline
C & (L_8,\,L_8^{\mathrm{tw}},\,\zeta_K(2),\,z_V) &
    (7,\,8,\,\tfrac32,\,4)\\
D & (L_{16},\,L_{16}^{\mathrm{tw}},\,\zeta_K(2),\,z_{V_2}) &
    (5,\,8,\,\tfrac{27}{16},\,4)\\
E & (L_1,L_2,L_1^{\mathrm{tw}},L_2^{\mathrm{tw}},\zeta_K(2),L_g,B_1,B_2)
  & (\tfrac72,\,\tfrac32,\,4,\,4,\,\tfrac34,\,\tfrac74,\,2,\,2)\\
G & \text{as }D & (-20,\,32,\,-\tfrac{27}{4},\,16)\\
H & \text{as }E & (-14,\,-6,\,16,\,16,\,-3,\,-7,\,8,\,8)\\
I & (L_1,L_2,\zeta_K(2),L_g) & (6,\,8,\,\tfrac32,\,\tfrac{11}{2})
\end{array}
\]
with $L_g=L(\chi_8,2)L(\chi_{-3},2)$ (E, H),
$L_g=L(\chi_5,2)L(\chi_{-3},2)$ (I),
$B_1=L(\chi_{-8},2)L(\chi_3,2)$, $B_2=\mathrm{Cat}\,L(\chi_{24},2)$;
Theorem H's vector is $-4\times$Theorem E's with the character blocks
($L_i^{\mathrm{tw}},B_i$) sign-flipped. For Theorem I the split prime
$2$ contributes the exact Euler factors
$(1\mp\omega/8)(1\mp\bar\omega/8)=\tfrac{15}{16},\tfrac{19}{16}$.
For Theorems J, K the four class-lattice anchors are
\begin{align*}
  T(\OO_K)&=\tfrac12(z_K+A_4+A_3+A_7)+(L_1+L_2+L_3+L_4),\\
  T(\mathfrak p_2)&=\tfrac18(z_K-A_4-A_3+A_7)+\tfrac14(L_1+L_2-L_3-L_4),\\
  T(\mathfrak p_3)&=\tfrac1{18}(z_K-A_4+A_3-A_7)+\tfrac19(L_1-L_2+L_3-L_4),\\
  T(\mathfrak p_6)&=\tfrac1{72}(z_K+A_4-A_3-A_7)+\tfrac1{36}(L_1-L_2-L_3+L_4),
\end{align*}
with
\[
  z_K=\zeta(2)L(\chi_{-84},2),\quad
  A_4=L(\chi_{-4},2)L(\chi_{21},2),\quad
  A_3=L(\chi_{-3},2)L(\chi_{28},2),
\]
$A_7=L(\chi_{-7},2)L(\chi_{12},2)$, and
$L_i=L(g_i,3)$; the homotheties
$T(\Lambda_1)=16\,T(\mathfrak p_2)$, $T(\Lambda_2)=T(\OO_K)$ (K) and
$T(\Lambda_1)=1296\,T(\mathfrak p_6)$, $T(\Lambda_2)=81\,T(\mathfrak
p_3)$ (J) give
\[
  \mathrm{comb}_J=36(A_3-A_4)+72(L_3-L_4),\qquad
  \mathrm{comb}_K=4(A_3+A_4)+8(L_3+L_4),
\]
in which $z_K$ and $A_7$ cancel, and
$\EK_4=(5\sqrt{21}/(3\pi^3))\,\mathrm{comb}_J$,
$\EK_4=(5\sqrt{21}/\pi^3)\,\mathrm{comb}_K$.

\subsection{Closed Dirichlet values and their exact sources}\label{app:dirichlet}

The even-character values use
$L(\chi,2)=\pi^2\tau(\chi)\sum_a\chi(a)B_2(a/f)/f$
with exact integer character sums:
\[
\begin{array}{c|c|c}
\chi & \text{conductor} & \sum_a\chi(a)a^2\\
\hline
\chi_8 & 8 & 16\\
\chi_3 & 12 & 48\\
\chi_{24} & 24 & 288\\
\chi_5 & 5 & 4\\
\chi_{28} & 28 & 448\\
\chi_{21} & 21 & 168
\end{array}
\qquad
\begin{aligned}
  L(\chi_8,2)&=\pi^2\sqrt2/16,\\
  L(\chi_3,2)&=\pi^2\sqrt3/18,\\
  L(\chi_{24},2)&=\pi^2\sqrt6/24,\\
  L(\chi_5,2)&=4\pi^2\sqrt5/125,\\
  L(\chi_{28},2)&=2\sqrt7\pi^2/49,\\
  L(\chi_{21},2)&=8\sqrt{21}\pi^2/441.
\end{aligned}
\]
(Note the conductor-$12$ trap for $\chi_3$: its period table vanishes
at even positions.) Also
$L(\chi_{-28},2)=\frac{2\pi^2}{7\sqrt7}$ ($\sum_a\chi_{-28}(a)a^2=448$),
and $d_k=L'(\chi_{-k},-1)=\frac{k^{3/2}}{4\pi}L(\chi_{-k},2)$ by the
Dirichlet functional equation.

\subsection{Sturm data and newform identification}\label{app:sturm}

The eta forms are identified by Ligozat's criteria (weight, character,
cusp orders, divisor degree) and Sturm bounds
$\frac{3}{12}[\mathrm{SL}_2(\Zz):\Gamma_0(N)]$: bounds $3$ ($N=8$),
$6$ ($N=16$), $6$ ($N=12$), all below the exact-integer comparison
order $q^{60}$ (i.e.\ coefficients $a_0,\dots,a_{60}$ inclusive).
For the class-number-$2$ and $4$ fields the logical order is as
follows. Hecke's theorem on gr\"ossencharacters (quoted;
\cite{Miyake}) applied to the explicit characters of
\S\ref{app:newforms} produces bona fide newforms of weight $3$ and
of the displayed level and nebentypus; modularity is therefore a
theorem, not a script output. The exact script checks --- the Hecke
recursion $a(p^2)=a(p)^2-\chi(p)p^2$, multiplicativity, CM
vanishing at inert primes, all to exact order $q^{60}$ --- serve
only to \emph{identify} the assembled ideal-class theta sums with
those gr\"ossencharacter theta series, to fix the numbering, and
(via the Sturm bound, once both sides are known to lie in the same
finite-dimensional space) to promote the finite comparison to an
exact equality; on their own they would not imply the global Hecke
property. The LMFDB labels of \S\ref{app:newforms} record the
resulting identifications.

\subsection{Tail bounds for the interval tracks}\label{app:tails}

The [V] locks use only self-contained coefficient bounds:
$|a_n|\le(n+5)^5 2^{n/2}$ for the eta products (dominating every
factor $(1-q^{dn})^{r_d}$ by $(1-q^{dn})^{-|r_d|}$ and
$p(i)\le2^i$); $|a(n)|\le6n^3$ for $\Qq(\sqrt{-6})$ and
$|a(n)|\le30n^3$ for $\Qq(\sqrt{-15})$, $\Qq(\sqrt{-21})$ (explicit
representation-count boxes); geometric Mellin tails; closed
$\coth/\tanh$ lattice rows with the \texttt{tail\_row\_iv} bounds;
Euler--Maclaurin with Hurwitz tails for the Dirichlet values; closed
incomplete gamma at $s\in\{1,2,3\}$ and a squeezing continued fraction
for $E_1$.

\subsection{External inputs of the \texttt{cert0} chain}\label{app:cert0}

The only non-self-contained inputs of the $s_4$-value locks are
Shimura's CM theorem and the Shimura reciprocity law (integrality of
the CM values of the eta-quotient modular unit $s_4$, and completeness
of the Galois orbits). The locked minimal polynomials are:
$\Qq(\sqrt2)$ pairs (Theorems C, D and their conjugates F, G); the
$\Qq(\sqrt2,\sqrt3)$ quartic orbit (Theorem E, four CM/level points);
$X^2+192303X+1185921$ of discriminant $5\cdot85995^2$ (Theorem I);
$X^2+1787904X+84934656$ of discriminant $3\cdot1032192^2$ (Theorems J,
K). Individual-root locks use root separations from $305$ up to
$1.8\times10^6$ against lock widths $\le10^{-45}$.

% ============================================================
\section*{Statements and Declarations}
\paragraph{Funding.} The author did not receive support from any
organization for the submitted work.
\paragraph{Competing Interests.} The author has no relevant financial or
non-financial interests to disclose.
\paragraph{Data Availability.} All certification scripts, frozen
certificates, and run logs are available at
\url{https://github.com/huiminZheng-collab/samart-mahler}, archived at
\url{https://doi.org/10.5281/zenodo.21711884}.
\paragraph{Use of AI tools.} Documented in the Declaration on the use of
AI tools on the title page.

\end{document}